\documentclass[11pt,oneside,reqno]{amsart}

\calclayout

\usepackage[utf8]{inputenc}
\usepackage[T1]{fontenc}
\usepackage[english]{babel}
\usepackage[dvipsnames]{xcolor}
\usepackage{amssymb , amsmath, amsthm}
\usepackage{mathtools}
\mathtoolsset{showonlyrefs}
\usepackage{hyperref}
\usepackage{appendix}

\allowdisplaybreaks

\newcommand{\R}{\mathbb{R}}
\newcommand{\N}{\mathbb{N}}

\newcommand{\eps}{\varepsilon}
\newcommand{\dm}{\mathrm{dim}}
\newcommand{\supp}{\mathrm{supp}}
\renewcommand{\geq}{\ge}
\renewcommand{\leq}{\le}

\newtheorem{theorem}{Theorem}[section]
\newtheorem{coro}[theorem]{Corollary}
\newtheorem{proposition}[theorem]{Proposition}
\newtheorem{definition}[theorem]{Definition}
\newtheorem{lemma}[theorem]{Lemma}

\newtheorem{rem}[theorem]{Remark}

\newcommand{\ukm}{u_k^{(m)}}
\newcommand{\utkm}{\tilde{u}_k^{(m)}}
    \newcommand{\sikm}{\sigma_k^{(m)}}
    \newcommand{\phkm}{\phi_k^{(m)}}
\newcommand{\de}{\, \mathrm{d}}
\newcommand{\del}{\partial}

\DeclareMathOperator{\Vol}{Vol}
\newcommand{\wk}{\rightharpoonup}
\newcommand{\wks}{\xrightharpoonup{*}}

\DeclareMathOperator{\dist}{dist}

\DeclareMathOperator{\spn}{span}

\newcommand{\dV}{\de V_\Omega}
\newcommand{\dA}{\de A_{\del \Omega}}

\renewcommand{\div}{\operatorname{div}}

\newcommand{\abs}[1]{\left\lvert #1 \right\rvert}
\newcommand{\set}[1]{\left\{ #1 \right\}}

\newcommand{\bigo}[2][]{O_{#1}\left( #2 \right)}
\newcommand{\smallo}[2][]{o_{#1}\left( #2 \right)}

\renewcommand{\le}{\leqslant}
\renewcommand{\ge}{\geqslant}
\renewcommand{\hat}{\widehat}
\renewcommand{\tilde}{\widetilde}
\renewcommand{\epsilon}{\varepsilon}
\renewcommand{\phi}{\varphi}

\newcommand{\CR}{\mathcal{R}}

\newcommand{\BS}{\mathbf{S}}
\newcommand{\BT}{\mathbf{T}}

\newcommand{\bo}\boldsymbol{}

\newcommand{\RC}{\mathrm{C}}

\newcommand{\RL}{\mathrm{L}}

\newcommand{\RW}{\mathrm{W}}

\title[Sharp upper bounds for Steklov eigenvalues]{Sharp upper bounds for Steklov eigenvalues on \\ warped products 
}
\author[Brisson]{Jade Brisson}
\address{D\'epartement de math\'ematiques et de statistique, Universit\'e de Moncton, Pavillon R\'emi-Rossignol, Nouveau-Brunswick, E1A 7R1, Canada
}
\email{jade.brisson@umoncton.ca}

\author[Colbois]{Bruno Colbois}
\address{Institut de Math\'ematiques, Universit\'e de Neuch\^atel, Rue Emile-Argand 11, 2000 Neuch\^atel, Suisse}
\email{bruno.colbois@unine.ch}

\author[Girouard]{Alexandre Girouard}\address{{D\'epartement de math\'ematiques et de statistique}, {Pavillon Alexandre-Vachon}, {Universit\'e Laval}, {Qu\'ebec}, {G1V 0A6}, {Canada}}
\email{alexandre.girouard@mat.ulaval.ca}

\author[Gittins]{Katie Gittins}
\address{Department of Mathematical Sciences, Durham University, Mathematical Sciences and Computer Science Building, Upper Mountjoy Campus, Stockton Road, Durham, DH1 3LE, United Kingdom.}
\email{katie.gittins@durham.ac.uk}

\author[Lagac\'e-Taylor]{Jean Lagac\'e-Taylor}
\address{Department of Mathematics, King's College London, The Strand, London, WC2R 2LS, United Kingdom}
\email{jean.lagace-taylor@kcl.ac.uk}

\subjclass[2020]{35P15, 58C40}
\keywords{Steklov eigenvalues, upper bounds, warped products, spectral geometry, homogenisation, approximation}

\date{\today}

\begin{document}
\begin{abstract}
  We obtain sharp isoperimetric-type upper bounds for Steklov eigenvalues on warped products whose bases are compact Riemannian manifolds with boundary and whose fibres are closed Riemannian manifolds.
  These bounds involve the volume of the warped product and of its boundary, as well as the eigenvalues of the Laplace operator on the fibre  and the Steklov eigenvalues of the base. 
  After separation of variables, these bounds are obtained by comparison with auxiliary Helmholtz–Steklov problems on the base involving weighted Laplace operators naturally induced by the warping function.
  We also study the effect of various natural geometric constraints, such as 
  the warping function being identically equal to one on the boundary. The proofs
  make use of various spectral approximation methods, including homogenisation on 
  manifolds when the fibres are one-dimensional.
\end{abstract}

\maketitle

\section{Introduction}
 Let $(\Omega,g_\Omega)$ be a compact, connected Riemannian manifold of dimension
$d \ge 1$, with smooth boundary $\del \Omega$ and let $(\Sigma, g_\Sigma)$ be a
closed, connected Riemannian manifold of dimension $n \ge 1$. Given a smooth
positive function $0 < h \in \RC^\infty(\Omega)$, consider the warped product
$M_h := \Omega \times_h \Sigma$ of dimension $N = d + n$. That is, the product manifold $M = \Omega
\times \Sigma$ is equipped with the Riemannian metric $g_h$ defined by
\begin{equation}
    g_h(x,q) := g_{\Omega}(x) + h^2(x) g_\Sigma(q).
\end{equation}
On $M_h$, we consider the Steklov eigenvalue problem, which consists in finding
all $\sigma \in \R$ and nonzero $u \in \RC^\infty(M)$ that satisfy
\begin{equation}
    \label{eqdef:Steklov}
    \begin{cases}
        \Delta u = 0 & \text{in } M_h, \\
        \del_\nu u = \sigma u & \text{on } \del M_h.
    \end{cases}
\end{equation}
Here, $\Delta = \Delta_{g_h}$ is the Laplacian associated with the warped
metric $g_h$, and $\del_\nu u$ is the outward-pointing normal derivative of the
function $u$. The Steklov eigenvalues form an unbounded, non-decreasing sequence,
\begin{equation}
    0 = \sigma_0(M_h) < \sigma_1(M_h) \le \sigma_2(M_h) \le \dotso \nearrow
    \infty.
\end{equation}
See~\cite{LeMaPo2023} for an introduction and \cite{CoGiGoSh2024,GiPo2017} for surveys of the recent literature on the spectral geometry of the Steklov problem.

In order to obtain upper bounds for the Steklov eigenvalues of Riemannian
manifolds, it is necessary to impose geometric constraints (see \cite{CiGi2018, CoElGi2019, FrSc2011}). In \cite{BrCoGi2024, CoGiGi2019,CoVe2021,Sel25,Tsc24},
upper bounds for Steklov eigenvalues are obtained for manifolds with metrics of
revolution type. This class is too restrictive for the strongest approximation
results that we want to prove. However, metrics of revolution are warped metrics in
the special case where $\Omega$ is an interval, and where $h$ may be allowed to
vanish at one or both ends of that interval. It is therefore natural to seek
upper bounds and approximation results in the class of warped products. Various
upper bounds under restrictions on the class of warped metrics can be found in~\cite{DaHeNi2021,DaKaNi2021}  where the base is an interval, \cite{Xi2021,Xi2022} for manifolds diffeomorphic to a ball, and \cite{GiPoly2024} for general warped products.

\subsection{Main results}  
Our first result is an isoperimetric-type upper bound involving the eigenvalues of the Laplace operator on the fibre $\Sigma$. We list them as
$$0=\lambda_0<\lambda_1\leq\lambda_2\leq\cdots\to+\infty.$$
\begin{theorem}\label{thm:isoptype}
    Let $M_h=\Omega\times_h\Sigma$ be a warped product of dimension $N \geq 3$ with fibre $\Sigma$ of dimension $n\geq 2$. Then,
    \begin{gather}\label{ineq:isoptype}
    \sigma_k(M_h)|\partial M_h||M_h|^{\frac{2-n}{n}}< (|\Omega||\Sigma|)^{2/n}\lambda_k.
    \end{gather}
    Moreover, for the constant warping function $h=R>0$, the following holds:
    \begin{gather}\label{limitcaseisop}
    \lim_{R\to\infty}\sigma_k(M_R)|\partial M_R||M_R|^{\frac{2-n}{n}}=(|\Omega||\Sigma|)^{2/n}\lambda_k.
    \end{gather}
\end{theorem}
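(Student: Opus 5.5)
Test functions pulled back from the fibre give the upper bound, and exact separation of variables for constant warping gives the limit. Let $\phi_0,\dots,\phi_k$ be $L^2(\Sigma)$-orthonormal eigenfunctions of $\Delta_\Sigma$ with eigenvalues $\lambda_0,\dots,\lambda_k$, and take the test functions $u(x,q)=\phi(q)$ with $\phi\in E:=\spn\{\phi_0,\dots,\phi_k\}$.

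\textbf{Setting up the Rayleigh quotient.} For the warped metric, $\dV_{g_h}=h^n\,\dV_\Omega\,\de V_\Sigma$ and $|\nabla u|^2_{g_h}=h^{-2}|\nabla_\Sigma\phi|^2$. The boundary is $\del M_h=\del\Omega\times\Sigma$ with area element $h^n\,\dA\,\de V_\Sigma$. This gives
\begin{equation}
R(u)=\frac{\int_\Omega h^{n-2}\,\dV\int_\Sigma|\nabla\phi|^2}{\int_{\del\Omega}h^n\,\dA\int_\Sigma\phi^2}\le \lambda_k\frac{\int_\Omega h^{n-2}}{\int_{\del\Omega}h^n}.
\end{equation}
The space $E$ is $(k+1)$-dimensional in $L^2(\del M_h)$, so the min-max principle yields
\begin{equation}
\sigma_k(M_h)\le \lambda_k\frac{\int_\Omega h^{n-2}}{\int_{\del\Omega}h^n}.
\end{equation}

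\textbf{Hölder step.} Since $|\del M_h|=|\Sigma|\int_{\del\Omega}h^n$ and $|M_h|=|\Sigma|\int_\Omega h^n$, multiplying by these gives
\begin{equation}
\sigma_k|\del M_h||M_h|^{\frac{2-n}{n}}\le\lambda_k|\Sigma|^{2/n}\Bigl(\int_\Omega h^{n-2}\Bigr)\Bigl(\int_\Omega h^n\Bigr)^{\frac{2-n}{n}}.
\end{equation}
Hölder's inequality with exponents $\frac{n}{n-2}$ and $\frac n2$ gives $\int_\Omega h^{n-2}\le(\int_\Omega h^n)^{\frac{n-2}{n}}|\Omega|^{2/n}$, so the right-hand side is at most $\lambda_k(|\Omega||\Sigma|)^{2/n}$. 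For $n=2$ there is no Hölder step and the factor $\int_\Omega h^{0}=|\Omega|$ appears directly. This proves the non-strict form of \eqref{ineq:isoptype}.

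\textbf{Strictness.} This is the main obstacle. I would argue that equality in min-max forces the whole of $E$ to consist of Steklov eigenfunctions for $\sigma_k$, or at least forces the maximizer $\phi$ to be one. Being one requires $\Delta u=0$, but $\Delta_{g_h}\phi=h^{-2}\Delta_\Sigma\phi\neq0$ unless $\phi$ is constant. If $\lambda_k>0$, the maximizer of $R$ on $E$ has a non-constant component, contradicting harmonicity. The standard statement is that equality in min-max is attained by some $u$ in the test space that is an eigenfunction; I would use that the maximum of $R$ on $E$ is attained, equals $\sigma_k$, and that such a maximizer must then be an eigenfunction. If $\lambda_k=0$, i.e.\ $k=0$, the statement is trivial or degenerate ($0<0$ fails), so presumably $k\ge1$ is intended. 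Thus the inequality is strict.

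\textbf{Limit for $h=R$.} Now the ratio equals $\lambda_k|\Omega|/(R^2|\del\Omega|)\cdot(\ldots)$ and Hölder is an equality, so it suffices to show $R^2\sigma_k(M_R)\to\lambda_k|\Omega|/|\del\Omega|$. By separation of variables, $u=a(x)\phi_j(q)$ with
\begin{equation}
\Delta_\Omega a+\lambda_jR^{-2}a=0 \text{ in } \Omega, \qquad \del_\nu a=\sigma a \text{ on } \del\Omega.
\end{equation}
Put $\mu=\lambda_j/R^2\to0$. The smallest eigenvalue of this Helmholtz–Steklov problem satisfies $\sigma^{(j)}_0(\mu)=\mu|\Omega|/|\del\Omega|+o(\mu)$; the constant test function gives the upper bound, and perturbation theory gives the matching lower bound. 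The higher branches stay near $\sigma_i(\Omega)>0$. Hence, for $R$ large, the lowest $k+1$ Steklov eigenvalues of $M_R$ are $\sigma_0^{(j)}(\lambda_j/R^2)$ for $j=0,\dots,k$, and therefore $R^2\sigma_k(M_R)\to\lambda_k|\Omega|/|\del\Omega|$, which gives \eqref{limitcaseisop}.
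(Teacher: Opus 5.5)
Your proposal is correct and is essentially the paper's argument. Your fibre test functions $\phi(q)$ are exactly the paper's trial functions $1\cdot\phi_j$ behind $\sigma_k(M_h)\le\sigma_{\lambda_k,0}(h)\le\lambda_k\int_\Omega h^{n-2}/\int_{\partial\Omega}h^n$, the H\"older step with exponent $n/(n-2)$ is the same, and the limit is obtained as in the paper. That is, $\sigma_k(M_R)$ equals the bottom Helmholtz--Steklov eigenvalue at $\mu=\lambda_k/R^2$ for large $R$ (all branches with $\ell\ge1$ are bounded below by $\sigma_1(\Omega)$ by monotonicity in $\mu$), together with $\frac{d}{d\mu}\sigma_{\mu,0}\big|_{\mu=0}=|\Omega|/|\partial\Omega|$.

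The one step to tighten is strictness. An arbitrary maximiser of the Rayleigh quotient on $E$ need not be an eigenfunction; only some element of $E$ must be, for instance one whose trace is orthogonal to the first $k$ eigenfunctions. This weaker fact still suffices, because no nonconstant element of $E$ is harmonic and constants have quotient $0<\sigma_k$. The paper sidesteps the issue by noting that equality in $\sigma_{\lambda_k,0}(h)\le\CR_{\lambda_k,h}(1)$ would force the constant to be an eigenfunction of the auxiliary problem, which is impossible since $\lambda_k h^{n-2}\neq0$.
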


Theorem \ref{thm:isoptype} motivates some natural questions.
For instance, for each fixed index $k$, how large can $\sigma_k(M_h)$ be among all warping functions $h\in \RC^\infty(\Omega)$ such that the boundary $\partial M_h$ has prescribed measure? Or even such that $h$ is constant on $\partial\Omega$? We succeed in solving this problem  for all warped products, with results depending on the dimension $n$ of the fibre $\Sigma$. Indeed, for fibres of low dimension $n=1,2$ the behaviour is peculiar. The case $n=2$ is the only one where the Steklov eigenvalues enjoy a finite upper bound, and we provide the sharp value of this bound in Corollary \ref{coro:initialproject}. In all other dimensions there are no upper bounds independent of the warping function $h$. However, where $h$ remains bounded, the geometries almost realising the supremum for $\sigma_k(M_h)$ have the corresponding eigenfunction oscillating in the base when the fibres have dimension $n = 1$, and in the fibre when the fibres have dimension $n \ge 3$, independently of the dimension $d \ge 2$ of the base, as seen in Theorems \ref{thm:MainIntro} and \ref{thm:onedfibre}.  We also see from their respective proofs that the geometry achieving large eigenvalues is significantly more singular for fibres of dimension $n =1$  than in the case where $n\geq 3$.

Theorem \ref{thm:isoptype} follows from Theorems \ref{thm:upperbound} and \ref{thm:MainIntro}, which provide more general inequalities.

\begin{theorem}\label{thm:upperbound}
Let $M_h=\Omega\times_h\Sigma$ be a warped product of dimension $N \geq 3$ with fibre $\Sigma$ of dimension $n\geq 1$. For each index $k\geq 1$, and each $0<h\in \RC^\infty(\Omega)$, we have that
 \begin{equation} \label{ineq:introgeneral}
\sigma_{k}(M_h)<\lambda_k\frac{\int_{\Omega}h^{n-2}\,dV_\Omega}{\int_{\partial\Omega}h^n\,dA_{\partial\Omega}}.
\end{equation}
Moreover, if the fibre $\Sigma$ has dimension $n \ge 2$, then, for $R \ge 1$, if $h \leq R$ on $\Omega$ and $\int_{\partial \Omega} h^n \, dA_{\partial \Omega} = |\partial \Omega |$, then for each index $k \geq 1$,
\begin{equation}\label{ineq:introupbd}
    \sigma_{k}(M_h)< R^{n-2}\frac{\lambda_k|\Omega|}{|\partial \Omega|}.
\end{equation}
\end{theorem}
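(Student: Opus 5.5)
The plan is to use the variational (min–max) characterisation of $\sigma_k(M_h)$ with test functions that separate variables and depend only on the fibre: $u(x,q)=\phi(q)$ with $\phi$ in the span of the first $k+1$ Laplace eigenfunctions $\phi_0,\dots,\phi_k$ of $(\Sigma,g_\Sigma)$. These span a $(k+1)$-dimensional space $E$ of functions on $M$.

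We first compute the Rayleigh quotient for the warped metric. The volume form of $g_h$ is $h^n\,dV_\Omega\,dV_\Sigma$, and the boundary is $\partial M_h=\partial\Omega\times\Sigma$ with area form $h^n\,dA_{\partial\Omega}\,dV_\Sigma$. For $u=\phi(q)$ the gradient lies in the fibre directions and satisfies $|\nabla_{g_h}u|^2=h^{-2}|\nabla_{g_\Sigma}\phi|^2$. Hence
\begin{equation}
R(u)=\frac{\int_M|\nabla u|^2}{\int_{\partial M}u^2}=\frac{\int_\Omega h^{n-2}\,dV_\Omega\,\int_\Sigma|\nabla\phi|^2}{\int_{\partial\Omega}h^n\,dA_{\partial\Omega}\,\int_\Sigma\phi^2}\le \lambda_k\frac{\int_{\Omega}h^{n-2}\,dV_\Omega}{\int_{\partial\Omega}h^n\,dA_{\partial\Omega}}
\end{equation}
for every $\phi\in E$. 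Min–max then gives the non-strict version of \eqref{ineq:introgeneral}.

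The main obstacle is strictness. Suppose equality held. Then the maximum of $R$ over $E$ would equal $\sigma_k$. A standard min–max argument then produces some $u\in E$ attaining $R(u)=\sigma_k$ that is $L^2(\partial M)$-orthogonal to the first $k$ eigenfunctions. Such a $u$ is a genuine Steklov eigenfunction, so $u$ is harmonic in $M_h$ with $\partial_\nu u=\sigma_k u$ on $\partial M_h$. But $u$ depends only on $q$, so $\partial_\nu u=0$ on the boundary, which forces $u|_{\partial M}=0$ because $\sigma_k>0$. Since $u|_{\partial M}(x,q)=\phi(q)$, this gives $\phi\equiv0$, a contradiction.

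For \eqref{ineq:introupbd}, the normalisation $\int_{\partial\Omega}h^n=|\partial\Omega|$ together with $n\ge2$ and $h\le R$ gives $h^{n-2}\le R^{n-2}$. Therefore $\int_\Omega h^{n-2}\le R^{n-2}|\Omega|$, and substituting this into the strict inequality \eqref{ineq:introgeneral} yields \eqref{ineq:introupbd}.
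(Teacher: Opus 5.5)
Your proof is correct and rests on the same trial functions as the paper. The paper bounds $\sigma_k(M_h)\le\sigma_{\lambda_k,0}(h)$ through the separation of variables of Proposition~\ref{prop:separation} and then inserts $f\equiv 1$ into $\CR_{\lambda_k,h}$. Unwound, this amounts to testing with $1\cdot\phi_j(q)$ for $j\le k$, which is exactly your space $E$. The differences lie in the framework and in the strictness argument.

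You work directly with the min--max characterisation on $M_h$, so you never need the completeness of the separated eigenfunctions. The paper's strictness argument is shorter because it sits at the bottom of the auxiliary spectrum. There, equality would force the minimiser $f\equiv1$ to be an eigenfunction, which is contradicted in the interior by $L_h1+\lambda_kh^{n-2}=\lambda_kh^{n-2}\neq0$.

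Your equality case sits at the min--max level $\sigma_k$ rather than at a minimum. You therefore need the extra step of extracting from $E$ a function orthogonal to the first $k$ eigenfunctions that attains $\sigma_k$ and is hence an eigenfunction. This step is standard, and you carry it out correctly. You then contradict the boundary condition rather than an interior equation: fibre-only functions have $\partial_\nu u=0$ because the normal to $\partial\Omega\times\Sigma$ is horizontal. This is equally valid.

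The deduction of \eqref{ineq:introupbd} from $h^{n-2}\le R^{n-2}$ is identical to the paper's.
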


The proof of inequality \eqref{ineq:introgeneral} uses separation of variables, which gives a decomposition of the Steklov eigenvalues of $M_h$ as a union of spectra of auxiliary Steklov problems for diffusion operators $L_h f = -\div h^n \nabla f+\lambda_jh^{n-2}f$ on $\Omega$. Once this is available, the proof of inequality \eqref{ineq:introgeneral} is a straightforward use of constant trial functions in the variational characterisation of eigenvalues. The same strategy was used to obtain an upper bound for the first eigenvalue of the aforementioned diffusion operator in \cite[Theorem 2]{Tsu80}. Further properties of this operator were obtained in \cite{Ej79}, where Laplace-isospectral non-isometric closed warped product manifolds were constructed.

In view of Theorems \ref{thm:isoptype} and \ref{thm:upperbound}, we study bounds on the Steklov eigenvalues of warped products given various geometric constraints. We first investigate how large $\sigma_k(M_h)$ can be among all warping functions $h\in \RC^\infty(\Omega)$ such that $M_h$ has a prescribed measure.
    \begin{theorem}\label{thm:boundedinteriorvolume}
        Let $M_h=\Omega\times_h\Sigma$ be a warped product. Then,
$$\sup\{\sigma_k(M_h)\,:\,|M_h|=1\}=+\infty.$$
    \end{theorem}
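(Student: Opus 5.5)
The plan is to make the boundary of $M_h$ very small while keeping $|M_h| = |\Sigma|\int_\Omega h^n\,dV_\Omega$ equal to $1$. Concretely, fix a small collar width $\delta>0$ and a much smaller $\varepsilon>0$. Take $h = \varepsilon$ on the collar $C_\delta=\{x\in\Omega:\dist(x,\partial\Omega)<\delta\}$, and let $h$ rise smoothly and monotonically to a constant $H$ on $\Omega\setminus C_{2\delta}$. Here $H=H(\varepsilon,\delta)$ is chosen so that $|M_h|=1$; it stays bounded above and below as $\varepsilon,\delta\to0$, roughly $H^n|\Omega||\Sigma|\approx 1$. Note that $h\ge\varepsilon$ everywhere.

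By the separation of variables described after Theorem \ref{thm:upperbound}, the spectrum of $M_h$ is the union over $j\ge0$ of the spectra of the auxiliary problems with Rayleigh quotients
\begin{equation}
  R_j(f)=\frac{\int_\Omega \big(h^n|\nabla f|^2+\lambda_j h^{n-2}f^2\big)\,dV_\Omega}{\int_{\partial\Omega}h^n f^2\,dA_{\partial\Omega}}
  =\frac{\int_\Omega \big(h^n|\nabla f|^2+\lambda_j h^{n-2}f^2\big)\,dV_\Omega}{\varepsilon^n\int_{\partial\Omega} f^2\,dA_{\partial\Omega}}.
\end{equation}
It therefore suffices to show two things as $\delta\to0$ and then $\varepsilon/\delta\to0$. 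First, $\inf R_j\to\infty$ uniformly for $j\ge1$. Second, the $(k+1)$-st eigenvalue of $R_0$ tends to $\infty$. Then only finitely many eigenvalues, fewer than $k+1$, stay bounded, and $\sigma_k(M_h)\to\infty$.

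\emph{Modes $j\ge1$.} Using $h\ge\varepsilon$ and restricting the numerator to $C_\delta$ gives
\begin{equation}
R_j(f)\ge \frac{\int_{C_\delta}|\nabla f|^2+\lambda_1\varepsilon^{-2}f^2}{\int_{\partial\Omega}f^2}.
\end{equation}
The trace inequality on a collar, $\int_{\partial\Omega}f^2\le C\big(\delta^{-1}\int_{C_\delta}f^2+\delta\int_{C_\delta}|\nabla f|^2\big)$, then yields $R_j\ge c\min(\delta^{-1},\lambda_1\delta\varepsilon^{-2})$. This tends to infinity once $\varepsilon\ll\delta$.

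\emph{Mode $j=0$.} Here
\begin{equation}
R_0(f)\ge \frac{\int_{C_\delta}|\nabla f|^2+(H/\varepsilon)^n\int_{\Omega\setminus C_{2\delta}}|\nabla f|^2}{\int_{\partial\Omega}f^2}.
\end{equation}
The huge weight $(H/\varepsilon)^n$ forces functions with bounded quotient to be nearly constant on the inner region. Eigenvalues of $R_0$ beyond the first should then be controlled from below by those of a mixed problem on the collar: Steklov on $\partial\Omega$, and "equal to a single constant" on the inner side (Dirichlet up to a one-dimensional space). Those eigenvalues are of order $\delta^{-1}$.

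To make this rigorous, I would use the min–max principle. Given any $(k+1)$-dimensional space of trial functions, subtract from each function its mean over the inner region. This leaves a $k$-dimensional subspace, and for each $f$ in it I estimate the inner-region part via Poincaré with the factor $(\varepsilon/H)^n$. The transition layer $C_{2\delta}\setminus C_\delta$, where $\varepsilon\le h\le H$, only helps, since its weight is at least $\varepsilon^n$.

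The main obstacle is this $j=0$ step. One needs a clean quantitative lower bound for the $k$-th mixed eigenvalue on the thin collar. A one-dimensional fibrewise argument along normal geodesics, of the form $|f(x)-f(x')|^2\le\delta\int|\partial_t f|^2$ along each normal segment of length $\delta$, should give the bound $\gtrsim\delta^{-1}$. In addition, the error from approximate constancy on the inner region has to be absorbed, and this requires $\varepsilon$ to be small relative to $\delta$ and to the Poincaré constant of $\Omega\setminus C_{2\delta}$.
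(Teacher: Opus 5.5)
Your argument is correct, but it is not the route the paper takes. The paper uses $h_m=\bigl((r+(1-r)\chi_m)/\int_\Omega(r+(1-r)\chi_m)\,dV_\Omega\bigr)^{1/n}$. This has a fixed small boundary value $\approx (r/|\Omega|)^{1/n}$, is essentially the constant $|\Omega|^{-1/n}$ inside, and makes its transition on a collar of width $m^{-1}\to 0$. Proposition~\ref{prop:continuitybis} gives continuity of $\sigma_{\lambda,k}$ under $\RL^2$-convergence of uniformly bounded warping functions with converging boundary values. It shows that the eigenvalues converge to $r^{-1}$ times those of a fixed constant-warping model, and one then lets $r\to 0$. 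There the blow-up is a pure rescaling of the boundary weight and the limit is exact. The proof is two lines, given a continuity result the paper needs anyway for Theorem~\ref{thm:MainIntro}.

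You instead keep a collar of width $\delta$ on which $h=\varepsilon\ll\delta$, and you prove explicit min--max lower bounds. For $j\ge 1$ the scaled collar trace inequality gives $\sigma_{\lambda_j,0}(h)\gtrsim\min\{\delta^{-1},\lambda_1\delta\varepsilon^{-2}\}$. For $j=0$ you get $\sigma_{0,1}(h)\gtrsim\min\{(H/\varepsilon)^n,\delta^{-1}\}$. This is self-contained and quantitative, and it needs no convergence of eigenfunctions, but its rate is capped by $\delta^{-1}$.

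When you write out the $j=0$ step, check three things.

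\begin{enumerate}
\item Keep the transition layer with weight $(h/\varepsilon)^n\ge 1$, as your remark says; this is essential, not a bonus. The displayed bound that drops $C_{2\delta}\setminus C_\delta$ decouples the collar from the inner region. Its collar eigenvalues are then Steklov--Neumann ones of size $O(\delta)$, so by itself it is useless.

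\item Do not subtract means, since that changes the denominator. Instead intersect the trial space with the hyperplane $\{\int_I f\,dV_\Omega=0\}$, where $I=\Omega\setminus C_{2\delta}$. For such $f$, the fibrewise estimate across all of $C_{2\delta}$ gives
\[
\int_{\partial\Omega}f^2\lesssim\int_{\partial I}f^2+\delta\int_{C_{2\delta}}|\nabla f|^2 .
\]
Trace and Poincar\'e on $I$, with constants uniform for small $\delta$, give $\int_{\partial I}f^2\lesssim\int_I|\nabla f|^2$. This yields $R_0(f)\gtrsim\min\{(H/\varepsilon)^n,\delta^{-1}\}$.

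\item Only the second eigenvalue of each family matters, since
\[
\sigma_k(M_h)\ge\sigma_1(M_h)=\min\{\sigma_{0,1}(h),\sigma_{\lambda_1,0}(h)\}.
\]
\end{enumerate}
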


The next two theorems indicate the precise way in which the eigenvalue functional $\sigma_k(M_h)$ is unbounded under the constraint that the warping function $h=1$ on $\partial \Omega$. The behaviour for one-dimensional fibres differs significantly from that in dimensions three and higher, which is to be expected since the numerator in \eqref{ineq:introgeneral} behaves completely differently in either of those situations. Nevertheless, the starting point for all the proofs is a comparison with cylindrical models, namely products with constant warping function.

\begin{theorem}\label{thm:MainIntro}
Let $M_h=\Omega\times_h\Sigma$ be a warped product of dimension $N \geq 3$ with fibre $\Sigma$ of dimension $n\geq 2$. Let $R\geq 1$.  
For each index $k\geq 1$,
\begin{equation}\label{intro:doubleinequality}
  \sup \{\sigma_k(M_h) : h \le R, \ h =1  \text{ on } \del \Omega\}  = R^n\sigma_k(M_R) < R^{n-2}\frac{\lambda_k\vert \Omega\vert}{\vert \partial \Omega \vert}.
\end{equation}
Moreover,
\begin{equation} \label{Case n>2}
\lim_{R\to \infty} R^2\sigma_k(M_R)=\frac{\lambda_k\vert \Omega\vert}{\vert \partial \Omega \vert},
\end{equation}
so that for $n > 2$, all expressions in \eqref{intro:doubleinequality} go to $\infty$ as $R \to \infty$.
\end{theorem}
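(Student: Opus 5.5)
Throughout we use the separation of variables recalled after Theorem~\ref{thm:upperbound}. The Steklov spectrum of $M_h$ is the union over $j\ge 0$ of the spectra of the problems $-\div(h^n\nabla f)+\lambda_j h^{n-2}f=0$ in $\Omega$, $h^n\partial_\nu f=\sigma h^n f$ on $\partial\Omega$. The associated Rayleigh quotient is
\begin{equation}
R_{h,j}(f)=\frac{\int_\Omega \big(h^n|\nabla f|^2+\lambda_j h^{n-2}f^2\big)\,dV_\Omega}{\int_{\partial\Omega}h^n f^2\,dA_{\partial\Omega}}.
\end{equation}
When $h=1$ on $\partial\Omega$, the denominator is $\int_{\partial\Omega}f^2$ and does not depend on $h$.

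\textbf{Upper bound, step 1 (monotonicity in $h$).} For $n\ge 2$ both weights $h^n$ and $h^{n-2}$ are nondecreasing in $h$. Hence if $h\le R$ and $h=1$ on $\partial\Omega$, then $R_{h,j}(f)\le R_{\tilde R,j}(f)$. Here $\tilde R$ denotes the constant-$R$ interior weights combined with boundary weight $1$. Taking min–max over joint test spaces built from products $f(x)\phi_j(q)$ gives $\sigma_k(M_h)\le\sigma_k(\tilde M)$. The comparison model is $\Omega\times\Sigma$ with weights $R^n$, $R^{n-2}$ in the interior and boundary weight $1$. A direct computation shows that its quotient equals $R^n$ times the quotient of the cylinder $M_R$, since the boundary weight of $M_R$ is $R^n$. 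This gives $\sigma_k(M_h)\le R^n\sigma_k(M_R)$.

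\textbf{Upper bound, step 2 (strict inequality).} The strict inequality $R^n\sigma_k(M_R)<R^{n-2}\lambda_k|\Omega|/|\partial\Omega|$ is inequality \eqref{ineq:introgeneral} applied with $h\equiv R$: it reads $\sigma_k(M_R)<\lambda_k R^{n-2}|\Omega|/(R^n|\partial\Omega|)$.

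\textbf{Sharpness (main step).} We must show that the supremum is attained in the limit. We take $h_\epsilon$ equal to $R$ outside an $\epsilon$-collar of $\partial\Omega$, rising smoothly from $1$ on $\partial\Omega$, with $h_\epsilon\le R$. We then need $\sigma_k(M_{h_\epsilon})\to R^n\sigma_k(M_R)$. The difficulty is that the eigenvalue counting mixes all $j$, so we need convergence of each relevant fibre mode, uniformly for the finitely many $j$ that matter; for large $j$, $\lambda_j$ dominates. Lower semicontinuity is the key direction. Take eigenfunctions $f_\epsilon$ of the $h_\epsilon$-problem with bounded quotient and normalised boundary $L^2$ norm. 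Since $h_\epsilon\ge 1$, they are bounded in $H^1(\Omega)$, so a subsequence converges weakly in $H^1$ and strongly in $L^2(\partial\Omega)$. Outside the collar the weights are exactly $R^n$, $R^{n-2}$, and the collar has vanishing volume, so lower semicontinuity of the energy on compact subsets yields $\liminf R_{h_\epsilon,j}\ge R_{\tilde R,j}(f)$. Orthogonality passes to the limit through the strong boundary convergence. Combined with the monotone bound of step 1, this gives convergence of the $k$-th eigenvalue of the combined problem, and hence equality of the supremum with $R^n\sigma_k(M_R)$.

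\textbf{Limit \eqref{Case n>2}.} We have $R^2\sigma_k(M_R)=$ the $k$-th eigenvalue of $-\Delta f+\lambda_j R^{-2}f=0$, $\partial_\nu f=\sigma R^2 f$. Rescaling, the eigenvalues $\mu=R^2\sigma$ come from the quotient
\begin{equation}
\frac{\int_\Omega R^2|\nabla f|^2+\lambda_j f^2}{\int_{\partial\Omega} f^2}.
\end{equation}
As $R\to\infty$, the gradient penalty forces near-constants. Nonconstant modes of each $j$ blow up, so the $k$-th eigenvalue is eventually the $j=k$ constant mode, with value tending to $\lambda_k|\Omega|/|\partial\Omega|$. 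The upper bound is given by constant trial functions. For the lower bound we use the splitting $f=\bar f+(f-\bar f)$ together with the trace and Poincaré inequalities, which give
\begin{equation}
\int_{\partial\Omega}(f-\bar f)^2\le C\|\nabla f\|^2.
\end{equation}
For $n>2$, multiplying by $R^{n-2}\to\infty$ then shows that all expressions in \eqref{intro:doubleinequality} diverge.
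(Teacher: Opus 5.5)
Your proof is correct, and its skeleton is the paper's. You use pointwise monotonicity of the Rayleigh quotient in $h$ (with $h=1$ on $\partial\Omega$) to get $\sigma_k(M_h)\le R^n\sigma_k(M_R)$, a constant trial function for strictness, and collar approximations of the constant $R$ for saturation. The two analytic steps are carried out differently, though. For saturation, the paper invokes Proposition~\ref{prop:continuitybis}, a two-sided continuity result for warping functions converging in $\RL^2$ with possibly varying boundary values; it shows that weak limits of eigenfunctions solve the limiting weak equation and then identifies indices by induction. You prove only the $\liminf$ inequality, via lower semicontinuity of the energy on compact subsets of the interior together with orthogonality in $\RL^2(\partial\Omega)$ passing to the limit, and take the matching upper bound for free from your first step. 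This is shorter because it exploits the one-sided structure here: $1\le h_\epsilon\le R$, with $h_\epsilon=R$ off a shrinking collar. The paper's proposition needs no monotone comparison, which is why it can be reused for Theorems~\ref{thm:boundedinteriorvolume} and~\ref{thm:onedfibre} and serve as a template for Proposition~\ref{prop:homogenisation}. The paper leaves the mode-mixing issue you flag implicit. Your fix works: $h_\epsilon\ge 1$ gives $\sigma_{\lambda_j,\ell}(h_\epsilon)\ge\sigma_{\lambda_j,\ell}(1)$, so only finitely many pairs $(j,\ell)$ lie below any fixed level, uniformly in $\epsilon$. For \eqref{Case n>2}, the paper computes $\frac{d}{d\mu}\sigma_{\mu,0}\big|_{\mu=0}=|\Omega|/|\partial\Omega|$ by a Feynman--Hellmann argument, which presupposes differentiability of the eigenpair in $\mu$. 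You give instead a direct two-sided estimate. For the normalised minimiser $f_R$ one has $\|\nabla f_R\|_{\RL^2}^2\lesssim R^{-2}$, so the trace--Poincar\'e bound forces $\bar f^{\,2}\to 1/|\partial\Omega|$, and $\int_\Omega f_R^2\ge\bar f^{\,2}|\Omega|$ when $\bar f$ is the interior mean. Your version is more elementary and gives an explicit rate; the paper's identifies the coefficient as a derivative and ties it to Robin-parameter formulas. One minor slip: with $\mu=R^2\sigma$ the boundary condition reads $\partial_\nu f=\mu R^{-2}f$, not $\sigma R^2 f$, although the quotient you then write is the correct one.
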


For fibres of dimension $1$, the saturated bound reads differently.
\begin{theorem}
\label{thm:onedfibre}
    Let $M_h = \Omega \times_h \Sigma$ be a warped product of dimension $N \geq 3$ with fibre $\Sigma$ of dimension $n=1$. Let $R \ge 1$. Then for each index $k \ge 1$,
    \begin{equation}
    \label{intro:ineqsteklovbase}
\sup\{\sigma_k(M_h)\,:\, h\le R, h=1\ \text{on} \  \partial \Omega\} =R \sigma_k(\Omega).
    \end{equation}
    In particular, this supremum goes to $\infty$ as $R \to \infty$.
\end{theorem}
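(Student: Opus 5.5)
Fix a warping function $h$ with $h \le R$ and $h = 1$ on $\del\Omega$. The plan is to prove the upper bound $\sigma_k(M_h) \le R\,\sigma_k(\Omega)$ by separation of variables and a comparison argument, and then to show that this bound is approached by a sequence of oscillating warping functions, via homogenisation.

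\textbf{Upper bound.} With $n=1$, separating variables on $\Sigma$ (a circle with eigenvalues $\lambda_j$) decomposes the Steklov spectrum of $M_h$ into the union over $j$ of the spectra of the auxiliary problems
\begin{equation}
-\div(h\nabla f)+\lambda_j h^{-1} f=0 \ \text{in } \Omega, \qquad h\,\del_\nu f=\sigma h f \ \text{on } \del\Omega,
\end{equation}
normalised with respect to the boundary measure $h\,dA_{\del\Omega}$. Since $h=1$ on $\del\Omega$, the Rayleigh quotient for mode $j$ is
\begin{equation}
\frac{\int_\Omega h|\nabla f|^2+\lambda_j h^{-1}f^2 \, dV_\Omega}{\int_{\del\Omega} f^2\, dA_{\del\Omega}}.
\end{equation}
Restricting to the $j=0$ modes already gives $k+1$ eigenvalues, so $\sigma_k(M_h)$ is at most the $k$-th eigenvalue of the weighted problem $-\div(h\nabla f)=0$, $\del_\nu f=\sigma f$. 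Because $h\le R$, the numerator of its Rayleigh quotient is at most $R\int_\Omega|\nabla f|^2$, and the min--max principle gives
\begin{equation}
\sigma_k(M_h)\le R\,\sigma_k(\Omega).
\end{equation}
The boundary condition $h=1$ on $\del\Omega$ is exactly what makes the boundary term unweighted, so the comparison is with the Steklov problem on $\Omega$ itself.

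\textbf{Sharpness.} Here we must find $h_\eps\le R$ with $h_\eps=1$ on $\del\Omega$ and $\sigma_k(M_{h_\eps})\to R\,\sigma_k(\Omega)$. Taking $h\approx R$ in the interior reproduces the cylinder $M_R$, where the $j\ge1$ modes cost $\lambda_j/R\to$ small, which is bad: those modes pull the eigenvalues down. The weight in front of $f^2$ is $h^{-1}$, while the weight in front of $|\nabla f|^2$ is $h$. We therefore want the effective (homogenised) conductivity to equal $R$ while the effective potential $\lambda_j\,\overline{h^{-1}}$ is large.

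To achieve this, we take $h_\eps$ equal to $R$ on a thin periodic network of walls (laminates in each coordinate direction, in charts) and very small, of size $\delta$, on the cells. The zero-potential problem should then homogenise to an effective operator $-\div(A\nabla f)$ with $A$ close to $R$. Meanwhile the average of $h^{-1}$, which is about $\delta^{-1}$ times the cell volume fraction, blows up. This forces every $j\ge1$ mode to have its first eigenvalue tend to infinity, so the bottom of the spectrum comes only from $j=0$. A boundary layer where $h=1$ near $\del\Omega$ is needed, and it must be thin.

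The \textbf{main obstacle} is the conflict in the $j=0$ problem between making the conductivity $h$ close to $R$ on a set of nearly full measure and making $h^{-1}$ large on average. My first attempt would be to choose $h=R$ except on tiny balls (volume fraction $\to0$) where $h=\delta\to0$, with $\delta$ tending to zero so fast that $\int h^{-1}$ over the holes still diverges. Perforated-domain homogenisation (holes of vanishing capacity relative to the period) then shows that the $j=0$ problem converges to $-R\Delta$. At the same time, the $j\ge1$ modes, which see a "strange term" potential of Cioranescu--Murat type, tend to infinity.

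Eigenvalue convergence would follow from Mosco or $\Gamma$-convergence of the quadratic forms together with compactness of the trace on $\del\Omega$, possibly invoking the homogenisation results referenced in the paper. Letting the boundary layer shrink then gives $\sigma_k(M_{h_\eps})\to R\sigma_k(\Omega)$. Finally, since $\sigma_k(\Omega)>0$ for $k\ge1$, the supremum goes to $\infty$ as $R\to\infty$.
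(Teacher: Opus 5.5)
Your upper bound is the paper's argument: keep only the $j=0$ family, bound $h\le R$ in the numerator, and use $h\equiv 1$ in the denominator. The genuine gap is in the sharpness part, in the mechanism meant to push the $j\ge 1$ families above $R\sigma_k(\Omega)$.

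The laminate idea fails at once. The homogenised conductivity is bounded above by the mean of $h$, which is roughly $R$ times the volume fraction of the walls, so it cannot approach $R$ with thin walls.

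In the tiny-ball construction, the claim that sending $\delta\to0$ until $\int h^{-1}$ over the holes diverges forces $\sigma_{\lambda_j,0}(h_\eps)\to\infty$ is false. The reason is that $h$ is small exactly where $h^{-1}$ is large, so a test function can switch itself off inside each hole cheaply.
\begin{itemize}
\item Let $f$ equal $g$ outside a hole $B_\rho$ and drop to $0$ across a layer of width $w=\delta/\sqrt{\lambda_j}$ just inside $\del B_\rho$. The energy spent in that hole is of order $\|g\|_\infty^2(\delta\rho^{d-1}w^{-1}+\lambda_j\delta^{-1}\rho^{d-1}w)\asymp\sqrt{\lambda_j}\,\rho^{d-1}\|g\|_\infty^2$, independently of $\delta$.
\item Summed over the $\asymp\eps^{-d}$ holes, this is $\lesssim\sqrt{\lambda_j}\,\eps^{-d}\rho^{d-1}$. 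That tends to $0$ in the regime you name, vanishing capacity density: $\eps^{-d}\rho^{d-2}\to0$ for $d\ge3$, and a fortiori for $d=2$.
\item The layer fits inside the hole automatically, since divergence of $\int h^{-1}$ over the holes forces $\delta\ll\eps^{-d}\rho^{d}\ll\rho$.
\item Taking $g\equiv1$ gives $\sigma_{\lambda_1,0}(h_\eps)\le\lambda_1|\Omega|/(R|\del\Omega|)+o(1)$. Hence $\limsup\sigma_1(M_{h_\eps})\le\lambda_1|\Omega|/(R|\del\Omega|)<R\sigma_1(\Omega)$ once $R$ is large, so the bound is not saturated.
\end{itemize}
A Cioranescu--Murat strange term does not rescue this. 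There is none when the capacity density vanishes, and at critical size it would be finite, not infinite. With $\delta$ that small, the $j\ge1$ modes can only blow up if the total surface area $\eps^{-d}\rho^{d-1}$ of the holes diverges, which is the opposite of the regime you state.

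The missing idea is the opposite balance of scales, which the paper builds into Proposition~\ref{prop:homogenisation}.
\begin{itemize}
\item It sets $h_m^{-1}=h^{-1}+B_m$, with $B_m\asymp m^{(\alpha-1)d}$ on balls of radius $m^{-\alpha}$ and $1<\alpha<d/(d-1)$. Then $\int h_m^{-1}$ stays bounded and $h_m^{-1}\,\dV\wks(h^{-1}+B)\,\dV$.
\item The hole radius $m^{-\alpha}$ is much smaller than $\inf h_m\asymp m^{(1-\alpha)d}$. The Poincar\'e inequality on the balls then gives $\|h_m^{-1/2}\phi_k^{(m)}\|_{\RL^2}\lesssim m^{\alpha(d-1)-d}\to0$ for the correctors, see \eqref{eq:phikrate}.
\item So eigenfunctions cannot escape the potential; asymptotically they are their harmonic extensions across the holes. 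The $j=0$ family converges to that of $h$, and the $j\ge1$ families converge to the problem with the \emph{finite} potential $\lambda_j(h^{-1}+B)$.
\item The paper makes $B$ a large constant via Lemma~\ref{lem:lowerBbound}, a trace-compactness argument. It handles the boundary layer by applying Proposition~\ref{prop:continuitybis} to $(1-\chi_m)h+\chi_mR$, and concludes with a diagonal sequence.
\end{itemize}
Your other ingredients (thin boundary layer, convergence of quadratic forms, compactness of the trace) fit this scheme. Without the correct relation between hole size and the depth of $h$, though, your central claim that the $j\ge1$ modes blow up is unsupported, and in the regime you describe it is false.
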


The bounds \eqref{intro:doubleinequality} and \eqref{intro:ineqsteklovbase}, together with the asymptotics given in \eqref{Case n>2} provide us directly with the next corollary. 
\begin{coro}
\label{coro:initialproject}
Let $M_h=\Omega\times_h\Sigma$ be a warped product of dimension $N \geq 3$ and fibre $\Sigma$ of dimension $n$. Then,
\begin{equation}
\sup\{\sigma_k(M_h)\,:\,h=1 \text{ on }\partial\Omega\}=
\begin{cases}
    \frac{\lambda_k\vert \Omega\vert}{\vert \partial \Omega \vert}&\text{ if } n=2,\\
    +\infty&\text{ if }n \ne 2.
\end{cases}
\end{equation}
\end{coro}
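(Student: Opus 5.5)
The plan is to write the supremum over all warping functions as an increasing union over $R\ge1$ and apply the earlier theorems for each fixed $R$. Every admissible $h$ with $h=1$ on $\partial\Omega$ is continuous on the compact manifold $\Omega$, hence bounded. So it belongs to the class $\{h\le R,\ h=1 \text{ on } \partial\Omega\}$ as soon as $R\ge\max(1,\max_\Omega h)$. These classes increase with $R$, so
\[
\sup\{\sigma_k(M_h): h=1 \text{ on } \partial\Omega\}=\lim_{R\to\infty}\sup\{\sigma_k(M_h): h\le R,\ h=1 \text{ on } \partial\Omega\}.
\]
The rest of the argument evaluates this limit separately according to the fibre dimension $n$.

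\emph{Case $n=1$.} By Theorem \ref{thm:onedfibre}, the inner supremum equals $R\sigma_k(\Omega)$. Since $\sigma_k(\Omega)>0$ for $k\ge1$, this tends to $+\infty$ as $R\to\infty$. (When $d=N-1\ge2$ the base is a genuine manifold with boundary, so its Steklov spectrum is defined and $\sigma_k(\Omega)>0$.)

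\emph{Case $n\ge2$.} By Theorem \ref{thm:MainIntro}, the inner supremum equals
\[
R^n\sigma_k(M_R)=R^{n-2}\cdot R^2\sigma_k(M_R).
\]
By \eqref{Case n>2}, $R^2\sigma_k(M_R)\to \lambda_k|\Omega|/|\partial\Omega|$. For $n=2$ the factor $R^{n-2}$ equals $1$, so the limit is exactly $\lambda_k|\Omega|/|\partial\Omega|$. For $n\ge3$, $R^{n-2}\to\infty$ while the other factor tends to a positive constant (since $\lambda_k>0$ for $k\ge1$), so the limit is $+\infty$.

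For $n=2$ it is worth checking that the value is a supremum and not merely a limit of suprema. The strict inequality in \eqref{intro:doubleinequality} gives $\sigma_k(M_h)<\lambda_k|\Omega|/|\partial\Omega|$ for every admissible $h$. Since the inner suprema are monotone in $R$ and converge to this value, it is the supremum, and it is not attained.

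The only point needing care is this exchange of the supremum with the limit in $R$, which relies on the boundedness of smooth $h$ on the compact $\Omega$ and on the monotonicity of the classes. I do not expect a genuine obstacle: the substantive content lies in Theorems \ref{thm:MainIntro} and \ref{thm:onedfibre} and in the asymptotics \eqref{Case n>2}.
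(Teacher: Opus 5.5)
Your proposal is correct and follows the paper's route: the paper obtains the corollary directly from Theorems~\ref{thm:MainIntro} and~\ref{thm:onedfibre} combined with the asymptotics \eqref{Case n>2}, exactly as you do. Your remark that every smooth $h$ on the compact base is bounded, so that the unconstrained supremum is the increasing limit in $R$ of the constrained suprema, makes explicit a step the paper leaves implicit.
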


The condition on the total dimension $N$ being at least $3$ stems from general topological bounds on eigenvalues normalised by perimeter when $N = 2$, see \cite[Section 3]{CoGiGoSh2024} for a discussion.
In particular, in \cite[Theorem 1.1]{FaTaYu2015}, Fan, Tam and Yu give a complete answer to this maximisation problem on warped products. For $k = 1$, the maximiser amongst all metrics is in fact known to be the critical catenoid, which happens to be a warped product, by the work of Fraser and Schoen~\cite{FrSc2016}. 
From Theorems~\ref{thm:MainIntro} and~\ref{thm:onedfibre}, it is simple to prove Theorem~\ref{thm:isoptype}.
\begin{proof}[Proof of Theorem \ref{thm:isoptype}]
When $n = 2$, inequality \eqref{ineq:isoptype} can be read directly from \eqref{ineq:introgeneral}, using the observation that $|M_h|=|\Sigma|\int_{\Omega}h^n$ and $|\partial M_h|=|\Sigma|\int_{\partial\Omega}h^n$. The case where $n \ge 3$ follows in a similar way after applying H\"older's inequality with exponent $p=n/(n-2)$.
Finally, the saturation limit \eqref{limitcaseisop} is a consequence of \eqref{Case n>2}.
\end{proof}

\subsection{Stability for two-dimensional fibres}
We record the case $n=2$ of Theorem \ref{thm:MainIntro} separately since it plays a special role. 
\begin{coro}
Let $R \ge 1$ and $h \le R$ be a smooth warping function. Let $M_h=\Omega\times_h\Sigma$ be a warped product of dimension $N \geq 3$ with fibre $\Sigma$ of dimension $n=2$. 
If $h\equiv 1$ on $\partial \Omega$, then for each $k\geq 1$,
\begin{equation} \label{Case n=2}
\sigma_k(M_h) \leq R^2\sigma_k(M_R)<\frac{\lambda_k\vert \Omega\vert}{\vert \partial \Omega \vert},
\end{equation}
and
\begin{equation} \label{Case n=2 sharp}
\sup\{\sigma_k(M_h), h=1\ \text{on} \  \partial \Omega\} =\frac{\lambda_k\vert \Omega\vert}{\vert \partial \Omega \vert} .
\end{equation}
\end{coro}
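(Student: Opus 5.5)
This corollary is the specialisation $n=2$ of Theorem~\ref{thm:MainIntro}, so the plan is to read off both assertions from \eqref{intro:doubleinequality} and \eqref{Case n>2}, with care taken over the supremum without a bound on $h$.

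\emph{The double inequality \eqref{Case n=2}.} Fix $R\ge 1$ and a smooth $h\le R$ with $h\equiv 1$ on $\partial\Omega$. Then $h$ belongs to the admissible class in \eqref{intro:doubleinequality}. With $n=2$, that display gives
\[
\sigma_k(M_h)\le \sup\{\sigma_k(M_{\tilde h}) : \tilde h\le R,\ \tilde h=1 \text{ on }\partial\Omega\}=R^2\sigma_k(M_R)<R^{0}\frac{\lambda_k|\Omega|}{|\partial\Omega|}.
\]
This is exactly \eqref{Case n=2}.

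\emph{The sharp supremum \eqref{Case n=2 sharp}.} For the upper bound, let $h$ be any smooth positive warping function with $h=1$ on $\partial\Omega$. Since $\Omega$ is compact, $h$ is bounded, so $h\le R$ with $R:=\max\{1,\max_\Omega h\}<\infty$. The previous step then gives $\sigma_k(M_h)<\lambda_k|\Omega|/|\partial\Omega|$, a bound independent of $R$. Alternatively, this follows at once from \eqref{ineq:introgeneral} with $n=2$: the numerator is $|\Omega|$ and the denominator is $|\partial\Omega|$. Hence the supremum is at most $\lambda_k|\Omega|/|\partial\Omega|$.

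For the reverse inequality, use \eqref{Case n>2}, which gives $R^2\sigma_k(M_R)\to\lambda_k|\Omega|/|\partial\Omega|$ as $R\to\infty$. By the equality in \eqref{intro:doubleinequality}, for each $R$ and each $\varepsilon>0$ there is an admissible $h_R$ with $\sigma_k(M_{h_R})>R^2\sigma_k(M_R)-\varepsilon$. Letting $R\to\infty$ and then $\varepsilon\to0$ shows that the supremum is at least the claimed value. The two bounds together give \eqref{Case n=2 sharp}.

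The only point requiring thought is the first half of the supremum step. The supremum in \eqref{Case n=2 sharp} ranges over all $h$, not only bounded ones, so one must observe that compactness of $\Omega$ makes every admissible $h$ bounded, and that the $n=2$ bound does not depend on $R$. Everything else is direct substitution.
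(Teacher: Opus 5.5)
Your proof is correct and follows the paper's route: the corollary is read off as the $n=2$ case of Theorem~\ref{thm:MainIntro}, namely Lemma~\ref{lem:doubleinequality} together with the saturation statement, combined with the asymptotics \eqref{Case n>2}. Your extra check that the unrestricted supremum in \eqref{Case n=2 sharp} is still bounded by $\lambda_k|\Omega|/|\partial\Omega|$, either by compactness of $\Omega$ or directly from \eqref{ineq:introgeneral} with $n=2$, correctly fills the one point the paper leaves implicit.
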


In our construction, \eqref{Case n=2 sharp} is attained by a sequence of warping functions that become very large everywhere in the interior of $\Omega$. This behaviour is actually required in order to approach the optimal bound.
Indeed, by using more precise trial functions we obtain a quantitative stability improvement of \eqref{Case n=2}.

\begin{theorem} \label{thm:Stability}
Let $M_h=\Omega\times_h\Sigma$ be a warped product of dimension $N \geq 3$ with fibre $\Sigma$ of dimension $n=2$. 
For any warping function $h$ such that
$\int_{\partial \Omega} h^n \, dA_{\partial \Omega} = |\partial \Omega|$, let $\delta(h):= \frac{\lambda_k \vert \Omega\vert}{\vert \partial \Omega\vert}-\sigma_k(M_{h})>0$. If $q\in \Omega$ and $B(q,r) \subset \Omega$ is the ball of radius $r$ and centre $q$, then the following quantitative estimate holds:
$$
\int_{B(q,r)}h^2dV_{\Omega}\ge \frac{\lambda_k^2r^2}{4\delta(h)}\frac{\vert B(q,\frac{r}{2})\vert^2}{\vert\partial \Omega\vert}-\lambda_kr^2 \vert B(q,r)\vert.
$$    
In particular, if $h_\eps$ is a family of warping functions such that $\int_{\partial \Omega} h_\eps^n \, dA_{\partial \Omega} = |\partial \Omega|$ and if
$$\sigma_k(M_{h_\eps})\xrightarrow{\eps\to0}\frac{\lambda_k|\Omega|}{|\partial\Omega|},$$
then for each subdomain $D\subset\Omega$, 
\begin{equation}\label{eq:limstab}
    \lim_{\eps\to 0}\int_{D}h_\eps^2\,dV_{\Omega}=+\infty.
\end{equation}

\end{theorem}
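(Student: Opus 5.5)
We use the separation of variables already underlying inequality \eqref{ineq:introgeneral}. With $n=2$, functions of the form $u(x,q)=f(x)\phi_j(q)$, where $\phi_j$ is a $\lambda_j$-eigenfunction of $\Sigma$, satisfy
\[
|\nabla u|^2\,dV_{M_h} = \bigl(h^2|\nabla f|^2+\lambda_j f^2\bigr)\phi_j^2\,dV_\Omega\,dV_\Sigma ,
\]
since $h^{n-2}=1$, and the boundary term is $\int_{\partial\Omega}h^2 f^2\,dA_{\partial\Omega}\int_\Sigma \phi_j^2$. We take the $(k+1)$-dimensional space $E=\spn\{f\phi_0,\dots,f\phi_k\}$ with a single base function $f$, which the proof of \eqref{ineq:introgeneral} used with $f\equiv 1$. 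Because the $\phi_j$ are $L^2(\Sigma)$-orthogonal and the mixed gradient terms integrate to zero, the Rayleigh quotient on $E$ is maximised at $\phi_k$. This gives
\[
\sigma_k(M_h)\le \frac{\int_\Omega \bigl(h^2|\nabla f|^2+\lambda_k f^2\bigr)\,dV_\Omega}{\int_{\partial\Omega}h^2f^2\,dA_{\partial\Omega}} .
\]
The inequality holds for every $f$ whose boundary term is nonzero.

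Next we perturb the constant. We take $f=1+t\psi$, where $\psi$ is a Lipschitz bump with $\psi=1$ on $B(q,r/2)$, $\psi=0$ off $B(q,r)$, $0\le\psi\le1$ and $|\nabla\psi|\le 2/r$. Since $B(q,r)\subset\Omega$ we have $f=1$ on $\partial\Omega$, so the denominator equals $|\partial\Omega|$ by the normalisation. The numerator is
\[
\lambda_k|\Omega|+2t\lambda_k\int\psi+t^2\Bigl(\int h^2|\nabla\psi|^2+\lambda_k\int\psi^2\Bigr).
\]
To make the first-order term \emph{lower} the bound we take $t=-s<0$. Using $\int\psi\ge|B(q,r/2)|$, $\int h^2|\nabla\psi|^2\le \frac{4}{r^2}\int_{B(q,r)}h^2$ and $\int\psi^2\le|B(q,r)|$, we obtain
\[
|\partial\Omega|\,\sigma_k(M_h)\le \lambda_k|\Omega|-2s\lambda_k|B(q,\tfrac r2)|+s^2\Bigl(\tfrac{4}{r^2}I+\lambda_k|B(q,r)|\Bigr),
\]
where $I=\int_{B(q,r)}h^2$. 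Equivalently,
\[
\delta(h)|\partial\Omega|\ge 2s\lambda_k|B_{r/2}|-s^2A, \qquad A=\tfrac4{r^2}I+\lambda_k|B_r| .
\]

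We then optimise in $s$. The choice $s=\lambda_k|B_{r/2}|/A$ yields $\delta|\partial\Omega|\ge \lambda_k^2|B_{r/2}|^2/A$, that is,
\[
\tfrac{4}{r^2}I+\lambda_k|B_r|\ge \frac{\lambda_k^2|B_{r/2}|^2}{\delta|\partial\Omega|}.
\]
Multiplying by $r^2/4$ gives
\[
I\ge \frac{\lambda_k^2r^2|B_{r/2}|^2}{4\delta|\partial\Omega|}-\frac{\lambda_k r^2}{4}|B_r| .
\]
This is stronger than the claimed estimate, whose last term is $-\lambda_k r^2|B_r|$. The constants can therefore be relaxed, and any cruder cutoff bound also suffices. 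Positivity of $\delta(h)$ is \eqref{ineq:introgeneral} with $n=2$.

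For the consequence, fix a subdomain $D$ and pick a ball $B(q,r)\subset D$. Then $\delta(h_\eps)\to0$ forces the right-hand side, and hence $\int_D h_\eps^2\ge\int_{B(q,r)}h_\eps^2$, to diverge.

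The main obstacle is justifying the reduction to $E$. The variational characterisation requires a $(k+1)$-dimensional space of $H^1(M_h)$ functions whose traces are linearly independent, which holds since $f=1$ on $\partial\Omega$. It also requires checking that the quotient's maximum over $E$ is attained at $\phi_k$, which follows from orthogonality because the quotient is diagonal in the basis $f\phi_j$ with entries increasing in $\lambda_j$. A Lipschitz $\psi$ suffices because trial functions need only lie in $H^1$.
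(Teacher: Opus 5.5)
Your argument is correct and is essentially the paper's: Proposition~\ref{prop:stablebody} likewise inserts $1-tf$, with $f$ a bump supported in $B(q,r)$, into $\mathcal{R}_{\lambda_k,h}$ and minimises the resulting quadratic in $t$. The differences are minor: the paper gets $\sigma_k(M_h)\le\sigma_{\lambda_k,0}(h)$ from separation of variables rather than from your explicit product trial space, and its choice $f=\dist(x,\partial B(q,r))$ gives the subtracted term $\lambda_k r^2|B(q,r)|$ where your plateau cutoff gives the sharper $\lambda_k r^2|B(q,r)|/4$ (also, your displayed identity for $|\nabla u|^2\,dV_{M_h}$ is not pointwise and holds only after integrating over $\Sigma$ with $\int_\Sigma|\nabla\phi_j|^2=\lambda_j\int_\Sigma\phi_j^2$).
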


\begin{rem}
    From Theorem~\ref{thm:isoptype}~and Corollary~\ref{coro:initialproject},
    we see that for $n \geq 3$ and each index $k \geq  1$, there exists a family of warping functions $h_\epsilon$ such that $h_\epsilon \equiv 1$ on $\partial \Omega$ and $\sigma_k(M_{h_\epsilon}) \to \infty$ as $\epsilon \to 0$, and so
    $|M_{h_\epsilon}| = |\Sigma| \int_\Omega h_\epsilon^n \to \infty$. In particular,
    $$ \lim_{\eps\to 0}\int_\Omega h_\epsilon^n = +\infty.$$
\end{rem}

\subsection{Constraining the \texorpdfstring{$\RL^p$}{Lp} norm of
\texorpdfstring{$h$}{h}}
If $\int_{\partial \Omega} h^n \, dA_{\partial \Omega} = |\partial \Omega|$, inequality~\eqref{ineq:introgeneral} provides an upper bound in terms of the $\RL^{n-2}$-norm of $h$ while the hypothesis $h\leq R$ is an $\RL^\infty$-norm constraint. H\"older's inequality leads to the following immediate corollary.

\begin{coro}\label{coro: integralbound}
Let $M_h=\Omega\times_h\Sigma$ be a warped product with fibre $\Sigma$ of dimension $n\geq 3$. Suppose that $\int_{\partial \Omega} h^n \dA = |\partial \Omega|$.
If $n-2\leq p<\infty$ then, for each index $k\geq 1$,
\begin{equation} \label{ineg: n ge 3}
\sigma_k(M_h) < \frac{\lambda_k \vert \Omega \vert^{\frac{p-{n+2}}{p}}}{|\partial\Omega|}\|h\|_{\RL^p}^{n-2}.
\end{equation}       
\end{coro}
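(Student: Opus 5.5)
The plan is to start from inequality \eqref{ineq:introgeneral} of Theorem~\ref{thm:upperbound}, which gives
\begin{equation}
\sigma_k(M_h) < \lambda_k \frac{\int_\Omega h^{n-2}\,\dV}{\int_{\partial\Omega} h^n\,\dA}.
\end{equation}
Under the normalisation $\int_{\partial \Omega} h^n \dA = |\partial\Omega|$, the denominator equals $|\partial\Omega|$. It then remains to bound the numerator $\int_\Omega h^{n-2}\,\dV$ by an $\RL^p$ norm of $h$. Since $n\ge 3$, the exponent $n-2\ge 1$ is positive, so this is a genuine integrability constraint rather than the degenerate case $n=2$.

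For the numerator I will use H\"older's inequality on $\Omega$ with the pair of functions $h^{n-2}$ and $1$.

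When $p>n-2$, take the exponent $r=p/(n-2)>1$ and its conjugate $r'=p/(p-n+2)$. This gives
\begin{equation}
\int_\Omega h^{n-2}\,\dV \le \Bigl(\int_\Omega h^{p}\,\dV\Bigr)^{\frac{n-2}{p}} |\Omega|^{\frac{p-n+2}{p}} = |\Omega|^{\frac{p-n+2}{p}}\,\|h\|_{\RL^p}^{n-2}.
\end{equation}

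When $p=n-2$, this is simply an equality: $\|h\|_{\RL^{n-2}}^{n-2}=\int_\Omega h^{n-2}$ and $|\Omega|^0=1$.

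Substituting either case into the first inequality yields \eqref{ineg: n ge 3}. The inequality stays strict because it is inherited from the strict inequality in \eqref{ineq:introgeneral}, and H\"older's inequality only weakens it non-strictly.

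There is essentially no obstacle here. The only points to check are bookkeeping: the exponent in the statement, written $\frac{p-n+2}{p}$, must match the conjugate exponent computed above, and one should observe that the hypothesis $p\ge n-2$ is exactly what makes $r\ge 1$, so that H\"older's inequality applies.
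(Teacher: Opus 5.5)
Your proposal is correct and is essentially the paper's own proof. The paper likewise applies H\"older's inequality with conjugate exponents $p/(n-2)$ and $p/(p-n+2)$ to the numerator of \eqref{ineq:introgeneral} under the boundary normalisation, with strictness inherited from that inequality. Your separate treatment of the endpoint $p=n-2$, where the conjugate exponent degenerates, is a small extra bit of care that the paper leaves implicit.
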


\begin{rem}
In the case where $h=1$ on $\partial\Omega$, for $p=n-2$, we see that inequality \eqref{ineg: n ge 3} is a special case of inequality \eqref{ineq:introgeneral}. On the other hand, letting $p\to\infty$ in \eqref{ineg: n ge 3} leads to 
$$\sigma_k(M_h) \leq \frac{\lambda_k \vert \Omega \vert}{|\partial\Omega|}\|h\|_{\RL^\infty}^{n-2},$$
which is the non-strict version of inequality \eqref{intro:doubleinequality}.
\end{rem}
To complete the picture provided by Corollary \ref{coro: integralbound} we also investigate the case $1\leq p< n-2$.
\begin{theorem}\label{thm:3dimconnected}
Let $M_h=\Omega\times_h\Sigma$ be a warped product with fibre $\Sigma$ of dimension $n\geq 3$. 
If $1\leq p<n-2$ and if $\partial \Omega$ is connected (in particular, $d\geq 2$), then for each positive constant $R>0$,
\begin{equation}  \label{case large n}
\sup\left\{\sigma_k(M_h) : \int_{\Omega}h^p \dV \leq R; \, h\equiv1\ \text{on}\ \partial \Omega \right\}=+\infty.
\end{equation}
\end{theorem}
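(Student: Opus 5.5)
We construct, for each $R>0$, a family of warping functions $h_\eps$ with $h_\eps\equiv 1$ on $\partial\Omega$ and $\int_\Omega h_\eps^p\dV\le R$, such that $\sigma_k(M_{h_\eps})\to\infty$. The idea is to make $h_\eps$ huge on a thin collar of $\partial\Omega$ while keeping it bounded elsewhere. Since $p<n-2$, the $\RL^p$ cost of the collar stays small while its contribution to the Dirichlet energy in the fibre directions becomes dominant.

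Through separation of variables, $\sigma_k(M_h)$ is the $k$-th element of the union over $j$ of the spectra of the Steklov problems for $L_h^{(j)}f=-\div(h^n\nabla f)+\lambda_j h^{n-2}f$ on $\Omega$, with boundary weight $h^n=1$. It therefore suffices to show the following: the first eigenvalue of the $j=0$ problem beyond $\sigma_k$-level, and all eigenvalues of the $j\ge 1$ problems, blow up. For the $j=0$ problem we need its $(k+1)$-st eigenvalue to tend to infinity. For the $j\ge1$ problems we need their first eigenvalues to tend to infinity; monotonicity in $\lambda_j$ reduces this to $j=1$. We then have a min--max lower bound of the form
\[
\sigma(f)=\frac{\int_\Omega h^n|\nabla f|^2+\lambda_j h^{n-2}f^2}{\int_{\partial\Omega}f^2}.
\]

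\textbf{Construction.} Fix a collar $\partial\Omega\times[0,2\eps)$ with normal coordinate $t$. Let $h_\eps=1$ at $t=0$, rise within the layer $t\in[0,\eps^a)$ to the value $H=\eps^{-b}$, stay equal to $H$ for $t\in[\eps^a,\eps]$, and return to $1$ (or to something small) on the rest of $\Omega$. The $\RL^p$ constraint then requires roughly $H^p\eps\lesssim R$; for instance $b<1/p$, taking $H=\eps^{-1/p}$ up to constants.

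\textbf{Lower bound for $j\ge1$.} For a function $f$ on the collar and $s\in[\eps^a,\eps]$, the one-dimensional inequality
\[
f(0)^2\le 2f(s)^2+2\eps^a\int_0^{\eps^a}|\partial_t f|^2\,dt
\]
holds at each boundary point, up to metric distortion. Averaging over $s$ gives
\[
\int_{\partial\Omega}f^2\lesssim \frac{1}{\eps}\int_{\text{plateau}}f^2+\frac{\eps^a}{\min_{\text{layer}}h^n}\int h^n|\nabla f|^2 .
\]
The plateau term is bounded by $(\eps\lambda_1H^{n-2})^{-1}$ times the numerator, and $\eps H^{n-2}=\eps^{1-(n-2)/p}\to\infty$ precisely because $p<n-2$. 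In the transition layer $h\ge 1$, so the second factor is at most $\eps^a\to0$. Hence the first eigenvalue of every $j\ge1$ problem tends to $+\infty$.

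\textbf{The $j=0$ problem.} Here there is no potential term, and this is the main obstacle. Since $\sigma_0=0$ for $j=0$, we need $\sigma_k\to\infty$ for the weighted problem $-\div(h^n\nabla f)=0$ with flux condition $\partial_\nu f=\sigma f$. Connectedness of $\partial\Omega$ enters here: the plateau region is a connected thick shell on which $h^n$ is huge, so it acts like a single conductor. Any $f$ orthogonal to constants on $\partial\Omega$ with bounded energy must be nearly constant on the plateau, by a Poincaré inequality on the connected shell $\partial\Omega\times[\eps^a,\eps]$ with energy weight $H^n$. Propagating this through the layer with the same one-dimensional estimate shows $f|_{\partial\Omega}$ is close in $\RL^2$ to a single constant. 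Orthogonality to constants then forces the Rayleigh quotient to blow up.

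The delicate part is making this uniform. The Poincaré constant of the thin shell in the normal direction is tiny, but its tangential Poincaré constant is fixed by $\partial\Omega$, which is connected. We must check that $\eps^a$, $H$ and $\eps$ can be tuned so that all error terms vanish while $\int h^p\le R$. The plan is to use $h_\eps$ comparable to $1$ outside the collar, and to take $|\Omega|$-small adjustments so that the total $\RL^p$ norm stays at most $R$, rescaling $H$ by a fixed constant if needed.
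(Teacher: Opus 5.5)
Your proposal is correct and follows essentially the paper's route. You place a plateau of height $H\sim\eps^{-1/p}$ on a collar of width $\sim\eps$. You force $\sigma_{\lambda_1,0}$ up by the dichotomy ``$\RL^2$-mass on the plateau versus gradient in the transition layer'' (using $p<n-2$). You force $\sigma_{0,1}$ up by the Poincar\'e inequality coming from $\lambda_1(\del\Omega)>0$; you apply it on the whole shell, while the paper applies it slice by slice together with Lemma~\ref{lemma:deBrunopasproduit}, which is the same mechanism, and the exponent check you left open reduces to $\eps H^n\to\infty$, i.e.\ $p<n$.

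Two small repairs are needed:
\begin{enumerate}
\item For $s\in[\eps^a,\eps]$, your one-dimensional inequality should read $f(0)^2\le 2f(s)^2+2s\int_0^s|\partial_t f|^2\,dt$. This still gives an $O(\eps)$ error, since $h\ge 1$ on $[0,\eps]$.
\item Since $R$ may be smaller than $|\Omega|$, you must take $h_\eps$ small, not comparable to $1$, away from the collar. This is harmless because your lower bounds never use those values, just as the paper only imposes $h_m\ge r$ on $\Omega\setminus\Omega_{2m}$.
\end{enumerate}
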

The connectedness of the boundary plays an important role in our proof of Theorem \ref{thm:3dimconnected}. 
To illustrate the importance of this condition, we show that when the base $\Omega$ has dimension $d = 1$, so that the boundary cannot be connected, the first eigenvalue remains bounded with any $\RL^p$ constraint on $h$ for $p \ge 1$.
\begin{theorem}\label{dim basis 1}
Let $M_h=[0,L]\times_h\Sigma$ be a warped product with fibre $\Sigma$ of dimension $n\geq 2$. Let $p\ge 1$. 
Suppose that $h(0)=h(L)=1$.
Then
\begin{equation}
    \sigma_1(M_h) \le  \frac{8 n}{L^{\frac{n + p}{p}}}\|h\|_{\RL^p}^n.
\end{equation}
 
\end{theorem}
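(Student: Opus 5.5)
We bound $\sigma_1(M_h)$ through the variational characterisation on $M_h$, using one trial function that depends only on the base variable $t\in[0,L]$. Since $\partial M_h=\{0,L\}\times\Sigma$ and $h(0)=h(L)=1$, the boundary measure is $|\partial M_h|=2|\Sigma|$ and the volume element is $h^n\,dt\,dV_\Sigma$. For $u=f(t)$ with $f(0)=1$ and $f(L)=-1$, the function $u$ is $\RL^2(\partial M_h)$-orthogonal to constants. The Rayleigh quotient then gives
\begin{equation}
\sigma_1(M_h)\le \frac{|\Sigma|\int_0^L f'(t)^2h^n\,dt}{2|\Sigma|}=\frac12\int_0^L f'^2h^n\,dt .
\end{equation}
So the task reduces to a one-dimensional problem: find $f$ with $f(0)=1$, $f(L)=-1$ and $\int f'^2h^n$ controlled by $\|h\|_{\RL^p}^n$.

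We want the trial function to vary only where $h$ is small, and we want this to work even when $h$ is large in parts of the interval. I would take $f$ piecewise linear, transitioning from $1$ to $-1$ on a subinterval $I$ of length comparable to $L$. It is convenient to have two transitions, one near each end, so the choice is symmetric. Split $[0,L]$ into four intervals of length $L/4$ and let the first and last quarters be $I_1$ and $I_2$. Choose $f$ equal to $1$ at $t=0$, linear on $I_1$ down to $0$, equal to $0$ on the middle half, and linear on $I_2$ down to $-1$. Then $|f'|=4/L$ on $I_1\cup I_2$ and $f'=0$ elsewhere, so
\begin{equation}
\sigma_1(M_h)\le \frac12\cdot\frac{16}{L^2}\int_{I_1\cup I_2}h^n\,dt .
\end{equation}
The remaining task is to control $\int_{I_1\cup I_2} h^n$ by $\|h\|_{\RL^p}^n$.

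This is the main obstacle. For $p\ge n$, Hölder's inequality gives $\int h^n\le L^{1-n/p}\|h\|_p^n$, but that produces the wrong power of $L$ and fails for $p<n$. A better choice is to transition where $h$ is small, that is, on its sublevel sets. Since $h\ge 0$, Chebyshev's inequality gives
\begin{equation}
|\{h>s\}|\le s^{-p}\|h\|_p^p .
\end{equation}
Choose $s$ with $s^{-p}\|h\|_p^p=L/2$, which gives $s=(2/L)^{1/p}\|h\|_p$. Then the set $E=\{h\le s\}$ has measure at least $L/2$. Now take $f$ with $f(0)=1$ and $f(L)=-1$ that changes only on $E$, with $f'=-2/|E|$ there. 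This is an absolutely continuous function, with $|f'|\le 4/L$ on $E$ and $f'=0$ off $E$. It follows that
\begin{equation}
\int_0^L f'^2h^n\le \frac{16}{L^2}\,s^n\,|E|\le \frac{16}{L}\,s^n=\frac{16\cdot 2^{n/p}}{L^{1+n/p}}\|h\|_p^n .
\end{equation}
This has exactly the correct scaling $L^{-(n+p)/p}\|h\|_{\RL^p}^n$. Dividing by $2$ gives the bound $8\cdot 2^{n/p}L^{-(n+p)/p}\|h\|_p^n$.

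The last step is to match the constant $8n$. We have $2^{n/p}\le 2^n$ for $p\ge1$, which exceeds $n$ in general, so the threshold must be tuned. Replace the level $s$ by the one with $|\{h>s\}|\le (1-\theta)L$, taking $\theta$ of order $1/n$. Then $s^n\le (L(1-\theta))^{-n/p}\|h\|_p^n$, and $(1-1/n)^{-n}\le 4$ keeps the $s^n$ factor bounded. The factor $1/|E|$ contributes at most $n/L$. Together these should produce a constant of the form $Cn$, and the remaining bookkeeping is to arrange it as $8n$. Finally, $f$ is only Lipschitz rather than smooth, which is harmless since $\RW^{1,2}$ trial functions are admissible.
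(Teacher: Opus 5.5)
Your final argument is the paper's proof. With $\theta=1/n$ the threshold is $s=\bigl(\tfrac{n}{(n-1)L}\bigr)^{1/p}\|h\|_{\RL^p}$, the trial function has slope $2/|E|$ only on $E=\{h\le s\}$ with $|E|\ge L/n$, and the estimate $\tfrac12\int_E (2/|E|)^2h^n\,dt\le 2s^n/|E|\le \tfrac{2n}{L}\cdot 4L^{-n/p}\|h\|_{\RL^p}^n$ closes the bookkeeping at exactly $8n$. This requires keeping a single factor $1/|E|$: bounding $|f'|$ and $|E|$ separately, as in your earlier display, would cost an extra factor of $n$. One minor slip in a discarded aside: for $p\ge n$, H\"older does give the correct power $L^{-(n+p)/p}$, and in fact a better constant.
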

 
\subsection{Ideas of the proofs and plan of the paper.}
The proofs of the upper bounds proceed in two general steps. First, we use the variational characterisation in order to prove a general bound without constraints. Then, we find a model warping function realising this bound, often a constant warping function, but this warping function does not satisfy the imposed constraints. Finally, we show that we can approximate it with warping functions that do satisfy the constraints, in such a way that the eigenvalues converge and, so, saturate the upper bounds.

In Section \ref{sec:prelim}, we lay the foundations for the proofs of the main theorem by recalling the separation of variables on warped products, investigating the spectrum of the particular case $M_R$ for constant warping functions $R$, and presenting some technical lemmas and conventions. 

Section \ref{sec:control} presents continuity results. First, a relatively simple result allows us to approximate constant warping functions with warping functions having a prescribed boundary behaviour. Then, a homogenisation procedure when the fibres have dimension $n=1$ provides considerable flexibility in terms of the limit problem. This flexibility exhibited by the Steklov spectrum is a general phenomenon, in the sense that it is
possible, by perturbing the boundary in more or less singular ways, to
approximate other eigenvalue problems. For instance, by perforating the interior
of a manifold with many tiny, uniformly distributed holes, the Steklov
eigenvalues converge to those of the Laplacian, potentially with Neumann
boundary conditions \cite{GiHeLa2021,GiKaLa2021,GiLa2021}, or oscillations of the boundary allow one to adjust boundary weights for the Steklov problem \cite{KaLa2022}. As seen in equations \eqref{ineq:introgeneral} and \eqref{RayleighQuotient}, when the fibres have dimension $n=1$, in contrast with the other cases $n\ge 2$, the warping function $h$ appears with competing powers in the numerator of the Rayleigh quotient. While this leads to more singular behaviour that is more technically involved to analyse, it is also precisely what we can exploit using the homogenisation procedure in order to achieve the aforementioned flexibility.

Finally, in Section \ref{proof: theorems}, we obtain the upper bounds and stability results via a combination of variational and continuity methods.

\section*{Acknowledgements}
J.B and B.C acknowledge support of the SNSF project \emph{``Geometric Spectral Theory''}, grant number 200021-19689. A.G. acknowledges the support of the Natural Sciences and Engineering Research Council of Canada (NSERC), in particular through the discovery grant \emph{``Isoperimetry and spectral geometry''} (RGPIN-2022-04247), as well as the support of Fonds de recherche du Québec (FRQ) through the team research project \emph{``G\'eom\'etrie des \'ecarts spectraux pour les probl\`emes de Laplace et de Steklov''}.
K.G. acknowledges support from a Royal Society Research Grant entitled \emph{``Upper bounds for Steklov eigenvalues, ratios, and gaps''}. In addition, this work was partially supported by a grant from the Simons Foundation to K.G.

The authors would like to thank the Isaac Newton Institute for Mathematical Sciences, Cambridge, for their support and hospitality during the program Geometric spectral theory and applications, where work on this paper was undertaken. This work was supported by EPSRC grant EP/Z000580/1.

The authors are grateful to Mikhail Cherdantsev, Charlotte Dietze, Asma Hassannezhad, and Bernard Helffer for useful conversations.

\section{Preliminary material}\label{sec:prelim}

This section is devoted to precisely defining the notation we use in the paper as well as introducing some technical lemmas required in our proofs. We also explicitly obtain some properties of the spectrum for the cylindrical models $M_R = \Omega \times_R \Sigma$ where the warping function is some constant $R > 0$.

\subsection{Asymptotic notation} We make extensive use of the Landau--Vinogradov asymptotic notation in our proofs.
\begin{itemize}
    \item Indiscriminately, we write $a_m = O(b_m)$ or $a_m \lesssim b_m$ to indicate that there exists $C > 0$ so that $|a_m| \le C|b_m|$ for all $m$ sufficiently large. The explicit dependence on the parameter $m$ may sometimes be omitted.
    \item We write $a_m\asymp b_m$ to indicate that $a_m = O(b_m)$ and $b_m = O(a_m)$.
    \item We write $a_m = o(b_m)$ to indicate that $\tfrac{a_m}{b_m} \to 0$ as $m \to \infty$.
    \item We write $a_m = b_m + O(c_m) $ or $a_m = b_m + o(c_m)$ to indicate that $a_m - b_m = O(c_m)$ or $a_m - b_m = o(c_m)$, respectively. 
    \item Sometimes, rather than using the Landau--Vinogradov notation for a numerical sequence, we use it for a sequence of functions depending on other parameters. The notation should be understood as being uniform in the other parameters; where this is crucial we emphasise this fact.
    \item In some situations, the asymptotic notation is used \emph{mutatis mutandis} for a small parameter $t \to 0$ rather than $m \to \infty$.
\end{itemize}

\subsection{Separation of variables on warped products}\label{sec:sepvar}
The warped product structure allows the use of separation of variables. This is expressed in terms of auxiliary Steklov problems for the diffusion operator $L_{h}: \RC^\infty(\Omega) \to \RC^\infty(\Omega)$ on the base, which is defined by
$$
L_{h} f =-\div h^n \nabla f.
$$
For each $\lambda\geq 0$, consider the following auxiliary Steklov problem:
\begin{gather}\label{problem:auxiliary}
\begin{cases}
    L_h f +\lambda h^{n-2} f=0&\mbox{ in }\Omega,\\
    \partial_\nu f=\sigma f&\mbox{ on }\partial \Omega.
\end{cases}
\end{gather}
We make use of the weak formulation of this problem:
\begin{equation}\label{eq:weakproblem}
  \forall\phi\in W^{1,2}(\Omega), \qquad  \int_{\Omega}h^n\nabla f\cdot\nabla\phi+\lambda h^{n-2}f\phi\dV=\sigma\int_{\partial\Omega}h^nf\phi\dA.
\end{equation}
If the dimension of the base $\Omega$ is $d\geq 2$, then for each $\lambda\geq 0$, this problem has discrete unbounded spectrum:
\[0\leq\sigma_{\lambda,0}(h)\leq\sigma_{\lambda,1}(h)\leq\sigma_{\lambda,2}(h)\leq\ldots\nearrow+\infty.\]
Corresponding eigenfunctions $(f_{\lambda,\ell})_{\ell\geq 0  }$ 
normalised to $\int_{\partial\Omega}f_{\lambda,\ell}^2h^n\dA=1$ form a complete orthogonal set.
However, if the base is 1-dimensional, there are only two eigenvalues for each $\lambda$: 
$$\{\sigma_{\lambda,0}(h)\leq\sigma_{\lambda,1}(h)\}.$$
Indeed, the corresponding Dirichlet-to-Neumann map $\Lambda_h$ is a non-negative self-adjoint operator on $\RL^2(\partial\Omega,h^n\dA)$, with compact resolvent.

\begin{proposition}\label{prop:separation}
Let the eigenvalues of the Laplace operator on the fibre $\Sigma$ be $0=\lambda_0<\lambda_1\leq\lambda_2\leq\cdots\nearrow+\infty$.
The Steklov spectrum of $M_h=\Omega\times_h\Sigma$ is given by the multiset
\begin{gather}\label{eq:separatedeigenvalues}
\{\sigma_k(M_h)\}_{k\geq 0}=\{\sigma_{\lambda_j,\ell}(h)\,:\,j,\ell\geq 0\}.
\end{gather}    
\end{proposition}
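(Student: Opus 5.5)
The plan is to separate variables using an orthonormal eigenbasis of the fibre and then check completeness. First I compute the Laplacian of $g_h$ in product coordinates. Since $\dV_{g_h} = h^n \dV \de V_\Sigma$ and $|\nabla u|^2_{g_h} = |\nabla_\Omega u|^2 + h^{-2}|\nabla_\Sigma u|^2$, the Dirichlet energy and the boundary $L^2$ norm are
\begin{equation}
\int_M |\nabla u|^2 = \int_\Omega\int_\Sigma \bigl(h^n|\nabla_\Omega u|^2 + h^{n-2}|\nabla_\Sigma u|^2\bigr), \qquad \int_{\del M} u^2 = \int_{\del\Omega}\int_\Sigma h^n u^2 .
\end{equation}
Here I use that $\del M_h = \del\Omega\times\Sigma$, and that the outward normal is $\nu_\Omega$ (lifted), because the metric is orthogonal in the two factors. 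In strong form this gives $\Delta_{g_h} u = h^{-n}\div_\Omega(h^n\nabla_\Omega u) + h^{-2}\Delta_\Sigma u$ and $\del_\nu u = \del_{\nu_\Omega} u$.

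Let $(\psi_j)$ be an $L^2(\Sigma)$-orthonormal basis with $-\Delta_\Sigma\psi_j=\lambda_j\psi_j$. The first inclusion is direct. For $u(x,q)=f(x)\psi_j(q)$, the equation $\Delta u=0$ becomes $L_h f+\lambda_j h^{n-2}f=0$ after multiplying by $h^n$, and the boundary condition becomes $\del_\nu f=\sigma f$. So each $\sigma_{\lambda_j,\ell}(h)$ is a Steklov eigenvalue of $M_h$, with eigenfunction $f_{\lambda_j,\ell}\psi_j$. These products are orthonormal in $L^2(\del M_h)$, using the normalisation $\int_{\del\Omega} f^2h^n=1$ and the orthonormality of the $\psi_j$. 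Distinct pairs $(j,\ell)$ therefore give linearly independent eigenfunctions, and multiplicities are counted correctly.

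The converse is the main step: every Steklov eigenfunction must be a finite combination of such products. I would argue by completeness. Since $L^2(\del\Omega\times\Sigma, h^n\dA\, \de V_\Sigma)\cong L^2(\del\Omega,h^n\dA)\otimes L^2(\Sigma)$, and for each $j$ the traces $\{f_{\lambda_j,\ell}\}_\ell$ form a complete orthogonal set of $L^2(\del\Omega,h^n\dA)$, the family $\{f_{\lambda_j,\ell}\psi_j\}_{j,\ell}$ is complete in $L^2(\del M_h)$. This holds also when $d=1$, where $\del\Omega$ is two points and each $j$ contributes two functions. Now take any eigenfunction $u$ with eigenvalue $\sigma$. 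By self-adjointness of the Dirichlet-to-Neumann map of $M_h$, the trace of $u$ is orthogonal to every product eigenfunction whose eigenvalue differs from $\sigma$. A complete orthogonal family of eigenvectors of a self-adjoint operator with compact resolvent exhausts its spectrum with multiplicity. Hence $u|_{\del M}$ lies in the span of the products with eigenvalue $\sigma$, and that span is finite-dimensional by discreteness. Because a harmonic function is determined by its trace, this gives the multiset identity \eqref{eq:separatedeigenvalues}.

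The only delicate point is justifying that each $\{f_{\lambda,\ell}\}_\ell$ is complete. This follows because the auxiliary Dirichlet-to-Neumann map $\Lambda_h$, as stated above, is self-adjoint with compact resolvent on $L^2(\del\Omega,h^n\dA)$. For $\lambda_0=0$, its kernel consists of the constants.
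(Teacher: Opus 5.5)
Your proposal is correct and takes the same route as the paper, which only asserts that the products $f_{\lambda_j,\ell}(x)\phi_j(p)$ form a complete set of Steklov eigenfunctions of $M_h$ and refers to the literature for details. You supply the details the paper omits: the form of $\Delta_{g_h}$ and of the normal on $\partial M_h=\partial\Omega\times\Sigma$, orthonormality and completeness of the products in $L^2(\partial\Omega,h^n\,dA)\otimes L^2(\Sigma)$, and the self-adjointness argument that matches multiplicities.
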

The proof is standard: it follows by considering products of functions of the form 
$$u_{\lambda_j,\ell}(x,p)=f_{\lambda_j,\ell}(x)\phi_j(p),$$ 
where the functions $f_{\lambda_j,\ell}\in \RC^\infty(\Omega)$ are corresponding eigenfunctions of the above auxiliary Steklov problem~\eqref{problem:auxiliary} for $\lambda=\lambda_j$ and the functions $\phi_j\in \RC^\infty(\Sigma)$ are Laplace eigenfunctions
forming an orthonormal basis of $\RL^2(\Sigma)$. These functions form a complete set of Steklov eigenfunctions on $M_h$. For details, see~\cite{GiPoly2024}, where the operator $L_h$ is multiplied by $h^{-n}$. 
Of particular interest to us is that the eigenvalues $\sigma_{\lambda,\ell}(h)$ admit familiar variational characterisations in terms of the following Rayleigh quotient for $f\in \RW^{1,2}(\Omega)$:
\begin{gather}\label{RayleighQuotient}
\CR_{\lambda,h}(f)=\frac{\int_\Omega |\nabla f|^2h^n+\lambda f^2h^{n-2}\dV}{\int_{\partial\Omega}f^2h^n\dA}\,.    
\end{gather}
We observe that for each $k\geq 0$,
\begin{equation}\label{ineq:kcompk0}
    \sigma_k(M_h) \le \min_{(j+1)(\ell+1) = k+1}\sigma_{\lambda_{j},\ell}(h) \leq \min\{\sigma_{\lambda_k,0}(h),\sigma_{0,k}(h)\}.
\end{equation}
This follows directly from the representation~\eqref{eq:separatedeigenvalues}, since the labelled eigenvalues $\sigma_k(M_h)$ are listed in increasing order by definition. 
In particular, we often use that
\begin{equation}
    \sigma_1(M_h)=\min\{\sigma_{\lambda_1,0}(h),\sigma_{0,1}(h)\}
\end{equation}

We make extensive use of the following characterisation of $\sigma_{\lambda,0}$:
\begin{gather}\label{eq:cara3}
    \sigma_{\lambda,0}(h)=\min\bigg\{\CR_{\lambda,h}(f):f\in \RW^{1,2}(\Omega)\setminus\{0\}\bigg\}=\CR_{\lambda,h}(f_{\lambda,0}),
\end{gather}
with equality if and only if $f$ is a corresponding eigenfunction.

\begin{rem}
    The role of the dimensions of the fibres in Theorem~\ref{thm:MainIntro} is manifest in formula~\eqref{RayleighQuotient}, where different behaviours of $h$ are penalised for dimensions $n=1$ and $n \ge 2$. In particular,  in the proof of Corollary \ref{coro:initialproject} that follows, for fibres of dimension $n \geq 2$ it is the eigenvalues of the form $\sigma_{\lambda_k, 0}(h)$ that play a prominent role, while for fibres of dimension $n=1$ it is the eigenvalues of the form $\sigma_{0,k}(h)$ that do.
     \end{rem}

By using the constant function $f\equiv 1$ as a trial function in~\eqref{eq:cara3}, inequality~\eqref{ineq:kcompk0} leads to
\begin{gather}\label{ineq:consttest}
\sigma_k(M_h)\leq \sigma_{\lambda_k,0}(h)\leq\lambda_k\frac{\int_{\Omega}h^{n-2}\dV}{\int_{\partial\Omega}h^n \dA}.
\end{gather}

\noindent
For arbitrary $\lambda\geq 0$ and index $\ell\geq 0$, we also use the characterisation
\begin{gather}\label{eq:minmaxcharseparated}
\sigma_{\lambda,\ell}(h)= \min_{E\subset \RW^{1,2}(\Omega), \dm (E)=\ell+1} \max\{\CR_{\lambda,h}(u):\ u \in E\}.
\end{gather}

\subsection{Steklov spectrum of \texorpdfstring{$M_R$}{MR}} \label{Spectrum $M_R$} Recall that for $R>0$,
$$M_R=\Omega \times_R \Sigma = (\Omega,g_{\Omega})\times (\Sigma,R^2g_{\Sigma}).$$
In this section we investigate the spectrum of $M_R$ when $R$ is large. This is useful for the proof of Theorem \ref{thm:MainIntro}. In particular, we obtain \eqref{Case n>2} shortly.

For $h=R>0$, problem \eqref{problem:auxiliary} simplifies to
\begin{gather}\label{eq:sepwarpedconstant}
\begin{cases}
    -\Delta f+\frac{\lambda}{R^2}f=0&\mbox{ in }\Omega,\\
    \partial_\nu f=\sigma f&\mbox{ on }\partial \Omega.
\end{cases}
\end{gather}
This is the Steklov problem for the Helmholtz equation with energy $\mu=\lambda/R^2$.
The smallest eigenvalue of this equation is given by
$$\sigma_{\mu,0}=\min\left\{\int_{\Omega}\bigl(|\nabla f|^2+\mu f^2\bigr)\dV\,:\,f:\Omega\to\R, \int_{\partial\Omega}f^2\dA=1\right\}.$$
Because $\sigma_{\mu,0}$ is the smallest eigenvalue, it is simple. Let $f_\mu$ be the corresponding eigenfunction. Then we can compute the derivative with respect to $\mu$,
$$\sigma_{\mu,0}'=2\int_{\Omega}\bigl(\nabla f_\mu\cdot \nabla f_\mu'+\mu f_\mu f_\mu'\bigr)\dV+\int_\Omega f_\mu^2\dV.$$
We claim that the first integral on the right-hand side vanishes. Indeed, the function $f_\mu$ satisfies the weak equation corresponding to \eqref{eq:sepwarpedconstant}, that is \eqref{eq:weakproblem} with $h\equiv R$:
$$\forall \phi\in \RC^\infty(\Omega), \qquad \int_\Omega \bigl(\nabla f_\mu\cdot \nabla \phi+\mu f_\mu \phi\bigr)\,dV_\Omega=\sigma\int_{\partial\Omega}f_\mu \phi\,dA_{\partial\Omega}.$$
Using $\phi=f_\mu'$ as a trial function gives
$$\int_\Omega \bigl(\nabla f_\mu \cdot \nabla f_\mu'+\mu f_\mu f_\mu'\bigr)\dV=\sigma\int_{\partial\Omega}f_\mu f_\mu'\dA.$$
Now, differentiating the normalisation $\int_{\partial\Omega}f_\mu^2=1$ with respect to $\mu$ gives $\int_{\partial\Omega}f_\mu f_\mu' \dA=0$, proving the claim.
It follows that
$$\sigma_{\mu,0}'=\int_\Omega f_\mu^2 \dV=\frac{\int_\Omega f_\mu^2\dV}{\int_{\partial\Omega} f_\mu^2\dA}.$$
In particular, if we compute the derivative at $\mu=0$, using that $f_0$ is constant we get
\begin{gather}\label{derivativeStekMu}
    \frac{d}{d\mu}\sigma_{\mu,0}\Bigl|\Bigr._{\mu=0}=\frac{|\Omega|}{|\partial\Omega|}.
\end{gather}
Now in order to prove \eqref{Case n>2} of Theorem \ref{thm:MainIntro}, we compute:
\begin{gather*}
    \lim_{R\to\infty}R^2\sigma_{\lambda_k/R^2,0}
    =\lim_{R\to\infty}\lambda_k\frac{\sigma_{\lambda_k/R^2,0}-\sigma_{0,0}}{\lambda_k/R^2}
    =\lambda_k\sigma_{0,0}'
    =\lambda_k|\Omega|/|\partial\Omega|.
\end{gather*}
Finally, observe that for  $\lambda=\lambda_k$ with $k\ge 1$, taking the constant function $f=1$ as a trial function leads to
\begin{equation} \label{ineg: C}
    \sigma_{\lambda_k,0}(R) \le \frac{\lambda_k \vert \Omega\vert}{R^2\vert \partial \Omega \vert}.
\end{equation}
\noindent
On the other hand, $\sigma_{\lambda_k,1}(R)\ge \sigma_{0,1}(R)=\sigma_1(\Omega).$
We conclude that, if $R$ is large enough then,
\begin{equation}
    \sigma_j(M_R)=\sigma_{\lambda_j,0}(M_R),\qquad \forall j\in\{0,1,\dots,k\}.
\end{equation}
\noindent
In particular, 
$\lim_{R\to\infty}R^2\sigma_{k}(M_R)=\lim_{R\to\infty}R^2\sigma_{\lambda_k/R^2,0}$, which concludes the proof of 
\eqref{Case n>2} of Theorem \ref{thm:MainIntro}.

\begin{rem}
    One should compare formula \eqref{derivativeStekMu} to the well-known formulas for the derivative of simple Robin eigenvalues with respect to the Robin parameter. See~\cite{BucFreKen2017} for instance.
\end{rem}

\subsection{Almost orthogonality}\label{sec:teclem}
Given a Riemannian manifold $M$, let $N \subset M$ be a closed orientable hypersurface with normal vector $\nu$. For $\delta > 0$, let $T_\delta$ be the tubular neighbourhood of $N$ of width $\delta$, and for $t \in (-\delta,\delta)$ let $N_t$ be the hypersurface parallel to $N$ in the direction $t\nu$. If $\delta>0$ is small enough, $x' \in N$ and $t \in (-\delta,\delta)$ provide the Fermi coordinates for the the point $(x',t)\in T_{\delta}$, which is obtained by following the geodesic starting at $x'$ in direction $t \nu_{x'}$ for time $1$. In those coordinates, the metric satisfies $g=\de t^2 + g_N + \bigo{t}$. Furthermore, following the normal geodesic from $N$ to $N_t$ gives a diffeomorphism $\phi_t : N \to N_t$ and $\phi_t^* g_{N_t} = g_N + \bigo t$.  For a function $f : T_\delta \to \R$, we denote $f_t$ the pullback $\phi_t^* f(\cdot,t)$ as a function on $N$.

 The purpose of the next lemma is to show that any extension to $T_\delta$ of an orthonormal family on $N$ remains almost orthonormal on $N_t$, with defect controlled in terms of the Dirichlet energy.

\begin{lemma}
\label{lemma:deBrunopasproduit}
    Let $N \subset M$ be a closed orientable hypersurface of a compact Riemannian manifold. There exists $\delta>0$ and $C>1$ so that each function $f\in \RC^\infty(T_\delta)$ satisfies the following for all $t\in (-\delta,\delta)$:
    \begin{equation}
    \label{eq:normcontrol}
        \abs{\|f\|_{\RL^2(N)}^2 - \|f\|_{\RL^2(N_t)}^2} \le 
        C t^{1/2} \left( \|f\|_{\RL^2(N)} +  \|\nabla f\|_{\RL^2(T_t)}\right)^2.
    \end{equation}
    In particular, for any $f, \tilde f \in \RC^\infty(T_\delta)$,
    \begin{equation}
    \begin{aligned}
        \abs{\langle f, \tilde f \rangle_{\RL^2(N)} -  \langle f, \tilde f \rangle_{\RL^2(N_t)}} &\le 
        Ct^{1/2}\left(\|f\|_{\RL^2(N)} +  \|\nabla f\|_{\RL^2(T_t)} + \|\tilde f\|_{\RL^2(N)} +  \|\nabla \tilde f\|_{\RL^2(T_t)}\right)^2.
        \end{aligned}
    \end{equation}
\end{lemma}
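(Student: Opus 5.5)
We work in Fermi coordinates on $T_\delta$ and reduce everything to a one-dimensional fundamental theorem of calculus in the normal variable $t$. Take $\delta>0$ small enough that the Fermi coordinates $(x',t)\in N\times(-\delta,\delta)$ are defined on $T_\delta$. In these coordinates the volume form is $\de V = J(x',t)\,\de t\,\de A_N(x')$ and the pulled-back area form of $N_t$ is $J(x',t)\,\de A_N(x')$, where $J$ is smooth, $J(x',0)=1$, and $|J-1|\le C_0|t|$, $|J^{-1}|\le 2$ on $T_\delta$. Write $f_t=\phi_t^*f(\cdot,t)$. Then
\begin{equation}
\|f\|^2_{\RL^2(N_t)}=\int_N f_t^2\,J(\cdot,t)\,\de A_N .
\end{equation}
So up to an error $C_0|t|\int_N f_t^2$ it suffices to compare $\int_N f_t^2$ with $\int_N f_0^2$. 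We assume $t>0$; the case $t<0$ is symmetric.

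The key step is pointwise in $x'\in N$. For each such point,
\begin{equation}
f_t(x')-f_0(x')=\int_0^t \del_s f(x',s)\,\de s ,
\end{equation}
and Cauchy--Schwarz gives
\begin{equation}
|f_t(x')-f_0(x')|^2\le t\int_0^t|\del_s f|^2\,\de s .
\end{equation}
Since $|\del_s f|\le|\nabla f|$ and $J^{-1}\le 2$, integrating over $N$ yields
\begin{equation}
\|f_t-f_0\|_{\RL^2(N)}\le \sqrt{2t}\,\|\nabla f\|_{\RL^2(T_t)} .
\end{equation}
Here $T_t$ denotes the slab between $N$ and $N_t$, which is contained in the tube of width $t$. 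This bound is the only place where the Dirichlet energy enters, and it is where the factor $t^{1/2}$ comes from.

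Combining the pieces uses $a^2-b^2=(a-b)(a+b)$. First, by the triangle inequality,
\begin{equation}
\|f_t\|_{\RL^2(N)}\le \|f\|_{\RL^2(N)}+\sqrt{2t}\,\|\nabla f\|_{\RL^2(T_t)}\lesssim \|f\|_{\RL^2(N)}+\|\nabla f\|_{\RL^2(T_t)}
\end{equation}
for $t<\delta\le 1$. Therefore
\begin{equation}
\Bigl|\|f_t\|^2_{\RL^2(N)}-\|f_0\|^2_{\RL^2(N)}\Bigr|\le \|f_t-f_0\|\,\bigl(\|f_t\|+\|f_0\|\bigr)\lesssim t^{1/2}\bigl(\|f\|_{\RL^2(N)}+\|\nabla f\|_{\RL^2(T_t)}\bigr)^2 .
\end{equation}
The Jacobian error satisfies $C_0 t\,\|f_t\|^2\lesssim t\,(\cdots)^2\le t^{1/2}(\cdots)^2$, since $t\le 1$. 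This proves \eqref{eq:normcontrol}.

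For the inner product statement we use polarisation,
\begin{equation}
4\langle f,\tilde f\rangle=\|f+\tilde f\|^2-\|f-\tilde f\|^2 .
\end{equation}
We apply \eqref{eq:normcontrol} to $f\pm\tilde f$ and bound $\|f\pm\tilde f\|+\|\nabla(f\pm\tilde f)\|$ by the sum of the four norms appearing on the right-hand side, using the triangle inequality.

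We do not expect a genuine obstacle. The only care needed is to keep the constant uniform in $t\in(-\delta,\delta)$ and independent of $f$. This holds because $J$ and the comparison between $|\del_t f|$ and $|\nabla f|$ depend only on the geometry of the compact tube $\overline{T_\delta}$.
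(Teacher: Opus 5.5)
Your proof is correct and follows essentially the same route as the paper. Both arguments use the fundamental theorem of calculus along normal geodesics in Fermi coordinates, Cauchy--Schwarz to produce the factor $t^{1/2}\|\nabla f\|_{\RL^2(T_t)}$, an $O(t)$ Jacobian correction absorbed via $t\lesssim t^{1/2}$, and polarisation for the inner products. The only cosmetic difference is that you bound $\|f_t-f_0\|_{\RL^2(N)}$ and use $a^2-b^2=(a-b)(a+b)$, whereas the paper expands $f(x',t)^2$ into square and cross terms and estimates each separately.
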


\begin{rem}
    This lemma also applies where $N$ is a boundary component of a compact manifold, with the difference that the neighbourhood $T_\delta$ is now unidirectional and $N_t$ is only defined for $t > 0$. The proof goes through with the clear modifications.
\end{rem}

\begin{proof}
By the fundamental theorem of calculus along geodesics, for every $x' \in N$ we have that
\begin{equation}
\begin{aligned}
    f(x',t)^2 &= \left(f(x',0) + 
    \int_0^t \frac{\del f}{\del s}(x',s) \de s
    \right)^2  \\
    &= f(x',0)^2 + 
    2f(x',0)
    \int_0^t \frac{\del f}{\del s}(x',s) \de s + 
    \left(
    \int_0^t \frac{\del f}{\del s}(x',s) \de s\right)^2.
    \end{aligned}
\end{equation}
Integrating over $N_t$ on both sides with respect to $\de V_{g_{N_t}}=\de V_{g_N}+O(t)$, the left-hand side is $\|f\|_{\RL^2(N_t)}^2$. On the right-hand side, we have 
\begin{equation}
\int_{N_t} f(x',0)^2 \de V_{g_{N_t}} = (1 + \bigo{t}) \|f\|_{\RL^2(N)}^2.
\end{equation}
Furthermore, using the Cauchy--Schwarz inequality, and the Fermi coordinates to compute the integral over $T_t$, we have that
\begin{equation}
\begin{aligned}
 \int_{N_t} \left(\int_0^t \frac{\del f}{\del s}(x',s) \de s\right)^2 \de V_{g_{N_t}} &\lesssim t \int_N \int_0^t \abs{\frac{\partial f}{\partial s}(x',s)}^2 \de s  \de V_{g_N} \\
 &\lesssim t \int_{T_t} |\nabla f|^2 \de V_M.
 \end{aligned}
\end{equation}
Finally, for the cross terms we also use the Cauchy--Schwarz inequality and the previous computations to obtain
\begin{equation}
\begin{aligned}
    \left|\int_{N_t} f(x',0) \int_0^t \frac{\del f}{\del s}(x',s) \de s \de V_{g_{N_t}}\right| &\lesssim  \|f\|_{\RL^2(N)} \left(\int_N \left(\int_0^t \frac{\del f}{\del s}(x',s) \de s \right)^2 \de V_{g_N}\right)^{1/2} \\
    &\lesssim t^{1/2} \|f\|_{\RL^2(N)} \|\nabla f\|_{\RL^2(T_t)}.
\end{aligned}
\end{equation}
Combining these estimates, along with the trivial $t \lesssim t^{1/2}$, is precisely our claim for the norms, the polarisation identity gives the statement for the inner products.
\end{proof}

\section{Control of the spectrum}
\label{sec:control}
In this section, we approximate the warping function $h$ and prescribe the corresponding eigenvalue behaviour. We take different approaches depending on whether the fibre has dimension $n \geq 2$ or dimension $n=1$.

\subsection{Approximation of warping functions.}

The first statement is that we can approximate a given warping function without changing the boundary behaviour.

\begin{proposition}
\label{prop:continuitybis}
    Let $a < b \in (0,\infty)$ and $r, R \in [a,b]$. Let $h \in \RC^\infty(\Omega;[a,b])$ be such that $h\big|_{\del \Omega} \equiv R$. Let $(r_m)$ be a sequence of positive numbers converging to $r$ and $(h_m : m \in \N)$ be a sequence in $\RC^\infty(\Omega;[a,b])$ such that
    $h_m \big|_{\del \Omega} \equiv r_m$, and such that $h_m \to h$ in $\RL^2(\Omega)$. Then, for all $\lambda \in [0,\infty)$ and $k \in \N$, 
    \begin{equation}
     \lim_{m \to \infty}   \sigma_{\lambda,k}(h_m) = \frac{R^n}{r^n} \sigma_{\lambda,k}(h).
    \end{equation}
\end{proposition}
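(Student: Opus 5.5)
The proof rests on one observation. For a fixed test function $f\in \RW^{1,2}(\Omega)$, the Rayleigh quotient $\CR_{\lambda,h_m}(f)$ converges to $\frac{R^n}{r^n}\CR_{\lambda,h}(f)$. The two parts of the quotient behave differently:
\begin{itemize}
\item In the interior, $h_m\to h$ in $\RL^2(\Omega)$ and all functions take values in $[a,b]$. Hence $h_m^{s}\to h^{s}$ in every $\RL^p(\Omega)$, $p<\infty$, for each exponent $s$. Up to a subsequence this convergence also holds a.e. with uniform bounds.
\item On the boundary, the weight is the constant $r_m^n$, while for $h$ it is $R^n$. So $\int_{\partial\Omega}f^2h_m^n\dA = r_m^n\int_{\partial\Omega}f^2\dA$ and $\int_{\partial\Omega}f^2h^n\dA=R^n\int_{\partial\Omega}f^2\dA$.
\end{itemize}
The boundary values thus do not pass to the $\RL^2$ limit, and this mismatch produces the factor $R^n/r^n$. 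I would prove the claimed limit through an upper bound on the $\limsup$ and a lower bound on the $\liminf$, both via the min--max characterisation~\eqref{eq:minmaxcharseparated}.

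\emph{Upper bound.} Let $E=\spn\{f_{\lambda,0}(h),\dots,f_{\lambda,k}(h)\}$, which is a fixed $(k+1)$-dimensional subspace of smooth functions.
\begin{itemize}
\item The numerator forms $\int_\Omega h_m^n|\nabla f|^2+\lambda h_m^{n-2}f^2\dV$ converge to the corresponding forms for $h$. This uses dominated convergence, with $|\nabla f|^2$ and $f^2$ bounded on $\Omega$.
\item The denominator forms equal $r_m^n$ times a fixed positive definite form on $E$.
\item Since $E$ is finite-dimensional, the quotients converge uniformly on the unit sphere of $E$.
\end{itemize}
Therefore
\begin{equation}
\limsup_{m\to\infty}\sigma_{\lambda,k}(h_m)\le \max_{E}\frac{R^n}{r^n}\CR_{\lambda,h}=\frac{R^n}{r^n}\sigma_{\lambda,k}(h).
\end{equation}

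\emph{Lower bound.} This is the main step. Pass to a subsequence realising $\liminf_m\sigma_{\lambda,k}(h_m)$, which is finite by the upper bound.
\begin{itemize}
\item Take eigenfunctions $f_{m,0},\dots,f_{m,k}$ of the problem with weight $h_m$, orthonormal in $\RL^2(\partial\Omega)$. The boundary weight is constant, so this is just orthonormality for the unweighted trace.
\item Since $h_m\ge a$ and the eigenvalues are bounded, $\int_\Omega|\nabla f_{m,i}|^2\dV$ is bounded. Together with the unit boundary norm, this bounds $\|f_{m,i}\|_{\RW^{1,2}}$ by a standard Poincaré/trace-type inequality.
\item Extract a further subsequence with $f_{m,i}\wk f_i$ weakly in $\RW^{1,2}$, strongly in $\RL^2(\Omega)$, and strongly in $\RL^2(\partial\Omega)$ by compactness of the trace. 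The $f_i$ are then orthonormal in $\RL^2(\partial\Omega)$ and span a $(k+1)$-dimensional space $F$.
\end{itemize}
Now fix coefficients $c$ and set $g_m=\sum c_if_{m,i}$, with limit $g=\sum c_i f_i$. By orthogonality,
\begin{equation}
\int_\Omega h_m^n|\nabla g_m|^2+\lambda h_m^{n-2}g_m^2\dV\le \sigma_{\lambda,k}(h_m)\,r_m^n\int_{\partial\Omega}g_m^2\dA.
\end{equation}
We pass to the limit term by term:
\begin{itemize}
\item $h_m^{n/2}\nabla g_m\wk h^{n/2}\nabla g$ weakly in $\RL^2$. Indeed, for $\phi\in \RL^2$, $h_m^{n/2}\phi\to h^{n/2}\phi$ strongly in $\RL^2$ by dominated convergence along an a.e.-convergent subsequence. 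Weak lower semicontinuity of the norm then gives $\int_\Omega h^n|\nabla g|^2\dV\le\liminf_m\int_\Omega h_m^n|\nabla g_m|^2\dV$.
\item The zero-order term converges, since $g_m\to g$ strongly and $h_m^{n-2}$ is bounded and converges a.e.
\item The right-hand side converges to $\bigl(\liminf_m \sigma_{\lambda,k}(h_m)\bigr)\, r^n\int_{\partial\Omega}g^2\dA$.
\end{itemize}
Dividing by $R^n\int_{\partial\Omega}g^2\dA$ gives $\CR_{\lambda,h}(g)\le \frac{r^n}{R^n}\liminf_m\sigma_{\lambda,k}(h_m)$ for every $g\in F$. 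Then~\eqref{eq:minmaxcharseparated} yields
\begin{equation}
\sigma_{\lambda,k}(h)\le \frac{r^n}{R^n}\liminf_{m\to\infty}\sigma_{\lambda,k}(h_m).
\end{equation}

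Combining the two bounds gives the claim along every subsequence, hence for the full sequence. When $d=1$ the same argument works verbatim, with only finitely many $k$ relevant. I expect the main obstacle to be the lower-semicontinuity step, namely controlling $h_m^{n/2}\nabla g_m$ when $h_m$ converges only in $\RL^2$ and not uniformly. The uniform bounds $a\le h_m\le b$ are exactly what makes the weak convergence argument above go through.
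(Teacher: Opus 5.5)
Your proof is correct, and it takes a different route from the paper for the lower bound. The upper bound matches the paper's Step 1: the same fixed trial space, and the same use of the exact factor $r_m^n/R^n$ in the boundary form.

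For the reverse inequality, the paper uses a three-step compactness scheme. It extracts a weak $\RW^{1,2}$ limit of a single normalised eigenfunction $\ukm$. It then passes to the limit in the weak equation \eqref{eq:weakproblemukm} against smooth test functions, which shows the limit is an eigenfunction of the $h$-problem with eigenvalue $\frac{r^n}{R^n}\lim_m\sikm$. Finally, it fixes the index by induction on $k$, using that $\RL^2(\del\Omega)$-orthogonality to the previously obtained limits survives.

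You instead take $f_{m,0},\dots,f_{m,k}$ together and use only two facts. First, their limits stay orthonormal on $\del\Omega$. Second, the weighted Dirichlet energy is weakly lower semicontinuous, via $h_m^{n/2}\nabla g_m\wk h^{n/2}\nabla g$. This lets you test the limit space $F$ directly in the min--max for $h$. This is the standard liminf argument, and it has three advantages:
\begin{itemize}
\item it needs only an inequality between quadratic forms, not convergence of the bilinear forms;
\item it never has to identify the limits as eigenfunctions;
\item it handles multiple eigenvalues with no bookkeeping, whereas the paper's inductive step, written as $u_\ell^{(m)}\to u_{\lambda,\ell}(h)$, tacitly chooses eigenfunctions inside each eigenspace.
\end{itemize}

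The paper's route buys extra information: subsequential convergence of eigenfunctions to eigenfunctions of the limit problem. It also provides a template reused for Proposition~\ref{prop:homogenisation}. There $h_m^{-1}$ converges only weakly-$*$ to $h^{-1}+B$ and the eigenfunctions need correctors. A liminf inequality for the energies is then no longer immediate, so deriving the limit equation is the natural way to proceed.
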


\begin{proof}
    Fix $\lambda\geq 0$ and denote $\sikm = \sigma_{\lambda,k}(h_m)$ with associated eigenfunction $\ukm$ normalised to $\|\ukm\|_{\RL^2(\del\Omega)} = 1$,  and satisfying the weak equation
\begin{equation}\label{eq:weakproblemukm}
\forall \phi\in \RW^{1,2}(\Omega), \qquad    \int_{\Omega}h_m^n\nabla\ukm\cdot\nabla\phi+\lambda h_m^{n-2}\ukm\phi\dV=\sikm\int_{\partial\Omega}h_m^n\ukm\phi\dA.
\end{equation}
We aim to show convergence of $\ukm$ to some function $\hat{u}_k\in \RW^{1,2}(\Omega)$ which satisfy the weak formulation \eqref{eq:weakproblem} for $\sigma=\frac{R^n}{r^n} \sigma_{\lambda,k}(h)$:
\begin{equation}\label{eq:weaklimitproblem}
    \forall \phi \in \RW^{1,2}(\Omega), \qquad \int_\Omega h^n \nabla \hat{u}_k \cdot \nabla \phi + \lambda h^{n-2} \hat{u}_k \phi \dV = \frac{R^n}{r^n}\sigma_{\lambda,k}(h) \int_{\del \Omega} h^n \hat{u}_k \phi \dA.
\end{equation}
    We proceed in three steps. While it would be conceivable that a more direct proof exists for this proposition, these steps mirror those of the proof 
    of the more technically involved Proposition \ref{prop:homogenisation}, and can be used as a blueprint for that one. 

    \smallskip
    \textbf{Step 1: Existence of limits.}
    Let $E$ be the span of the eigenfunctions associated with $\sigma_{\lambda,0}(h),\dotsc, \sigma_{\lambda,k}(h)$. Using $E$ as a trial space in the characterisation \eqref{eq:minmaxcharseparated} for $\sikm$, we see that
    \begin{equation}
    \label{eq:uppersemicontinuity}
    \begin{aligned}
        \sikm &\le \max_{f \in E \setminus \set 0} 
        \frac{\int_\Omega h_m^n |\nabla f|^2 + \lambda h_m^{n-2}f^2 \dV}{\int_{\del\Omega} f^2 r^n_m \dA} \\
        &\le \frac{R^n}{r^n_m} \sigma_{\lambda,k}(h) + \max_{f \in E \setminus \set 0} 
        \frac{\int_\Omega (h_m^n - h^n) |\nabla f|^2 + \lambda (h_m^{n-2} - h^{n-2})f^2 \dV}{\int_{\del\Omega} f^2 r^n_m \dA}.
        \end{aligned}
    \end{equation}
    On the interval $[a,b]$ the function $t \mapsto t^s$ is Lipschitz for any $s \in \R$, so that $h_m^s \to h^s$ in $\RL^2$. Hence, because $E$ is a finite dimensional subspace of $\RC^\infty(\Omega)$,
\eqref{eq:uppersemicontinuity} implies
\begin{equation}
    \sikm \le \frac{R^n}{r^n} \sigma_{\lambda,k}(h) + \smallo 1.
\end{equation}

In particular, $\sikm$ remains bounded as $m \to \infty$.
Now, using $\phi=u_k^{m}$ as a trial function in \eqref{eq:weakproblemukm}, we see that
    \begin{equation}\label{eq:weakukm}
    \begin{aligned}
        \int_\Omega |\nabla \ukm|^2 \dV &\le \|h_m^{-n}\|_{\RL^\infty} (\int_\Omega h_m^n |\nabla \ukm|^2 + \lambda_k h_m^{n-2}{\ukm}^2\dV)\\
        &= \|h_m^{-n}\|_{\RL^\infty} r_m^n \sikm\\
        &\le (\frac{b}{a})^{n} \sikm.
    \end{aligned}
    \end{equation}
    This implies that $\ukm$ remains bounded in $\RW^{1,2}(\Omega)$, and up to subsequences, there are $\hat{\sigma}_k \in \R$ and $\hat{u}_k \in \RW^{1,2}(\Omega)$ so that $\sikm \to \hat{\sigma}_k$ and $\ukm \wk \hat{u}_k$ weakly in $\RW^{1,2}$. 

\smallskip
\textbf{Step 2: Limits solve the eigenvalue equation.}
    We now show that $\hat{\sigma}_k$ and $\hat{u}_k$ are weak eigenpairs for \eqref{eq:weaklimitproblem}. Since $h_m$ 
    (and $h_m^{-1}$ if $n = 1$) are uniformly bounded in $\RL^\infty$, the linear functionals
\begin{equation}
    \phi \mapsto \int_\Omega h_m^n \nabla \ukm \cdot \nabla \phi + h_m^{n-2} \ukm \phi \dV
\end{equation}
are uniformly bounded in $\RW^{1,2}(\Omega)^*$ with respect to $m$. This means that we can compute the limits in the weak formulation \eqref{eq:weakproblemukm} on the dense subspace of $\RC^\infty(\Omega)\subset\RW^{1,2}(\Omega)$.
We start with convergence of the gradient term. For any $\phi \in \RC^\infty(\Omega)$, 
\begin{equation}\label{eq:weaka}
    \int_\Omega h_m^n \nabla \ukm \cdot \nabla \phi - h^n \nabla \hat{u}_k\cdot \nabla \phi \dV = \int_\Omega (h_m^n - h^n) \nabla \ukm \cdot \nabla \phi
    + h^{n} (\nabla \ukm -\nabla \hat{u}_k) \nabla \phi \dV.
\end{equation}
The second term under the integral goes to zero by the weak convergence  $\nabla \ukm \wk \nabla \hat{u}_k$ in $\RL^2$.
In particular,
\begin{equation}\label{eq:weakb}
   \abs{\int_\Omega (h_m^n - h^n) \nabla \ukm \cdot \nabla \phi \dV} \le \|h_m^n - h^n\|_{\RL^2} \|\ukm\|_{\RW^{1,2}} \|\phi\|_{\RC^1} \xrightarrow{m \to \infty} 0,
\end{equation}
where we have again used that $h_m^s\to h^s$ in $\RL^2$, this time for $s=n$.
We turn our attention to the interior $\RL^2$ norm term. This time, since $\ukm \to \hat{u}_k$ strongly in $\RL^2$,
\begin{equation}\label{eq:weakc}
\begin{aligned}
    \abs{\int_\Omega h_m^{n-2} \ukm \phi - h^{n-2} \hat{u}_k \phi \dV} &= 
    \abs{\int_\Omega (h_m^{n-2} - h^{n-2}) \ukm \phi + h^{n-2} (\ukm - \hat{u}_k) \phi \dV} \\
    &\le \|h_m^{n-2} - h^{n-2}\|_{\RL^2} \|\ukm\|_{\RL^2} \|\phi\|_{\RL^\infty} + \\&\qquad+
     \|h^{n-2}\|_{\RL^2} \|\ukm - \hat{u}_k\|_{\RL^2} \|\phi\|_{\RL^\infty} 
    \\&\xrightarrow{m \to \infty } 0.
    \end{aligned}
\end{equation}
Finally, by weak convergence $\ukm \wk \hat{u}_k$ in $\RW^{1,2}$ and compactness of the boundary trace operator,
\begin{equation}\label{eq:weakd}
    \int_{\del \Omega} (\ukm - \hat{u}_k) \phi \dA \xrightarrow{m \to \infty} 0.
\end{equation}

Inserting \eqref{eq:weakb}, \eqref{eq:weakc}, and \eqref{eq:weakd} in \eqref{eq:weaka}, we see that the pair $(\hat{u}_k,r^nR^{-n} \hat{\sigma}_k)$ satisfies \eqref{eq:weaklimitproblem}, as we wanted

\smallskip
\textbf{Step 3: Limits are non-trivial eigenpairs of the correct index.} We need to prove that $\hat{\sigma}_k = \sigma_{\lambda,k}(h)$, which also implies that $\hat{u}_k$ is an associated eigenfunction. We proceed by induction on $k\in\N$. First, we can read from \eqref{eq:uppersemicontinuity} that $\sigma_0^{(m)} \le \frac{R^n}{r^n}\sigma_{\lambda,0}(h) + o(1)$ so that by the variational characterisation $\sigma_0^{(m)} \to\frac{R^n}{r^n}\sigma_{\lambda,0}(h)$. Suppose that $\sigma_\ell^{(m)} \to \frac{R^n}{r^n}\sigma_{\lambda,\ell}(h)$ for $1 \le \ell \le k-1$, so that also $u_\ell^{(m)} \to u_{\lambda,\ell}(h)$ strongly in $\RL^2(\del \Omega)$. Since the convergence is strong, we have that
\begin{equation}
    \int_{\del \Omega} \hat{u}_k u_{\lambda,\ell}(h) \dA = \lim_{m \to \infty} \int_{\del \Omega} \ukm u_\ell^{(m)} \de A = 0.
\end{equation}
Since $\hat{u}_k$ is orthogonal in $\RL^2(\del \Omega)$ to $u_{\lambda,0}(h),\dotsc,u_{\lambda,k-1}(h)$, we have that $\hat{\sigma}_k \ge \frac{R^n}{r^n}\sigma_{\lambda,k}(h)$. In turn, by \eqref{eq:uppersemicontinuity}, $\hat{\sigma}_k \le \frac{R^n}{r^n}\sigma_{\lambda,k}(h) + \smallo 1$ so that they are equal.
\end{proof}

\subsection{Control via homogenisation for fibres of dimension \texorpdfstring{$1$}{1}}

For fibres of dimension $1$, we show that by considering oscillating metrics,
the Steklov eigenvalues for a warped product can approximate those of a
Dirichlet-to-Neumann map associated with an auxiliary operator corresponding to the Laplacian on the fibres $\Sigma$ multiplied by a positive function $\frac{B}{h}$ on the base. 
More precisely, let $B : \Omega \to [0,\infty)$ be a smooth nonnegative function, and consider the spectral problem %\footnote{Signe du Laplacien.}
    \begin{equation}
    \label{eq:spectralwithB}
        \begin{cases}
            (\Delta + \frac{B}{h} \Delta_\Sigma ) u = 0 & \text{in }M_h, \\
            \del_\nu u = \sigma u & \text{on } \del M_h.
        \end{cases}
    \end{equation}
    The spectrum of \eqref{eq:spectralwithB} is discrete, we write it as $\{\sigma_k(M_h,B)\}_{k\geq 0}$.
    Separation of variables leads to the auxiliary problems
    \begin{equation}
        \label{eq:separationpotential}
        \begin{cases}
            L_h f + \lambda_j ( h^{-1}  + B) f = 0 & \text{in } \Omega, \\
            \del_\nu u = \sigma u & \text{on } \del \Omega.
        \end{cases}
    \end{equation}
    The spectrum of \eqref{eq:separationpotential} is also discrete and is denoted
    $\{\sigma_{\lambda_j,\ell}(h,B): j, \ell \ge 0\}$, where $\{\lambda_j : j \ge 0\}$ is still the spectrum of the Laplacian on the fibre $\Sigma$. Similarly to Proposition \ref{prop:separation}, the spectrum of \eqref{eq:spectralwithB} splits in families:
    \begin{equation}
        \{\sigma_k(M_h,B) : k \ge 0\}=\{\sigma_{\lambda_j,\ell}(h,B)\,:\,j,\ell\geq 0\}.
    \end{equation}
    The eigenvalues $\sigma_{\lambda_j,\ell}(h,B)$ are the successive minima for the Rayleigh quotient
    \begin{equation}\label{eq:rayleighquotienthB}
        \CR_{\lambda_j,h,B}(f) = \frac{\int_\Omega h |\nabla f|^2 + \lambda_j(h^{-1} + B)
            f^2 
        \dV}{\int_{\del \Omega} h f^2 \dA},
    \end{equation}
    they satisfy the corresponding weak formulation:
    \begin{equation}\label{eq:weakproblemhB}
 \forall\phi\in \RW^{1,2}(\Omega), \quad   \int_{\Omega}h\nabla f\cdot\nabla\phi+\lambda (h^{-1}+B)f\phi\dV=\sigma\int_{\partial\Omega}hf\phi\dA.
\end{equation}
    \begin{rem}
        The auxiliary problems \eqref{eq:spectralwithB} and \eqref{eq:separationpotential} still make sense with $B \in \RL^{\frac d 2}(\Omega)$, and can also be approached via homogenisation, see for instance the constructions in \cite{Lo2025}. To simplify the presentation, we choose to keep $B$ smooth as it is sufficient for our purposes.
    \end{rem}
    \begin{proposition}
        \label{prop:homogenisation}
        Let $h, B \in \RC^\infty(\Omega;(0,\infty))$. Suppose that the fibre
        $\Sigma$ has dimension $1$ and that the base $\Omega$ has dimension $d\geq 2$. There is a sequence $\{h_m : m \in \N\}$ in
        $\RC^\infty(\Omega)$ of
        warping functions such that $h_m \le h$ everywhere, for all $p \in
        [1,\infty)$, $h_m \to h$ in $\RL^p$, and, for all $j, k$,
        $\sigma_{\lambda_j,k}(h_m) \to \sigma_{\lambda_j,k}(h,B)$ as $m\to\infty$. Furthermore,
        $|\supp(h_m - h)| \to 0$. 
    \end{proposition}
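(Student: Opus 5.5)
We construct $h_m$ by perforating the warping function: we make $h_m$ very small, of size $\eps_m$, on a periodic array of tiny balls, and set $h_m = h$ elsewhere. Since $n=1$, the Rayleigh quotient \eqref{RayleighQuotient} is
\[
\CR_{\lambda,h_m}(f)=\frac{\int_\Omega h_m|\nabla f|^2+\lambda h_m^{-1}f^2\dV}{\int_{\partial\Omega}h_m f^2\dA}.
\]
On the small balls the weight $h_m^{-1}$ is huge. The balls therefore act as soft obstacles, and they generate an effective potential in the limit, in the spirit of the ``strange term coming from nowhere'' of Cioranescu--Murat.

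Concretely, we cover $\Omega$ by a family of disjoint cells of size $\eps$ around points $x_i$, staying a distance $\gg\eps$ from $\partial\Omega$. In each cell we place a ball $B(x_i,r_\eps)$ with $r_\eps \ll \eps$. On that ball we take $h_m=\eta_\eps$, with a smooth transition to $h$ in a thin annulus, and $h_m\le h$ throughout. Then $|\supp(h_m-h)|\lesssim \eps^{-d}r_\eps^d\to0$, and since $h_m\le h$ is bounded, $h_m\to h$ in every $\RL^p$. The parameters $r_\eps,\eta_\eps$ and the local radius are chosen to realise the prescribed density $B(x)$.

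The capacity heuristic fixes the calibration. A function forced to be nearly $0$ on the ball, because $\lambda\eta^{-1}$ penalises $f^2$ there, costs energy about $h(x_i)\,\mathrm{cap}(B_{r})\asymp h(x_i)r^{d-2}$ for $d\ge3$, and $h(x_i)/|\log r|$ for $d=2$. We want the energy cost per unit volume to equal $\lambda B f^2$. Since $\lambda$ varies with $j$, a pure Dirichlet-type obstacle would not produce a term proportional to $\lambda_j$. So we choose $\eta_\eps$ in the intermediate regime, where the zeroth-order cost $\lambda\eta^{-1}f^2|B_r|$ is dominant rather than the capacity cost. This regime requires $\lambda\eta^{-1}r^d\ll h\,r^{d-2}$, that is, $\eta\gg r^2$. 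The inside cost then sums to $\lambda f(x)^2\cdot \eps^{-d}r_\eps^d\,\eta_\eps^{-1}$. We choose $r_\eps(x)^d\eta_\eps^{-1}=B(x)\eps^d|B_1|^{-1}$, locally adapted to $x$, which yields exactly $\lambda B f^2$. The outer piece $h_m^{-1}=h^{-1}$ contributes $\lambda h^{-1}f^2$. The boundary term is unchanged because the holes avoid $\partial\Omega$.

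With the construction fixed, we follow the three steps of Proposition~\ref{prop:continuitybis}.

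\textbf{Step 1 (upper bound).} Take the span $E$ of the first $k+1$ eigenfunctions of \eqref{eq:weakproblemhB}, which are smooth, and use it unchanged as a trial space. By the calibration above, Riemann sums give
\[
\int h_m^{-1}f^2\to\int(h^{-1}+B)f^2 \quad\text{uniformly on }E,
\]
and the gradient terms converge as well. This yields $\limsup\sigma_{\lambda_j,k}(h_m)\le\sigma_{\lambda_j,k}(h,B)$.

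\textbf{Step 2 (weak limits solve the limit equation).} The energy bound controls $\int h_m|\nabla u|^2$. This is only a weighted bound, and it is weak inside the holes, where $h_m=\eta$. Outside the holes, however, $h_m=h\ge a>0$. We therefore extend $u^{(m)}|_{\Omega\setminus\cup B_i}$ harmonically into the balls and obtain $\RW^{1,2}$ bounds, using a standard extension operator for perforated domains with uniform constants. Taking weak limits, we must show for test functions $\phi$ that
\[
\lambda\int_{\cup B_i}\eta^{-1}u^{(m)}\phi\to\lambda\int Bu\phi.
\]
This follows by comparing $u^{(m)}$ on each ball with its average on the surrounding annulus. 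The difference is controlled by Poincaré's inequality together with the bound $\int_{B_i}\eta^{-1}|u^{(m)}|^2\lesssim 1$, plus the regime condition $\eta\gg r^2$, which makes the fluctuation $o(1)$.

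\textbf{Step 3 (correct index).} This is the same induction on $k$ as in Proposition~\ref{prop:continuitybis}, using compactness of the trace and strong $\RL^2(\partial\Omega)$ convergence, which holds because the holes stay away from the boundary.

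The main obstacle is Step 2: proving that the in-hole term converges to the effective potential. This requires tuning $(\eps,r_\eps,\eta_\eps)$ so that the capacity/oscillation errors vanish while $\eps^{-d}r^d/\eta$ stays fixed. It also requires a uniform extension from the perforated domain, which is more delicate in the borderline case $d=2$.
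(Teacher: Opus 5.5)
Your construction is in substance the paper's. You place holes of radius $r$ in cells of size $\eps$, make $h_m^{-1}\approx\eta^{-1}$ on them, and calibrate by $\eps^{-d}r^d\eta^{-1}\approx B$. The paper takes $\eps=m^{-1}$, $r=m^{-\alpha}$ and $h_m^{-1}=h^{-1}+B_m$, so $\eta\asymp m^{-(\alpha-1)d}$, and then runs the same harmonic-extension/corrector decomposition and three steps. Your Steps 1 and 3 are fine.

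The gap is the regime you impose: $\eta\gg r^2$ is not sufficient, and for part of that range the conclusion is false. Your heuristic compares the zeroth-order cost $\lambda_j\eta^{-1}r^d$ only with the \emph{exterior} capacity cost. But inside a hole the gradient is also weighted by $\eta$. There the eigenfunction solves $-\Delta u+\lambda_j\eta^{-2}u=0$, so it can screen itself off within a layer of width $\eta/\sqrt{\lambda_j}$.

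Concretely, suppose $r^2\ll\eta\ll r$. Replace each $f$ in a $(k+1)$-dimensional space of smooth trial functions for $\sigma_{\lambda_j,k}(h)$ by $f(1-\Psi)$, where $\Psi$ drops from $1$ to $0$ across a layer of width $\eta$ inside each ball. The boundary traces are unchanged and the region where $\Psi=1$ costs nothing. The layer costs $\lesssim(1+\lambda_j)r^{d-1}$ per hole, hence $\lesssim\eps^{-d}r^{d-1}\asymp\eta/r\to0$ in total. Thus $\limsup_m\sigma_{\lambda_j,k}(h_m)\le\sigma_{\lambda_j,k}(h)<\sigma_{\lambda_j,k}(h,B)$ whenever $\lambda_j>0$: the effective potential disappears.

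The same defect shows up in your Step 2 estimate. The fluctuation of $u^{(m)}$ inside the holes (the paper's corrector $\phkm$) is controlled only by $\int_{\cup B_i}\eta|\nabla u^{(m)}|^2\lesssim1$. Poincar\'e on balls of radius $r$ then gives $\|u^{(m)}-\bar u\|_{\RL^2(\cup B_i)}^2\lesssim r^2\eta^{-1}$. Since $|\cup B_i|\asymp\eta$, the error $\eta^{-1}\int_{\cup B_i}(u^{(m)}-\bar u)\phi$ is only $\lesssim r/\eta$. The mass bound $\int_{\cup B_i}\eta^{-1}|u^{(m)}|^2\lesssim1$ does not make it small. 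So you need $r\ll\eta$, not $r^2\ll\eta$.

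Your condition $\eta\gg r^2$ controls the other half of the argument: that the concentrated averages of the uniformly $\RW^{1,2}$-bounded harmonic extension converge to $\int B\tilde u\phi$. In the paper this is the estimate on the Neumann correctors $\psi_s^m$, which goes through whenever $\alpha<\frac{d}{d-2}$, i.e.\ exactly $\eta\gg r^2$.

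The fix is to require $r\ll\eta$, which under your calibration means $\eps^{d/(d-1)}\ll r\ll\eps$. This is precisely the paper's choice $\alpha\in(1,\frac{d}{d-1})$, for which the proof of Lemma~\ref{lemma:phkmTm} gives $\|h_m^{-1/2}\phkm\|_{\RL^2(\Omega)}\lesssim m^{\alpha(d-1)-d}\to0$. Note that for $d=2$ your condition imposes no restriction at all, whereas one needs $r\gg\eps^2$.
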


\begin{rem}
The proof of Proposition \ref{prop:homogenisation} relies on an explicit homogenisation construction
of functions $h_m$ so that $h_m\to h$ in $\RL^p$ but for which $h_m^{-1}$ does not converge to $h^{-1}$. It is this discrepancy
that gives us the flexibility in prescribing the limit, but it is also what makes the analysis more involved. 
After a 'Step 0' consisting in the explicit construction of the warping functions $h_m$, we proceed with the same three steps as in the proof of Proposition \ref{prop:continuitybis}, 
identified clearly so that the reader can refer back to it for the structure if they so wish.
\end{rem}
\textbf{Step 0: Construction of the functions $h_m$.}
For $m \in \N$, let
$\BS^m \subset \Omega$ be an $m^{-1}$-separated set that is at distance at least
$m^{-1}$ from $\del \Omega$, and is maximal with respect to both conditions. The Vorono\u{\i} cell associated to $s \in \BS^m$ is
\begin{equation}
 V_s^m=\{x\in\Omega\,:\,d(x,s)\leq d(x,s')\qquad\forall s'\in S^m\}.
\end{equation}
 Fix $\alpha \in (1,\frac{d}{d-1})$. Let
$\omega_s^m$ be the ball centred at $s$ of radius $m^{-\alpha}$ so that $\omega_s^m \subset V_s^m$ for $m$ large enough,
and let $\BT^m := \bigcup_{s \in \BS^m} \omega_s^m$. The cardinality of $\BS^m$ is $\lesssim m^d$, and so 
\begin{equation}\label{eq:volTm}
    |\BT^m| = \sum_{s \in \BS^m} |\omega_s^m| \lesssim m^{(1 - \alpha)d}
    \to 0.
\end{equation}
See \cite[Section 2]{GiLa2021} for properties of this tessellation.
We now define smooth cut-offs for the sets $\omega_s^m$ that assign them the
same weights as $V_s^m$. Concretely, let $0 \not \equiv \chi$ be a smooth non-negative function,
supported on $[-1,1]$, and for $s \in \BS^m$, define $\chi_s^m : \Omega \to
[0,\infty)$ as
\begin{equation}
    \chi_s^m(x) := \frac{|V_s^m| \chi(m^{\alpha} \dist(x,s))}{\int_{\Omega}
    \chi(m^\alpha \dist(x,s)) \dV}.
\end{equation}
We note that $\chi_s^m$ is supported in $\omega_s^m$ and integrates to
$|V_s^m|$. Because $|V_s^m|\asymp m^{-d}$, it follows from a change of variables that
\begin{gather}\label{ineq:LinfinitynormChi}
  \|\chi_s^m\|_{\RL^\infty} \asymp m^{(\alpha - 1)d}.
\end{gather}
We define the potentials
\begin{equation}
    B_m(x) := \sum_{s \in \BS^m} \chi_s^m(x) B(s).
\end{equation}
They are some form of singular Riemann sum for $B_m$, in the sense that they were constructed specifically so that
for every $s \in \BS_m$,
\begin{equation}
\label{eq:riemsumelement}
\int_{V_s^m} B_m \dV = |V_s^m| B(s),
\end{equation} 
in particular $\int_\Omega B_m \de V \xrightarrow{m \to \infty} \int_\Omega B \dV$ and a fortiori the sequence $B_m$ remains bounded in $\RL^1$.
Everything is now in place to define the approximating warping functions
\begin{equation}
    h_m := \frac{h}{1 + h B_m} \quad \text{so that} \quad
    \frac{1}{h_m} = \frac{1}{h} + B_m.
\end{equation}
It follows from \eqref{ineq:LinfinitynormChi} that $\inf h_m\to 0$, as announced.
We already see that $h_m - h$ is supported on $\BT^m$, so that indeed the volume
of its support vanishes in the limit by \eqref{eq:volTm}. Since $h B_m > 0$, we also see that indeed
$h_m \le h$ everywhere. Let us prove our first approximation result.
\begin{lemma}
    \label{lem:firstconvergence}
    For every $p \in [1,\infty)$, $h_m\to h$ in $\RL^p$. Furthermore, $h_m^{-1}
    \dV \wks (h^{-1}+B) \dV$ as measures.
\end{lemma}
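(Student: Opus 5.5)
The $\RL^p$ convergence follows from the pointwise bound $0\le h-h_m\le h$ together with the smallness of the support of $h-h_m$; the weak-$*$ statement reduces, via $h_m^{-1}=h^{-1}+B_m$, to showing $B_m\dV \wks B\dV$.

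\textbf{$\RL^p$ convergence.} Since $h_m\le h$ and $h-h_m$ vanishes outside $\BT^m$,
\begin{equation}
\int_\Omega |h_m-h|^p\dV \le \int_{\BT^m} h^p \dV \le \|h\|_{\RL^\infty}^p |\BT^m| \lesssim m^{(1-\alpha)d}\to 0,
\end{equation}
where the last bound is \eqref{eq:volTm}. This works for every $p\in[1,\infty)$ because $h$ is smooth on the compact set $\Omega$, hence bounded.

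\textbf{Weak-$*$ convergence.} Since $h^{-1}$ is a fixed continuous function, it suffices to show that for every $\phi\in\RC(\Omega)$,
\begin{equation}
\int_\Omega B_m\phi\dV\to\int_\Omega B\phi\dV.
\end{equation}

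1. Split the integral over the Vorono\u{\i} cells. The cells $V_s^m$ cover $\Omega$ up to the boundary strip of width $\lesssim m^{-1}$, by maximality of $\BS^m$. They overlap only on null sets. Moreover $\supp\chi_s^m\subset\omega_s^m\subset V_s^m$.

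2. Compare with the cell values. Fix $s\in\BS^m$. By \eqref{eq:riemsumelement}, and since $\omega_s^m$ has radius $m^{-\alpha}$,
\begin{equation}
\int_{V_s^m} B_m\phi\dV = B(s)\int_{\omega_s^m}\chi_s^m\phi\dV = B(s)|V_s^m|\bigl(\phi(s)+O(\varpi_\phi(m^{-\alpha}))\bigr),
\end{equation}
where $\varpi_\phi$ is the modulus of continuity of $\phi$. Here we used that $\chi_s^m\ge0$ integrates to $|V_s^m|$.

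3. Compare with the limit integrand. The cell $V_s^m$ has diameter $\lesssim m^{-1}$, so by uniform continuity of $B\phi$,
\begin{equation}
\int_{V_s^m}B\phi\dV = |V_s^m|\bigl(B(s)\phi(s)+o(1)\bigr),
\end{equation}
with the $o(1)$ uniform in $s$.

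4. Sum over $s$. Summing the errors from steps 2 and 3 gives a total error of $o(1)\sum_s|V_s^m|\le o(1)|\Omega|$. The uncovered boundary strip contributes $O(m^{-1})$ to $\int_\Omega B\phi\dV$ and nothing to $\int_\Omega B_m\phi\dV$, since $B_m$ vanishes there. Hence $\int_\Omega B_m\phi\dV\to\int_\Omega B\phi\dV$.

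\textbf{Main obstacle.} The only delicate point is the geometry of the tessellation, which we import from \cite[Section 2]{GiLa2021}. We need three facts from it:
\begin{itemize}
\item each cell $V_s^m$ has diameter $O(m^{-1})$;
\item the cells cover $\Omega$ except for a boundary layer of measure $O(m^{-1})$;
\item $\omega_s^m\subset V_s^m$ for $m$ large.
\end{itemize}
The last holds because $m^{-\alpha}<\tfrac12 m^{-1}$ while the points of $\BS^m$ are $m^{-1}$-separated.
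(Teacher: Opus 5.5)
Your proposal is correct and follows the paper's own proof. The $\RL^p$ part uses $h_m\le h$ together with $|\BT^m|\to 0$, and the weak-$*$ part reduces via $h_m^{-1}=h^{-1}+B_m$ to a cell-by-cell comparison with $B(s)\phi(s)|V_s^m|$, using uniform continuity and the mass identity \eqref{eq:riemsumelement}. One harmless slip: with the paper's definition the Vorono\u{\i} cells cover all of $\Omega$ (every point has a nearest point in the finite set $\BS^m$), so your boundary-strip term is simply empty.
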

\begin{proof}
 Because $h_m\le h$, the triangle inequality gives the first convergence statement:
    \begin{equation}
        \int_{\Omega} |h_m - h|^p \dV = \int_{\BT^m} |h_m - h|^p \dV \le 2^p
        \|h\|_{\RL^\infty}^p |\BT^m| \to 0. 
    \end{equation}
    For the second convergence statement, let $f \in \RC^0$. We compute
    \begin{equation}
        \label{eq:wksinverses}
        \int_\Omega (h_m^{-1} - h^{-1} - B) f \dV =  
   \int_\Omega (B_m - B) f  \dV. 
    \end{equation}
  Since $f$ and $B$ are uniformly continuous on $\Omega$, on each cell $V_s^m$ we have $B(x) = B(s) + o(1)$ and $f(x) = f(s) + o(1)$, where the $o(1)$ term is independent of $s$. It follows that
  \begin{align*}
    \left|\int_{V_s^m} (B_m-B)f\dV\right|&=
  \left|\int_{V_s^m}(B_m-B(s) + o(1))(f(s)+o(1))\,\dV\right|\\
  &\le
  \left|\int_{V_s^m}(B_m-B(s))f(s) \dV\right| + \int_{V_s^m}o(1) (|B_m| + |B(s)| + |f(s)|) \dV.
  \end{align*}
  The first term is 0 by \eqref{eq:riemsumelement}. For the second, we sum over $s \in \BS^m$ to see
  \begin{align*}
    \sum_{s \in \BS^m}\Bigg|\int_{V_s^m}(B_m -B )f\dV\Bigg|&\leq
 o(1) \Bigg( \int_\Omega |B_m| \dV + \sum_{s \in \BS^m}(|B(s)| + |f(s)|)|V_s^m| \Bigg) 
 \end{align*}
 which converges to $0$, since $B_m$ is a bounded sequence in $\RL^1$.
\end{proof}

\textbf{Step 1: Existence of limits.} 
As in the proof of Proposition \ref{prop:continuitybis}, we now prove the existence of limits to the eigenpairs associated with $h_m$. However, the
more singular situation requires more careful considerations. To make notation more fluid, we now denote by $\sikm$ and $\ukm$ the eigenvalues
and eigenfunctions for Problem \eqref{problem:auxiliary} associated with the warping function $h_m$ and the fibre
eigenvalue $\lambda_j$, i.e. 
\begin{equation}
\sikm :=
\sigma_{\lambda_j,k}(h_m).
\end{equation}
The eigenfunctions are normalised so that
\begin{equation}
\int_{\del \Omega} (\ukm)^2 h\dA = 1. 
\end{equation}

We first provide some a priori estimates. 
\begin{lemma}
    \label{lem:apriori}
    For each fixed index $j,k$, the following bounds hold.
    \begin{enumerate}
        \item The sequence of eigenvalues $\sikm$ is bounded. In fact, it is upper semi-continuous:
        \begin{equation}
            \sikm \le
            \sigma_{\lambda_j,k}(h,B) + o(1).
        \end{equation}
        \item The sequence of norms of weighted gradients $\|h_m^{1/2} \nabla
            \ukm\|_{\RL^2(\Omega)}$ is bounded.
            \item The sequence of norms of gradients on the complement of $\BT^m$,
            $\|\nabla \ukm \|_{\RL^2(\Omega \setminus \BT^m)}$, is bounded. 
            \item The sequence of norms of gradients on $\BT^m$ may not be bounded, but it satisfies the bound
            \begin{equation}
                \|\nabla \ukm \|_{\RL^2(\BT^m)} \lesssim m^{\frac{(\alpha - 1)d}{2}}.
            \end{equation}
    \end{enumerate}
\end{lemma}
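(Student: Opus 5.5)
We prove the four items in order; each builds on the previous one.

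\textbf{Item (1).} Let $E$ be the span of the first $k+1$ eigenfunctions of the limit problem \eqref{eq:weakproblemhB} for $\lambda=\lambda_j$. These are smooth on $\Omega$. We use $E$ as a trial space in \eqref{eq:minmaxcharseparated} for $h_m$, where now $n=1$. The Rayleigh quotient for $h_m$ is
\[
\frac{\int_\Omega h_m|\nabla f|^2+\lambda_j h_m^{-1}f^2\,\dV}{\int_{\del\Omega}h_m f^2\,\dA}.
\]
Since $\BT^m$ stays at distance at least $m^{-1}-m^{-\alpha}>0$ from $\del\Omega$, we have $h_m=h$ on $\del\Omega$, so the denominator is unchanged. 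In the numerator:
\begin{itemize}
\item[(a)] Because $h_m\to h$ in $\RL^1$ and $|\nabla f|^2$ is bounded uniformly on the unit sphere of $E$ (a finite-dimensional space), we get $\int h_m|\nabla f|^2\to\int h|\nabla f|^2$.
\item[(b)] By Lemma~\ref{lem:firstconvergence}, $h_m^{-1}\dV\wks(h^{-1}+B)\dV$. Testing against the continuous functions $f^2$, and using that the unit sphere of $E$ is compact in $\RC^0$, gives $\int h_m^{-1}f^2\to\int(h^{-1}+B)f^2$ uniformly on that sphere.
\end{itemize}
Taking the maximum over $E$ then yields $\sikm\le\sigma_{\lambda_j,k}(h,B)+o(1)$.

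\textbf{Item (2).} We test the weak equation \eqref{eq:weakproblem} for $h_m$ against $\ukm$ itself. With the normalisation $\int_{\del\Omega}(\ukm)^2h\,\dA=1$, this gives
\[
\int_\Omega h_m|\nabla\ukm|^2+\lambda_j h_m^{-1}(\ukm)^2\,\dV=\sikm .
\]
The right-hand side is bounded by item (1), and both terms on the left are nonnegative. Hence $\|h_m^{1/2}\nabla\ukm\|_{\RL^2}^2\le\sikm$ is bounded. As a by-product, $\|h_m^{-1/2}\ukm\|_{\RL^2}$ is bounded when $j\ge1$.

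\textbf{Item (3).} On $\Omega\setminus\BT^m$ we have $h_m=h\ge\min h>0$. Therefore
\[
\|\nabla\ukm\|_{\RL^2(\Omega\setminus\BT^m)}^2\le(\min h)^{-1}\|h_m^{1/2}\nabla\ukm\|^2_{\RL^2},
\]
which is bounded by item (2).

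\textbf{Item (4).} This is the only place where the degeneration of $h_m$ matters; the difficulty is bounding $\inf h_m$ from below quantitatively. From $h_m^{-1}=h^{-1}+B_m$ and \eqref{ineq:LinfinitynormChi},
\[
\|h_m^{-1}\|_{\RL^\infty}\le\|h^{-1}\|_{\RL^\infty}+\|B\|_{\RL^\infty}\max_s\|\chi_s^m\|_{\RL^\infty}\lesssim m^{(\alpha-1)d}.
\]
Here we use that the cells $\omega_s^m$ are disjoint, so at each point at most one $\chi_s^m$ is nonzero. Consequently
\[
\|\nabla\ukm\|^2_{\RL^2(\BT^m)}\le\|h_m^{-1}\|_{\RL^\infty}\,\|h_m^{1/2}\nabla\ukm\|^2_{\RL^2}\lesssim m^{(\alpha-1)d}.
\]
Taking square roots gives the stated bound $m^{\frac{(\alpha-1)d}{2}}$.
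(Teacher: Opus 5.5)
Your proposal is correct and follows the paper's proof essentially step for step: the same trial space spanned by the first $k+1$ limit eigenfunctions, combined with Lemma~\ref{lem:firstconvergence}, for (1); the eigenfunction tested in its own weak equation for (2); and the pointwise facts $h_m = h$ off $\BT^m$ and $h_m \gtrsim m^{(1-\alpha)d}$ on $\BT^m$ for (3) and (4). The only differences are cosmetic. In (1) the paper bounds the gradient term by the pointwise inequality $h_m \le h$ instead of using $\RL^1$ convergence, which avoids needing $C^1$ bounds on the trial functions. You, for your part, spell out the lower bound on $h_m$ (via disjointness of the $\omega_s^m$ and \eqref{ineq:LinfinitynormChi}), which the paper only states as an observation.
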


\begin{proof}
Points (3) and (4) follow directly from (2) along with the observation that $h_m$ is bounded from below
on $\Omega \setminus \BT^m$ and can be as small as $m^{(1 - \alpha)d}$ on $\BT^m$. 
   
    We show boundedness of the sequence of eigenvalues and proceed as in \eqref{eq:uppersemicontinuity}. Let $E$ be the span of the first
   $k+1$ eigenfunctions for Problem \eqref{eq:separationpotential} with
    $\lambda_j$.  Let $SE=\{f\in E\,:\,\int_{\partial\Omega} hf^2  \dA=1\}$. 
    Using that $h_m\leq h$, it follows from the variational characterisation of $\sikm$ that
    \begin{equation}
        \begin{aligned}
            \sikm &\le \sup_{f \in SE}  \int_\Omega h_m |\nabla f|^2 + \lambda_j h_m^{-1} f^2 \dV \\
            &\le \sigma_{\lambda_j,k}(h,B) + \sup_{f \in SE}  \lambda_j\left|\int_\Omega(h_m^{-1} - h^{-1} - B)f^2 \dV\right|.
    \end{aligned}
    \end{equation}
    Because $E$ is a fixed finite dimensional subspace, Lemma
    \ref{lem:firstconvergence} tells us that the last supremum converges to $0$,
    providing the bound on the eigenvalues.

    We now insert $\ukm$ in its own variational characterisation to see that
    \begin{equation}
        \int_\Omega h_m |\nabla \ukm|^2 +  \lambda_j h_m^{-1} (\ukm)^2 \dV = \sikm.
    \end{equation}
    This directly implies that $\|h_m^{1/2} \nabla \ukm\|_{\RL^2(\Omega)}$ is a bounded sequence for any $\lambda_j \in [0,\infty)$. 
\end{proof}
The previous a priori estimates already give us a limit (up to subsequences) $\hat \sigma_k$ for $\sikm$. The sequence $\ukm$ remains too singular to study directly, to this end we define another
sequence $\utkm$ that has better convergence properties and whose distance to $\ukm$ in $\RL^2$ goes to $0$ as $m \to \infty$.
\begin{definition}
    For each $m \in \N$, let $H_m : \RW^{1,2}(\Omega \setminus \BT^m) \to \RW^{1,2}(\Omega)$ be the harmonic extension operator. We define the approximating sequence
    \begin{equation}
        \utkm := H_m(\ukm \boldsymbol{1}_{\Omega \setminus \BT^m})
    \end{equation}
    and the corrector $\phkm$, supported on $\BT^m$, given by
    \begin{equation}\label{eq:defcorrectors}
        \phkm := \ukm - \utkm \in \RW^{1,2}_0(\BT^m).
    \end{equation}
\end{definition}

\begin{lemma}
\label{lem:unifboundutkm}
    The approximating sequence $\utkm$ is bounded in $\RW^{1,2}(\Omega)$, and the corrector sequence $\phkm$ enjoys the bound
    \begin{equation}
    \label{ineq:gradphik}
        \|\nabla \phkm\|_{\RL^2(\BT^m)} \lesssim m^{\frac{(\alpha - 1)d}{2}}.
    \end{equation}
\end{lemma}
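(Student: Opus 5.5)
The plan is to exploit the structure of $\BT^m$: it is a disjoint union of small balls $\omega_s^m$ of radius $m^{-\alpha}$, each sitting well inside its Vorono\u{\i} cell $V_s^m$. The uniform constants will come from a scaling argument on annuli around each ball.

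\textbf{Step 1: the corrector bound.} This follows first. Since $\utkm$ is the harmonic extension of $\ukm\boldsymbol{1}_{\Omega\setminus\BT^m}$ into each ball $\omega_s^m$, it minimises the Dirichlet energy on $\omega_s^m$ among functions with the same trace on $\partial\omega_s^m$. The function $\ukm$ itself is such a competitor. Hence
\[
\|\nabla\utkm\|_{\RL^2(\BT^m)}\le\|\nabla\ukm\|_{\RL^2(\BT^m)} \lesssim m^{\frac{(\alpha-1)d}{2}},
\]
by Lemma \ref{lem:apriori}(4). The triangle inequality then gives \eqref{ineq:gradphik} for $\phkm=\ukm-\utkm$.

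\textbf{Step 2: uniform $\RW^{1,2}$ bound on $\utkm$.} For this we need a better estimate of the energy of $\utkm$ inside the holes, one that does not use the interior of $\omega_s^m$ at all. Fix $s$. Let $A_s^m$ be the annulus $B(s,2m^{-\alpha})\setminus\omega_s^m$, which lies in $V_s^m\setminus\BT^m$ for $m$ large. By rescaling $A_s^m$ to the fixed annulus $B(0,2)\setminus B(0,1)$, where the metric is uniformly close to Euclidean, we obtain an extension operator from the annulus into the ball. Apply it to $\ukm-c_s$, where $c_s$ is the mean of $\ukm$ on $A_s^m$. Its Dirichlet energy on $\omega_s^m$ is bounded by $C\|\nabla\ukm\|^2_{\RL^2(A_s^m)}$, by the Poincar\'e inequality on the rescaled annulus. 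The point is that the constant $C$ is scale invariant, because the Dirichlet energy and the Poincar\'e constant scale compatibly. Using the minimality of the harmonic extension and summing over $s$,
\[
\|\nabla\utkm\|^2_{\RL^2(\BT^m)}\le C\sum_s\|\nabla\ukm\|^2_{\RL^2(A_s^m)}\le C\|\nabla\ukm\|^2_{\RL^2(\Omega\setminus\BT^m)},
\]
which is bounded by Lemma \ref{lem:apriori}(3). Together with (3) itself, this bounds $\|\nabla\utkm\|_{\RL^2(\Omega)}$.

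\textbf{Step 3: the $\RL^2$ part.} The boundary normalisation $\int_{\partial\Omega}(\ukm)^2h\dA=1$ holds, $h$ is bounded below, and $\utkm=\ukm$ near $\partial\Omega$ because the holes are at distance at least $m^{-1}$ from the boundary. So the trace of $\utkm$ is bounded in $\RL^2(\partial\Omega)$. The Poincar\'e-type inequality $\|f\|_{\RL^2(\Omega)}\lesssim\|\nabla f\|_{\RL^2(\Omega)}+\|f\|_{\RL^2(\partial\Omega)}$ then bounds $\|\utkm\|_{\RL^2(\Omega)}$.

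The main obstacle is Step 2, specifically the uniformity of the annular extension constant in $m$ and $s$. I would handle it through the rescaling to a fixed annulus, together with the smooth comparison of the metric with the Euclidean one in normal coordinates at scale $m^{-\alpha}$. This comparison is uniform because $\Omega$ is compact and $\BS^m$ stays away from $\partial\Omega$.
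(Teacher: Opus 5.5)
Your proposal is correct and follows the same structure as the paper's proof. Both arguments use a uniform gradient bound for the harmonic extension in terms of $\|\nabla \ukm\|_{\RL^2(\Omega\setminus\BT^m)}$, then the boundary trace normalisation with equivalence of norms on $\RW^{1,2}(\Omega)$, then the triangle inequality for $\phkm$. The paper cites \cite{GiLa2021} and \cite{RaTa1975} for the uniform extension estimate, whereas you prove it by rescaling annuli around each hole; the Poincar\'e--Wirtinger step there relies on the annulus being connected, i.e.\ on $d\ge 2$. You also get the corrector bound directly from the Dirichlet principle $\|\nabla\utkm\|_{\RL^2(\BT^m)}\le\|\nabla\ukm\|_{\RL^2(\BT^m)}$, rather than from the uniform bound on $\utkm$.
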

\begin{proof}
The harmonic extension operators are uniformly bounded, see
\cite[Lemma 3.4]{GiLa2021}. More precisely, the $\RL^2$ norm of the
gradient of the extension $\utkm$ is bounded in terms of the $\RL^2$ norm
of the gradient of $\ukm$ on $\Omega\setminus\BT^m$
\cite[p. 40, Example 1]{RaTa1975}. Hence, by Lemma
\ref{lem:apriori}(3),
$$\|\nabla \utkm\|_{\RL^2(\Omega)}
    \lesssim
    \|\nabla \ukm\|_{\RL^2(\Omega\setminus\BT^m)}
    \lesssim 1.$$
Moreover, $\utkm$ and $\ukm$ have the same trace on $\del\Omega$, so that
$$\int_{\del\Omega} h(\utkm)^2\,\dA = 1.$$
Since $h$ is positive and bounded away from zero on $\del\Omega$, it
follows that $\|\utkm\|_{\RL^2(\del\Omega)}$ is uniformly bounded.
By equivalence of norms on $\RW^{1,2}(\Omega)$, we therefore obtain
\begin{equation}\label{eq:uniformW12utkm}
    \|\utkm\|_{\RW^{1,2}(\Omega)} \lesssim 1.
\end{equation}
    By the triangle inequality, we can therefore deduce using (4) in Lemma \ref{lem:apriori}, that
    \begin{equation}
        \|\nabla \phkm\|_{\RL^2(\BT^m)} \le \|\nabla \ukm\|_{\RL^2(\BT^m)} + \|\nabla \utkm\|_{\RL^2(\BT^m)} \lesssim m^{\frac{(\alpha - 1)d}{2}}.
    \end{equation}
\end{proof}
We require more precise estimates on the $\RL^2$ norm of the correctors $\phkm$, using Poincaré's inequality.
\begin{lemma}\label{lemma:phkmTm}
    The sequence $\|h_m^{1/2}\nabla \phkm\|_{L^2(\Omega)}$ is bounded. In addition, as $m\to\infty$,
        \begin{equation}\label{limit:hmphkm}
        \|h_m^{-1/2} \phkm\|_{\RL^2(\Omega)} \to 0
    \end{equation}
    and
    \begin{equation}\label{limit:phkm}
      \|\phkm\|_{\RL^2(\BT^m)} \to 0.
    \end{equation}
\end{lemma}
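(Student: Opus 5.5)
We need to prove three claims about the corrector $\phkm = \ukm - \utkm$, which is supported on $\BT^m$: that $\|h_m^{1/2}\nabla\phkm\|_{\RL^2}$ is bounded, that $\|h_m^{-1/2}\phkm\|_{\RL^2}\to 0$, and that $\|\phkm\|_{\RL^2(\BT^m)}\to 0$.

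\textbf{Weighted gradient.} The triangle inequality gives
\[
\|h_m^{1/2}\nabla\phkm\|_{\RL^2}\le \|h_m^{1/2}\nabla\ukm\|_{\RL^2}+\|h_m^{1/2}\nabla\utkm\|_{\RL^2}.
\]
The first term is bounded by Lemma~\ref{lem:apriori}(2). The second is at most $\|h\|_{\RL^\infty}^{1/2}\|\nabla\utkm\|_{\RL^2}$, since $h_m\le h$, and this is bounded by Lemma~\ref{lem:unifboundutkm}.

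\textbf{The estimate $\|h_m^{-1/2}\phkm\|_{\RL^2}\to0$.} This is the main obstacle. We expand
\[
\int_\Omega h_m^{-1}(\phkm)^2\dV\le 2\int_\Omega h_m^{-1}(\ukm)^2\dV+2\int_\Omega h_m^{-1}(\utkm)^2\dV.
\]
The first integral is bounded by $\sikm/\lambda_j$ when $\lambda_j>0$. Boundedness alone is not enough, so we proceed cell by cell.

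On each ball $\omega_s^m$ the corrector lies in $\RW^{1,2}_0(\omega_s^m)$. On $\omega_s^m$ the weight satisfies $h_m^{-1}=h^{-1}+B(s)\chi_s^m$. Write $\rho=m^{-\alpha}$ for the radius of $\omega_s^m$.

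\emph{Part of the weight equal to $h^{-1}$.} This part is bounded, so its contribution is controlled by the unweighted norm. The Dirichlet Poincaré inequality on a ball of radius $\rho$ gives
\[
\|\phkm\|^2_{\RL^2(\omega_s^m)}\lesssim \rho^2\|\nabla\phkm\|^2_{\RL^2(\omega_s^m)}.
\]
Summing over $s$ and using \eqref{ineq:gradphik}, we get
\[
\|\phkm\|^2_{\RL^2(\BT^m)}\lesssim m^{-2\alpha}\,m^{(\alpha-1)d}.
\]
This tends to zero exactly when $(\alpha-1)d<2\alpha$, i.e. $\alpha(d-2)<d$. That holds because $\alpha<\frac{d}{d-1}\le\frac{d}{d-2}$. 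This proves \eqref{limit:phkm}.

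\emph{Part of the weight $B(s)\chi_s^m$, of size $m^{(\alpha-1)d}$.} Here a cruder argument is needed. We use $\phkm$ as a test function in the weak equation \eqref{eq:weakproblem} for $\ukm$; it has zero trace and is supported in $\BT^m$. Subtracting the identity obtained by splitting $\ukm=\utkm+\phkm$ gives
\[
\int h_m|\nabla\phkm|^2+\lambda_j h_m^{-1}(\phkm)^2
=-\int h_m\nabla\utkm\cdot\nabla\phkm-\lambda_j\int h_m^{-1}\utkm\,\phkm .
\]
Apply Cauchy--Schwarz to the right-hand side and absorb the $\phkm$ factors into the left-hand side. This yields
\[
\int h_m^{-1}(\phkm)^2\lesssim \int_{\BT^m}h_m|\nabla\utkm|^2+\int_{\BT^m}h_m^{-1}(\utkm)^2 .
\]

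\emph{Bounding the two terms involving $\utkm$.} The first term is at most $\|h\|_\infty\int_{\BT^m}|\nabla\utkm|^2$. Since $|\BT^m|\to0$ and the harmonic extensions have equi-integrable gradients (as in \cite{GiLa2021}), this tends to zero.

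For the second term, on each ball we compare $\utkm$ with its value on the sphere $\del\omega_s^m$ using Sobolev/trace estimates. The weight $\chi_s^m$ has mass $|V_s^m|$, so
\[
\int_{\BT^m}B_m(\utkm)^2\approx\sum_s |V_s^m|B(s)\,\overline{(\utkm)^2}\big|_{\omega_s^m}.
\]
This stays bounded but does not vanish. So this route only yields boundedness.

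\emph{Fallback plan for the vanishing statement.} To get genuine decay, we instead aim to show that $h_m^{-1}\phkm^2$ concentrates on a set where $\phkm$ is small. The idea is to choose the smaller of two bounds: the absorption bound above, and the direct Poincaré bound
\[
\int_{\omega_s^m}\chi_s^m(\phkm)^2\le \|\chi_s^m\|_\infty\,\rho^2\|\nabla\phkm\|^2 .
\]
Summing this second bound gives
\[
m^{(\alpha-1)d}\,m^{-2\alpha}\,\|\nabla\phkm\|^2_{\RL^2(\BT^m)}.
\]
Using the weighted bound $\|h_m^{1/2}\nabla\phkm\|\lesssim1$ instead of \eqref{ineq:gradphik} gives
\[
\|\nabla\phkm\|^2_{\RL^2(\BT^m)}\lesssim m^{(\alpha-1)d}.
\]
The product is $m^{2(\alpha-1)d-2\alpha}$. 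This tends to zero iff $\alpha(d-1)<d$, which is precisely the standing hypothesis $\alpha<\frac{d}{d-1}$. I expect this to be the intended computation: the choice of $\alpha$ is tuned exactly for it.
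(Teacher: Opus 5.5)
Your final computation is correct and is essentially the paper's proof. The Dirichlet Poincaré inequality on the balls $\omega_s^m$ (constant $m^{-2\alpha}$), combined with $\|h_m^{-1}\|_{\RL^\infty(\BT^m)}\lesssim m^{(\alpha-1)d}$ and \eqref{ineq:gradphik}, gives $\|h_m^{-1/2}\phkm\|_{\RL^2}\lesssim m^{\alpha(d-1)-d}\to 0$; the only difference is that the paper deduces \eqref{limit:phkm} from this via $h_m\le h$ instead of proving it separately, and your absorption detour (including its unjustified equi-integrability claim for $\nabla\utkm$) is unnecessary and can be deleted.
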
 
\begin{proof} 
Because $h_m \le h$,
\begin{equation}
\begin{aligned}
\|h_m^{1/2}\nabla\phkm\|_{\RL^2(\Omega)} 
&\leq\|h_m^{1/2}\nabla \ukm\|_{\RL^2(\Omega)} +\|h^{1/2}\|_{\RL^\infty}\|\nabla \utkm\|_{\RL^2(\Omega)},
\end{aligned}
\end{equation}
and it follows from (2) in Lemma \ref{lem:apriori} and Lemma \ref{lem:unifboundutkm} that this sequence is bounded. 

We now move to the proof of \eqref{limit:hmphkm}.
    For each $s \in \BS^m$, and $f \in \RW^{1,2}_0(\omega_s^m)$, we have the
    Poincar\'e inequality
    \begin{equation}
        \|f\|_{\RL^2(\omega_s^m)}^2 \lesssim m^{-2\alpha} \|\nabla
        f\|_{\RL^2(\omega_s^m)}^2,
    \end{equation}
    where the power $-2\alpha$ is obtained by scaling down the Poincar\'e
    inequality on a ball of radius $1$. Summing both sides over $s \in \BS^m$
    we obtain that for $f \in \RW^{1,2}_0(\BT^m)$,
    \begin{equation}
        \|f\|_{\RL^2(\BT^m)} \lesssim m^{-\alpha} \|\nabla f\|_{\RL^2(T^m)}. 
    \end{equation}
    Using \eqref{ineq:LinfinitynormChi}, applying the Poincar\'e inequality to
    $\phkm$, and using \eqref{ineq:gradphik} gives
    \begin{equation}\label{eq:phikrate}
        \begin{aligned}
        \|h_m^{-1/2} \phkm\|_{\RL^2(\Omega)} &\lesssim m^{\frac{(\alpha -
        1)d}{2}}\|\phkm\|_{\RL^2(\BT^m)} \\&\lesssim m^{\frac{(\alpha - 1)d}{2} -
        \alpha} \|\nabla \phkm \|_{\RL^2(\BT^m)} \\&\lesssim m^{\alpha(d-1) -
        d}.
    \end{aligned}
    \end{equation}
    Since we chose $\alpha < \frac{d}{d-1}$, this indeed goes to $0$. Finally, we also have
    \begin{equation} 
        \|\phkm\|_{\RL^2(\BT^m)}\leq \|h_m^{1/2}\|_{L^\infty}\|h_m^{-1/2} \phkm\|_{\RL^2(\BT^m)}\leq \|h^{1/2}\|_{L^\infty}\|h_m^{-1/2} \phkm\|_{\RL^2(\BT^m)} \to 0.
    \end{equation}
\end{proof}

\begin{lemma}
\label{lem:strongconv}
    There is $\hat u_k$ in $\RW^{1,2}(\Omega)$ so that, up to subsequences, $\utkm$ converges weakly in $\RW^{1,2}$ to $\hat u_k$, and so that $\ukm$ and $\utkm$ converge strongly in $\RL^2$ to $\hat u_k$.
\end{lemma}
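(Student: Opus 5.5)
The argument has two parts: weak compactness of $\utkm$ via Rellich–Kondrachov, then a transfer of the strong $\RL^2$ convergence from $\utkm$ to $\ukm$ using the corrector estimates of Lemma~\ref{lemma:phkmTm}.

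\textbf{Compactness.} By Lemma~\ref{lem:unifboundutkm}, the sequence $\utkm$ is bounded in $\RW^{1,2}(\Omega)$, with a bound that is uniform in $m$. Since $\Omega$ is compact with smooth boundary, $\RW^{1,2}(\Omega)$ is a Hilbert space and the embedding $\RW^{1,2}(\Omega)\hookrightarrow \RL^2(\Omega)$ is compact (Rellich--Kondrachov). We can therefore extract a subsequence, not relabelled, and find $\hat u_k\in\RW^{1,2}(\Omega)$ with $\utkm\wk\hat u_k$ weakly in $\RW^{1,2}(\Omega)$ and $\utkm\to\hat u_k$ strongly in $\RL^2(\Omega)$. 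We may take this to be the same subsequence along which $\sikm\to\hat\sigma_k$, by passing to a further subsequence if needed. By compactness of the trace operator, $\utkm\to\hat u_k$ also strongly in $\RL^2(\del\Omega)$. We record this now because it is needed later for the normalisation and orthogonality in Step 3.

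\textbf{Transfer to $\ukm$.} From the decomposition \eqref{eq:defcorrectors}, $\ukm=\utkm+\phkm$ with $\phkm$ supported in $\BT^m$. The triangle inequality gives
\[
\|\ukm-\hat u_k\|_{\RL^2(\Omega)}\le \|\utkm-\hat u_k\|_{\RL^2(\Omega)}+\|\phkm\|_{\RL^2(\BT^m)}.
\]
The first term tends to $0$ by the compactness step, and the second tends to $0$ by \eqref{limit:phkm} in Lemma~\ref{lemma:phkmTm}. Hence $\ukm\to\hat u_k$ strongly in $\RL^2(\Omega)$.

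\textbf{Where the difficulty lies.} The argument does not attempt to extract a limit directly from $\ukm$, because $\ukm$ need not be bounded in $\RW^{1,2}$ (Lemma~\ref{lem:apriori}(4)). The real work is therefore the uniform $\RW^{1,2}$ bound on the harmonic extensions, which is already provided by Lemma~\ref{lem:unifboundutkm}, and the vanishing of the correctors in $\RL^2$. The latter rests on the Poincar\'e scaling together with the choice $\alpha<\frac{d}{d-1}$, established in Lemma~\ref{lemma:phkmTm}. Once these are available, the lemma follows directly.
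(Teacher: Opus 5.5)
Your proposal is correct and follows the paper's proof essentially verbatim. You get the uniform $\RW^{1,2}$ bound on $\utkm$ from Lemma~\ref{lem:unifboundutkm}, use weak compactness plus Rellich--Kondrachov for strong $\RL^2$ convergence, and transfer to $\ukm = \utkm + \phkm$ via $\|\phkm\|_{\RL^2}\to 0$ from Lemma~\ref{lemma:phkmTm}; your extra remark on strong convergence of the traces is also correct, and the paper invokes it at the start of Step 2.
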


\begin{proof}
    We have from Lemma \ref{lem:unifboundutkm} that $\utkm$ is a bounded sequence in $\RW^{1,2}(\Omega)$, so that there is, after extracting a subsequence, a weak limit $\hat u_k \in \RW^{1,2}(\Omega)$ for $\utkm$ as $m \to \infty$. By the Rellich--Kondrachov theorem, $\utkm$ converges strongly in $\RL^2$ to $\hat u_k$. Since $\ukm  = \utkm + \phkm$ and we established in Lemma \ref{lemma:phkmTm} that $\phkm \to 0$ strongly in $\RL^2$, we see that $\ukm \to \hat u_k$ strongly in $\RL^2$. 
\end{proof}

\smallskip
\textbf{Step 2: Limits solve the weak eigenvalue equation.}
Now that we have limits $\hat u_k$ and $\hat \sigma_k$, we prove that they are the solution to
a differential equation.
\begin{proposition}
    The limit function $\hat u_k$ is not identically $0$ and solves the weak
    differential equation
    \begin{equation}
        \forall f \in \RW^{1,2}(\Omega), \qquad \int_\Omega h \nabla \hat u_k \cdot
        \nabla f + \lambda_j(h^{-1} + B) \hat u_k f \dV = \hat \sigma_k \int_{\del \Omega} \hat u_k f h
        \dA.
    \end{equation}
\end{proposition}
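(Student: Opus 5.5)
Non-triviality of $\hat u_k$ comes from the boundary normalisation. Since $\utkm$ and $\ukm$ have the same trace and $\int_{\del\Omega} h(\ukm)^2\dA=1$, weak convergence $\utkm\wk\hat u_k$ in $\RW^{1,2}$ (Lemma~\ref{lem:strongconv}) and compactness of the trace operator give $\utkm\to\hat u_k$ strongly in $\RL^2(\del\Omega)$. Hence $\int_{\del\Omega} h\hat u_k^2\dA=1$. By the same compactness, for each $f\in\RC^\infty(\Omega)$ the right-hand side $\sikm\int_{\del\Omega} h_m\ukm f\dA$ converges to $\hat\sigma_k\int_{\del\Omega}h\hat u_k f\dA$. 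Here I use that $h_m=h$ near $\del\Omega$, because $\BT^m$ stays at distance $\gtrsim m^{-1}$ from the boundary; so the weight is $h$ there and the normalisations agree. As in Proposition~\ref{prop:continuitybis}, the functionals involved are uniformly bounded only in a weaker sense, so I will test against $f\in\RC^\infty(\Omega)$ and extend to $\RW^{1,2}$ by density at the end, since the limit equation is continuous in $f$.

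For the gradient term I split $\ukm=\utkm+\phkm$. On $\Omega\setminus\BT^m$ we have $h_m=h$ and $\phkm=0$, so
\[\int_\Omega h_m\nabla\ukm\cdot\nabla f=\int_\Omega h\nabla\utkm\cdot\nabla f+\int_{\BT^m}(h_m-h)\nabla\utkm\cdot\nabla f+\int_{\BT^m}h_m\nabla\phkm\cdot\nabla f.\]
The first term converges to $\int h\nabla\hat u_k\cdot\nabla f$ by weak convergence. The second is at most $2\|h\|_\infty\|\nabla\utkm\|_{\RL^2}\|f\|_{\RC^1}|\BT^m|^{1/2}\to0$. The third is bounded by $\|h_m^{1/2}\nabla\phkm\|_{\RL^2}\|h_m^{1/2}\|_{\RL^\infty}\|f\|_{\RC^1}|\BT^m|^{1/2}\to0$ by Lemma~\ref{lemma:phkmTm}. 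Thus the gradient term passes to the limit.

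The zeroth-order term $\lambda_j\int_\Omega h_m^{-1}\ukm f$ is the main obstacle, since $h_m^{-1}$ is unbounded and converges only weakly-$*$ as measures. I write it as $\int h_m^{-1}\utkm f+\int h_m^{-1}\phkm f$. The corrector part is at most $\|h_m^{-1/2}\phkm\|_{\RL^2}\|h_m^{-1/2}f\|_{\RL^2}$. The first factor tends to $0$ by \eqref{limit:hmphkm}, and the second is bounded because $\int h_m^{-1}f^2$ converges by Lemma~\ref{lem:firstconvergence}. For the main part, $h_m^{-1}=h^{-1}+B_m$, and $\int h^{-1}\utkm f\to\int h^{-1}\hat u_k f$ by strong $\RL^2$ convergence. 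It remains to show $\int B_m\utkm f\to\int B\hat u_k f$. Weak-$*$ convergence of $B_m$ handles only continuous test functions, so I need a uniform estimate of the form
\[\Bigl|\int B_m g\dV-\int B g\dV\Bigr|\lesssim o(1)\|g\|_{\RW^{1,1}}+\ldots\]
for $g=\utkm f$. On each cell I would compare the mean of $g$ over $\omega_s^m$ (weighted by $\chi_s^m$) to its mean over $V_s^m$. A Poincaré-type inequality on the cell $V_s^m$, which has diameter $\asymp m^{-1}$, with a trace-to-small-ball estimate as in \cite{GiLa2021}, should give a discrepancy bounded by $m^{-1}$ times a capacity factor times $\|\nabla g\|_{\RL^1(V_s^m)}$, or an $\RL^2$ analogue using $\|B_m\|_{\RL^\infty}\asymp m^{(\alpha-1)d}$ and the ball size. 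I expect the range $\alpha<\tfrac d{d-1}$ to be exactly what makes this summed error vanish, since $\sum_s$ of these cell errors should be $\lesssim m^{\alpha(d-1)-d}\|g\|_{\RW^{1,2}}$. Because $\utkm f$ is bounded in $\RW^{1,2}$, this yields $\int B_m\utkm f=\int B\utkm f+o(1)\to\int B\hat u_k f$.

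If the direct cell estimate proves delicate, my fallback is to first approximate $\hat u_k$ by a smooth function $v$ in $\RW^{1,2}$. I would then apply the weak-$*$ convergence to $vf$ and control $\int B_m(\utkm-v)f$ by the same uniform cell inequality. Finally, combining the three limits gives the weak equation for all $f\in\RC^\infty(\Omega)$, hence for all $f\in\RW^{1,2}(\Omega)$.
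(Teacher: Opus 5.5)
\textbf{Verdict: genuine gap.} The one delicate step, the convergence of $\int_\Omega B_m\utkm f\dV$, is announced rather than proved.

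The rest is correct and runs parallel to the paper: the trace argument for non-triviality and the boundary term, the three-term splitting of the gradient term, and strong $\RL^2$ convergence for the $h^{-1}$ part. Your bound $\|h_m^{-1/2}\phkm\|_{\RL^2}\|h_m^{-1/2}f\|_{\RL^2}$ for the corrector part is even slightly more direct than the paper's cell-by-cell estimate.

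The missing step is $\int_\Omega(B_m-B)\utkm f\dV\to 0$. Both the uniform cell inequality and its rate are things you ``expect''. Your fallback does not avoid this. Since $\utkm\wk\hat u_k$ only weakly in $\RW^{1,2}$, the difference $\utkm-v$ is not small in $\RW^{1,2}$, while $\|B_m\|_{\RL^\infty}\asymp m^{(\alpha-1)d}$. So controlling $\int_\Omega B_m(\utkm-v)f\dV$ needs exactly the same estimate, as you say yourself.

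Nor is the estimate automatic in the $\RL^1$ form you suggest first. An $\RL^1$ Poincar\'e inequality on $V_s^m$ together with $\|\chi_s^m\|_{\RL^\infty}$ only gives a summed error of order $m^{(\alpha-1)d-1}$. This does not vanish when $1+\frac1d\le\alpha<\frac d{d-1}$. Reaching your rate $m^{\alpha(d-1)-d}$ needs a genuine potential-type bound for averages over the tiny ball $\omega_s^m$, e.g.\ $|g(x)-\bar g_s|\lesssim\int_{V_s^m}|\nabla g(z)|\,|x-z|^{1-d}\,dz$ on the cell. You do not supply this.

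\textbf{How the paper closes it.} On each cell it solves the Neumann problem $\int_{V_s^m}\nabla\psi_s^m\cdot\nabla v\dV=\int_{V_s^m}(1-\chi_s^m)v\dV$, which is solvable because $1-\chi_s^m$ has mean zero on $V_s^m$. This turns the cell error into $\int_{V_s^m}\nabla\psi_s^m\cdot\nabla(\utkm f)\dV$. Testing with $\psi_s^m$ itself, then H\"older on $\omega_s^m$, a scaled Sobolev inequality and Poincar\'e--Wirtinger on $V_s^m$, gives $\|\nabla\psi_s^m\|_{\RL^2(V_s^m)}=o(m^{-d/2})$.

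\textbf{Your $\RL^2$ variant also works, and is cheaper.} First replace $B$ by $B(s)$ on $V_s^m$, at cost $O(m^{-1})\|g\|_{\RL^1}$. Then use $\int_{V_s^m}(\chi_s^m-1)\dV=0$ and Poincar\'e--Wirtinger. With $\bar g_s$ the mean of $g$ over $V_s^m$,
\[
\Bigl|\int_{V_s^m}(\chi_s^m-1)g\dV\Bigr|\le\|\chi_s^m-1\|_{\RL^2(V_s^m)}\|g-\bar g_s\|_{\RL^2(V_s^m)}\lesssim m^{\frac{(\alpha-2)d}{2}-1}\|\nabla g\|_{\RL^2(V_s^m)} .
\]
This holds because $\|\chi_s^m\|_{\RL^2}^2\lesssim m^{2(\alpha-1)d}|\omega_s^m|\asymp m^{(\alpha-2)d}$, which dominates $|V_s^m|\asymp m^{-d}$ for $\alpha>1$. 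Summing over the $\lesssim m^d$ cells with Cauchy--Schwarz gives a total error $\lesssim m^{\frac{(\alpha-1)d}{2}-1}\|\nabla g\|_{\RL^2(\Omega)}$. This tends to $0$ because $\alpha<\frac d{d-1}\le 1+\frac2d$ for $d\ge 2$.

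Applied to $g=\utkm f$, which is bounded in $\RW^{1,2}$, this is exactly the missing estimate. It is essentially the paper's computation with $p=2$, in dual form. As written, however, your proposal leaves it as a conjecture.
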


\begin{proof}
Since $\ukm$ and $\utkm$ share their boundary values, we see that
    $\|h^{1/2} \utkm\|_{\RL^2(\del \Omega)} = 1$ for all $m \in \N$. Since $\utkm \wk \hat u_k$
    in $\RW^{1,2}(\Omega)$, the boundary traces converge strongly in $\RL^2(\partial\Omega)$, by
    compactness of the boundary trace operator.
    This shows that for all $f \in \RW^{1,2}(\Omega)$,
    \begin{equation}
        \lim_{m \to \infty} \sikm \int_{\del \Omega} \ukm f  h \dA = \hat \sigma_k
        \int_{\del \Omega} \hat u_k f h \dA.
    \end{equation}
    In addition, $\|h^{1/2} \hat u_k\|_{\RL^2(\partial\Omega)}=1$, and in particular $\hat u_k$ does not vanish
    identically. 
    Furthermore, since the sequence $\sikm$ is bounded in $m$,
     the sequence of linear functionals
    \begin{equation}
        f \mapsto \int_\Omega h_m \nabla \ukm \cdot \nabla f +
        \lambda_j h_m^{-1} \ukm f \dV = \sikm \int_{\del \Omega} \ukm f h \dA
    \end{equation}
    is bounded in $\RW^{1,2}(\Omega)^*$. Whence, convergence can be tested
    on a dense subspace of $\RW^{1,2}(\Omega)$, and so we now assume that $f \in
    \RC^\infty(\Omega)$. Next, we consider the sequence
    \begin{equation}
        \label{eq:gradconverges}
        \begin{aligned}
        \int_\Omega h_m \nabla \ukm \cdot \nabla f - h \nabla \hat u_k \cdot \nabla f \dV 
        &= \int_\Omega h_m \nabla \phkm \cdot \nabla f \dV  \\ 
        & \qquad + \int_\Omega (h_m - h) \nabla \utkm \cdot \nabla f \dV  \\
        &\qquad + \int_\Omega h \nabla (\utkm - \hat u_k) \cdot \nabla f \dV,
    \end{aligned}
    \end{equation}
    which we aim to show converges to zero. We proceed term by term.
     For the first term, using that $f \in \RC^\infty(\Omega)$, the Cauchy--Schwarz inequality leads to
    \begin{equation}
        \left|\int_\Omega h_m \nabla \phkm \cdot \nabla f \dV\right| \le
        \|h_m^{1/2}\|_{\RL^\infty(\BT^m)} \|h_m^{1/2}\nabla
        \phkm\|_{\RL^2(\BT^m)} \|\nabla f\|_{\RL^\infty(\Omega)} \Vol(\BT^m)^{1/2}.
    \end{equation}
    Since $0\leq h_m\leq h$, it follows from Lemma \ref{lemma:phkmTm} and \eqref{eq:volTm} that the right-hand side tends to 0. 
    For the second term on the right-hand side of \eqref{eq:gradconverges}, by the
    Cauchy--Schwarz inequality and Lemmas \ref{lem:firstconvergence}, \ref{lem:apriori} and \ref{lem:unifboundutkm}
    \begin{equation}
        \begin{aligned}
         \left| \int_\Omega (h_m - h) \nabla \utkm \cdot \nabla f \dV \right| \le
         \|h_m - h\|_{\RL^2(\Omega)} \|\nabla \utkm\|_{\RL^2(\Omega)} \|\nabla
         f\|_{\RL^\infty(\Omega)}\to 0.
        \end{aligned}
    \end{equation}
    For the third term, weak
    convergence $\utkm \wk \hat u_k$ in $\RW^{1,2}(\Omega)$ implies weak $\RL^2$ convergence of the gradients, so that
    \begin{equation}
        \int_\Omega h \nabla (\utkm - \hat u_k) \cdot \nabla f \dV \to 0.
    \end{equation}
    Altogether this yields that the left-hand side of \eqref{eq:gradconverges} goes to $0$.
    Finally, we consider the sequence
    \begin{equation}
    \label{eq:potentialconverges}
        \begin{aligned}
        \int_\Omega h_m^{-1} \ukm f - (h^{-1} + B) \hat u_k f \dV &=
        \int_\Omega (h_m^{-1} - h^{-1} - B) \ukm f \dV \\
        &\qquad + \int_\Omega (h^{-1} + B) (\ukm - \hat u_k) f \dV,
    \end{aligned}
    \end{equation}
    which we once again show converges to zero. 
By Lemma \ref{lem:strongconv},
    \begin{equation}
      \left|\int_\Omega (h^{-1} + B) (\ukm - \hat u_k) f \dV\right| \le
      \|h^{-1} + B\|_{\RL^\infty} \|\ukm - \hat u_k\|_{\RL^2} \|f\|_{\RL^2} \to 0. 
    \end{equation}
    We are left to analyse the first term on the right-hand side of \eqref{eq:potentialconverges}. Since 
    $B$ is smooth, we have uniformly in each Vorono\u{\i} cell $V_s^m$ that 
    $B(x) = B(s) + O(m^{-1})$. This allows us to rewrite,
    \begin{equation}\label{eq:decomphmhb}
        \begin{aligned}
        \int_\Omega (h_m^{-1} - h^{-1} - B) &\ukm f \dV = \int_\Omega (B_m -
        B) \ukm f  \dV\\
        &\lesssim \sum_{s \in \BS^m} \left(B(s)  \left|\int_{V_s^m}  (\chi_s^m-1) \ukm f \dV\right| + m^{-1} \int_{V_s^m} |\ukm f| \dV \right).
    \end{aligned}
    \end{equation}
    In turn,
    the last term of \eqref{eq:decomphmhb} can be bounded as
    \begin{equation}
        m^{-1}  \sum_{s \in \BS^m} \int_{V_s^m} |\ukm f| \dV \le m^{-1} \|f\|_{\RL^\infty(\Omega)} \|\ukm\|_{\RL^1(\Omega)}
        \to 0.
    \end{equation}
Finally, we decompose $\ukm = \utkm + \phkm$ and use \eqref{ineq:LinfinitynormChi} to obtain
\begin{equation}
\begin{aligned}
    \sum_{s \in \BS^m} B(s)  \bigg| \int_{V_s^m}  (\chi_s^m-1) \ukm f \dV\bigg| &\lesssim \sum_{s \in \BS^m} B(s) \left( \bigg| \int_{V_s^m}  (\chi_s^m-1) \utkm f \dV\bigg| \right.\\&\qquad\quad+ \left.m^{(\alpha - 1)d} \|f\|_{\RL^\infty(\Omega)} \int_{\omega_s^m} |\phkm| \dV\right).
    \end{aligned}
\end{equation}
These are the final two terms whose limits need to vanish as $m \to \infty$. We start by using the Cauchy--Schwarz inequality on integrals then on sums to see that
\begin{equation}
\begin{aligned}
    \sum_{s \in \BS^m}   B(s) \|f\|_{\RL^\infty} \int_{\omega_s^m} m^{(\alpha - 1) d} |\phkm| \dV &\lesssim \sum_{s \in \BS^m} m^{\frac{(\alpha - 1) d}{2}}|\omega_s^m|^{1/2} \|m^{\frac{(\alpha - 1)d}{2}}\phkm\|_{\RL^2(\omega_s^m)} \\
    &\le \|m^{\frac{(\alpha - 1)d}{2}} \phkm\|_{\RL^2(\BT^m)} \xrightarrow{m \to \infty} 0, 
    \end{aligned}
\end{equation}
where we used \eqref{eq:phikrate} for the final limit. On the other hand, notice that since $1 - \chi_s^m$ has zero average on $V_s^m$, which is the solvability condition for the Neumann problem,
    the Lax--Milgram lemma ensures, for each $s \in \BS^m$, the existence of
    $\psi_s^m \in \RW^{1,2}(V_s^m)$ satisfying the weak differential equation
    \begin{equation}
    \label{eq:psivariational}
        \forall v \in \RW^{1,2}(V_s^m), \qquad \int_{V_s^m} \nabla \psi_s^m
        \cdot \nabla v \dV = \int_{V_s^m}  (1 - \chi_s^m)  v \dV.
    \end{equation}
    We can furthermore assume that $\int_{V_s^m} \psi_s^m \dV = 0$. This leads
    us to
      \begin{equation}
        \begin{aligned}
        \left|\sum_{s \in \BS^m} B(s) \int_{V_s^m} (1 - \chi_s^m) \utkm f
        \dV\right| &=
        \left|\sum_{s \in \BS^m} B(s) \int_{V_s^m} \nabla \psi_s^m \cdot \nabla
        (\utkm f) \dV\right| \\
        &\le \|B\|_{\RL^\infty} \|f\|_{\RC^1} \sum_{s \in \BS^m}
        \|\nabla \psi_s^m\|_{\RL^2(V_s^m)} \|\utkm\|_{\RW^{1,2}(V_s^m)} \\
        &\lesssim \|B\|_{\RL^\infty} \|f\|_{\RC^1} 
            \left( \sum_{s \in \BS^m}
            \|\nabla \psi_s^m\|_{\RL^2(V_s^m)}^2 \right)^{1/2}, \\
        \end{aligned}
    \end{equation}
    where we applied the Cauchy--Schwarz inequality on integrals, then on sums, as well as \eqref{eq:uniformW12utkm}.   Our goal is therefore to prove that $\|\nabla \psi_s^m\| = o(m^{-\frac{d}{2}})$. In
    order to do so, we put $\psi_s^m$ in its own variational formulation \eqref{eq:psivariational} and use
    the fact that it has average zero on $V_s^m$, that $\chi_s^m$ is
    supported on $\omega_s^m$, and \eqref{ineq:LinfinitynormChi} to see that for any $p \in (1,\infty)$, 
    \begin{equation}
        \label{eq:amp}
        \begin{aligned}
        \|\nabla \psi_s^m\|_{\RL^2(V_s^m)}^2 &=  -\int_{V_s^m} \psi_s^m \chi_s^m
        \dV  \\
        &\le \|\chi_s^m\|_{\RL^\infty} |\omega_s^m|^{\frac{p-1}{p}}
        \|\psi_s^m\|_{\RL^p(\omega_s^m)} \\
        &\lesssim m^{\frac{(\alpha - p)d}{p}} \|\psi_s^m\|_{\RL^{p}(V_s^m)}.
    \end{aligned}
    \end{equation}
    Suppose now that we choose $p \le \frac{2d}{d-2}$ ($p < \infty$ for $d = 2$)
    and scale the Sobolev embedding $\RW^{1,2} \hookrightarrow \RL^{p}$ to see
    \begin{equation}
        \label{eq:sob}
        \|\psi_s^m\|_{\RL^p(V_s^m)} \lesssim m^{\frac{(p-2)d}{2p}}
        \|\psi_s^m\|_{\RL^2(V_s^m)} + m^{\frac{(p-2) d -2p}{2p}} \|\nabla
        \psi_s^m\|_{\RL^2(V_s^m)}.
    \end{equation}
    Since $V_{s^m}$ is a geodesically convex set of inradius approximately $m^{-1}$, its first Neumann eigenvalue is of the order $m^{2}$. Thus, since $\psi_s^m$ has average $0$ on $V_s^m$, we have that
    \begin{equation}
        \label{eq:pw}
        \|\psi_s^m\|_{\RL^2(V_s^m)} \lesssim m^{-1} \|\nabla
        \psi_s^m\|_{\RL^2(V_s^m)}.
    \end{equation}
    Inserting \eqref{eq:sob} and \eqref{eq:pw} into \eqref{eq:amp}, we see that
    \begin{equation}
        \|\nabla \psi_s^{m}\|_{\RL^2(V_s^m)} \lesssim m^{-\frac d 2 +
        \frac{\alpha d - d - p}{p}} = o(m^{-\frac d 2})
    \end{equation}
    as long as we choose $p > \frac{d}{d-1}$, since we chose $\alpha <
    \frac{d}{d-1}$. This was the last required estimate, completing the proof.
\end{proof}

\textbf{Step 3: Limits are eigenpairs of the correct index.}
    We now know that the limits $\hat u_k$ and $\hat \sigma_k$ form a non-trivial eigenpair
    for Problem \eqref{eq:separationpotential}. It only remains to prove that
    $\hat \sigma_k = \sigma_{\lambda_j,k}(h,B)$ and that whenever
    \begin{equation}
        \sigma_{\lambda_j,k-1} < \sigma_{\lambda_j,k} = \dotso = \sigma_{\lambda_j,k+\ell-1} < \sigma_{\lambda_j,k+\ell},
    \end{equation}
    then $\{\hat u_{k},\dotsc, \hat u_{k+\ell - 1}\}$ forms an orthonormal basis in $\RL^2(\del \Omega, h \dA)$ of the $\ell$-dimensional eigenspace associated to $\sigma_{\lambda_j,k}$.

    By strong convergence of $h_m^{1/2} \ukm \to h^{1/2} \hat u_k$ in $\RL^2(\del \Omega)$, we have that $(\hat u_k : k \in \N_0)$ forms an orthonormal set in
    $\RL^2(\del \Omega, h \dA)$. In particular, for any $k \in \N$, put $E_k = \spn(\hat u_0, \dotsc, \hat u_k)$, which is a $k+1$ dimensional subspace of $\RL^2(\del \Omega, h\dA)$ and from the variational characterisation of $\sigma_{\lambda_j,k}$ we have that
    \begin{equation}
        \sigma_{\lambda_j,k} \le \max_{f \in E_k \setminus \{0\}} R_{\lambda_j,h,B}(f) \le \max_{0 \le \ell \le k} \hat \sigma_\ell.
    \end{equation}
Combining this with the upper semicontinuity statement (1) of Lemma \ref{lem:apriori}, which tells us that $\hat \sigma_k \le \sigma_{\lambda_j,k} + \smallo 1$, we deduce that $\sigma_{\lambda_j,k} = \hat \sigma_k$ for all $k$. In turn, $\hat u_k$ is an eigenfunction for $\sigma_{\lambda_j,k}$, and since they form an orthonormal set they span each eigenspace.    
    This completes the proof of Proposition~\ref{prop:homogenisation}.  \qed

\section{Eigenvalue bounds} \label{proof: theorems}

In this section, we investigate eigenvalue bounds. In the first instance, bounds are obtained via the variational characterisation. Then, we saturate those bounds using the continuity results of the previous section. Finally, we prove the stability result for fibres of dimension $2$ and the lack of universal bounds under $\RL^p$ constraints. 

\subsection{Bounds via the variational characterisation.}

\begin{proof}[Proof of Theorem \ref{thm:upperbound}]
    We start with \eqref{ineq:introgeneral}. In~\eqref{ineq:consttest} we have already obtained that 
\begin{equation}
\label{eq:nonstrict}
\sigma_k(M_h)\leq \lambda_k\frac{\int_{\Omega}h^{n-2}\,dV_\Omega}{\int_{\partial\Omega}h^n\,dA_{\partial\Omega}},
\end{equation}
and we now claim that the inequality has to be strict. Indeed, if $\sigma_{\lambda_k,0}(h)=\lambda_k\frac{\int_{\Omega}h^{n-2}\,dV_\Omega}{\int_{\partial\Omega}h^n\,dA_{\partial\Omega}}$, then $f\equiv 1$ must be an eigenfunction corresponding to $\sigma_{\lambda_k,0}(h)$. However, the strong form of the eigenvalue equation gives:
\begin{equation}
0=L_hf+\lambda_k h^{n-2}f=\lambda_k h^{n-2} f,
\end{equation}
whence $\lambda_k=0$, contradicting the connectedness hypothesis on $\Sigma$. 
From here, we see that \eqref{ineq:introupbd} follows from inequality \eqref{ineq:introgeneral}, using that $h \leq R$ and $\int_{\partial \Omega} h^n \dA = |\partial \Omega |$.
\end{proof}
Immediately, we get the following.

\begin{proof}[Proof of Corollary \ref{coro: integralbound}.]
Since $\int_{\partial \Omega} h^n \, dA_{\partial \Omega} = |\partial \Omega|$, $p \ge n-2 \ge 1$ and $|\Omega| < \infty$, using H\"older's inequality
with  conjugate exponents $\frac{p}{n-2}$ and $\frac{p}{p - n + 2}$ on \eqref{ineq:introgeneral} gives
\begin{equation*}
  \sigma_k(M_h) <  \lambda_k \frac{\int_{\Omega} h^{n-2} dV_{\Omega}}{|\del \Omega|} \le \lambda_k\frac{ \Vert h\Vert_{\RL^p}^{n-2} \vert \Omega \vert^{(p-n+2})/p}{\vert \partial \Omega \vert}.
\end{equation*}
\end{proof}

Using the variational characterisation, we also prove the implicit upper bound in \eqref{intro:doubleinequality}. More precisely, we prove the following.

\begin{lemma}
\label{lem:doubleinequality}
    Let $R \ge 1$. Let $M_h = \Omega \times_h \Sigma$ be a warped product of dimension $N \geq 3$ with fibre $\Sigma$ of dimension $n \ge 2$ where $h \le R$ and $h \equiv 1$ on $\del \Omega$. Then, for each index $k \ge 1$,
    \begin{equation}
    \label{ineq:constantwarping}
        \sigma_k(M_h) \le R^n \sigma_k(M_R) < R^{n-2} \frac{\lambda_k|\Omega|}{|\del \Omega|}.
    \end{equation}
\end{lemma}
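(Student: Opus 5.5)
The plan is a direct comparison of Rayleigh quotients. Only two facts about $h$ are available: $h\le R$ in the interior and $h\equiv 1$ on $\del\Omega$. Since $n\ge 2$, both exponents $n$ and $n-2$ in the numerator of $\CR_{\lambda,h}$ are nonnegative. Hence for every $f\in\RW^{1,2}(\Omega)$ and every $\lambda\ge 0$,
\begin{equation}
\CR_{\lambda,h}(f)=\frac{\int_\Omega |\nabla f|^2h^n+\lambda f^2h^{n-2}\dV}{\int_{\partial\Omega}f^2\dA}
\le \frac{\int_\Omega R^n|\nabla f|^2+\lambda R^{n-2} f^2\dV}{\int_{\partial\Omega}f^2\dA}
= R^n\,\CR_{\lambda,R}(f),
\end{equation}
where the last equality uses $R^n\int_{\del\Omega}f^2\dA=\int_{\del\Omega}f^2R^n\dA$, the denominator of $\CR_{\lambda,R}$. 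This pointwise inequality between Rayleigh quotients passes through the min--max characterisation \eqref{eq:minmaxcharseparated}. It therefore gives $\sigma_{\lambda_j,\ell}(h)\le R^n\sigma_{\lambda_j,\ell}(R)$ for all $j,\ell\ge 0$.

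Next we transfer this to the labelled spectrum of $M_h$ using Proposition~\ref{prop:separation}. Since $\{\sigma_k(M_h)\}$ is the increasing rearrangement of the multiset $\{\sigma_{\lambda_j,\ell}(h)\}$, and the termwise bound holds between the two indexed families, a counting argument finishes the step. The number of elements of $\{\sigma_{\lambda_j,\ell}(h)\}$ that are at most $t$ is at least the number of elements of $\{R^n\sigma_{\lambda_j,\ell}(R)\}$ that are at most $t$. Consequently $\sigma_k(M_h)\le R^n\sigma_k(M_R)$ for every $k$, which is the first inequality.

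For the second (strict) inequality, we use \eqref{ineq:kcompk0} for $M_R$, which gives $\sigma_k(M_R)\le\sigma_{\lambda_k,0}(R)$. We then apply the strict form of the constant-trial-function bound, namely Theorem~\ref{thm:upperbound}, inequality \eqref{ineq:introgeneral}, with $h\equiv R$. This yields
\begin{equation}
\sigma_k(M_R)<\lambda_k\frac{R^{n-2}|\Omega|}{R^n|\del\Omega|}=\frac{\lambda_k|\Omega|}{R^2|\del\Omega|},
\end{equation}
and multiplying by $R^n$ gives the claim. Strictness comes from the argument already given in the proof of Theorem~\ref{thm:upperbound}: the constant function is not an eigenfunction because $\lambda_k>0$ for $k\ge1$.

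The only step requiring care is the rearrangement argument: the inequality must hold between the two doubly indexed families entry by entry, not merely for $\sigma_{\lambda_k,0}$. The counting formulation above handles this cleanly. Everything else is routine.
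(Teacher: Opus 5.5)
Your proposal is correct and follows the paper's proof essentially step for step. You use the pointwise comparison $\CR_{\lambda,h}(f)\le R^n\CR_{\lambda,R}(f)$ passed through min--max and rearranged via Proposition~\ref{prop:separation}, and then \eqref{ineq:kcompk0} with the constant trial function and the same strictness argument as in Theorem~\ref{thm:upperbound}; your counting argument (and your inclusion of $\lambda_0=0$) just makes explicit the rearrangement step that the paper states in one line.
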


\begin{proof}
For any $f\in \RW^{1,2}(\Omega)$ and fixed $\lambda > 0$, since $h=1$ on $\partial\Omega$, we have
\begin{equation}
\label{eq:rayleighmonotone}
\begin{aligned} 
\CR_{\lambda,h}(f)&=\frac{\int_\Omega (|\nabla f|^2 h^n+\lambda f^2 h^{n-2})\dV}{\int_{\partial\Omega} f^2\dA} \\
& \le \frac{\int_\Omega (|\nabla f|^2 R^n+\lambda f^2 R^{n-2})\dV}{\int_{\partial\Omega}f^2\dA}
\\ &=R^n\frac{\int_\Omega (|\nabla f|^2R^n+R^{n-2}\lambda f^2 )\dV}{\int_{\partial\Omega}R^nf^2\dA} \\
&= R^n \CR_{\lambda,R}(f).
\end{aligned}
\end{equation}
Therefore, for every $(j,\ell) \in \N^2_0$, monotonicity of the Rayleigh quotients implies that $\sigma_{\lambda_j,\ell}(h) \le R^n \sigma_{\lambda_j,\ell}(R)$, which provides the first inequality in \eqref{ineq:constantwarping} after rearranging the eigenvalues in increasing order.

  From there, we use \eqref{ineq:kcompk0} to see that $R^n \sigma_k(M_R) \le R^n \sigma_{\lambda_k,0}(R)$, and using $f \equiv 1$ as a trial function for $\sigma_{\lambda_k,0}(R)$ gives
   $$\sigma_k(M_R) \leq R^{n-2}\frac{\lambda_k \vert \Omega \vert}{\vert \partial \Omega \vert}.$$
  The inequality is then strict because constant functions cannot be eigenfunctions of $\sigma_{\lambda,k}$ for $\lambda > 0$,  by the same argument as in the proof of Theorem \ref{thm:upperbound}. 
  \end{proof}

Finally, in order to prove Theorem \ref{thm:onedfibre}, we also need to have a comparison lemma between the Steklov eigenvalues of $M_h$ and the eigenvalues of the limit homogenised problem \eqref{eq:separationpotential}.

\begin{lemma}
\label{lem:lowerBbound}
    Let $\Omega \times_h \Sigma$ be a warped product of dimension $N \geq 3$ with fibre $\Sigma$ of dimension $1$. For all $\lambda > 0$, $\sigma_{\lambda,0}(h,B) \to \infty$ as $B \to \infty$. In particular, for all $k \in \N$, there is $B \in (0,\infty)$ so that $\sigma_k(M_h) < \sigma_{\lambda_1,0}(h,B)$. 
\end{lemma}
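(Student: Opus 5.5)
The plan is a direct variational argument. First I show that $\sigma_{\lambda,0}(h,B)\to\infty$ as the constant $B\to\infty$, with $h$ and $\lambda>0$ fixed. Then I compare with $\sigma_k(M_h)$, which does not depend on $B$. Recall that $\sigma_{\lambda,0}(h,B)$ is the minimum over $f\in\RW^{1,2}(\Omega)\setminus\{0\}$ of $\CR_{\lambda,h,B}(f)$ from \eqref{eq:rayleighquotienthB}. Since $h$ is smooth and positive on the compact set $\Omega$, there are constants $0<a\le h\le b$. Hence, for every $f$,
\begin{equation}
\CR_{\lambda,h,B}(f)\ge \frac{a}{b}\,\frac{\int_\Omega |\nabla f|^2+\tfrac{\lambda}{a}(b^{-1}+B)f^2\dV}{\int_{\del\Omega}f^2\dA}\ge \frac{a}{b}\,\sigma_{\mu,0},\qquad \mu:=\frac{\lambda B}{a}.
\end{equation}
Here $\sigma_{\mu,0}$ is the first eigenvalue of the Helmholtz--Steklov problem \eqref{eq:sepwarpedconstant} on $\Omega$ with energy $\mu$. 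It therefore suffices to show $\sigma_{\mu,0}\to\infty$ as $\mu\to\infty$.

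\textbf{Divergence of $\sigma_{\mu,0}$.} This is the one nontrivial step. The tool is a trace inequality: there is $C>0$ such that for all $\eps\in(0,1]$ and all $f\in\RW^{1,2}(\Omega)$,
\begin{equation}
\int_{\del\Omega}f^2\dA\le \eps\int_\Omega|\nabla f|^2\dV+C\eps^{-1}\int_\Omega f^2\dV.
\end{equation}
I would prove it in a collar of $\del\Omega$ using Fermi coordinates, by the same fundamental-theorem-of-calculus computation as in Lemma \ref{lemma:deBrunopasproduit}. Alternatively, apply the divergence theorem to $f^2X$, where $X$ is a smooth extension of the outward unit normal, and then use Young's inequality. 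Taking $\eps=1/\sqrt{\mu}$ for $\mu\ge C^2$ gives $\int_\Omega|\nabla f|^2+\mu f^2\ge \sqrt{\mu}\,\min(1,C^{-1}\sqrt{\mu})\int_{\del\Omega}f^2$, so $\sigma_{\mu,0}\gtrsim\sqrt{\mu}\to\infty$. In particular this also covers non-constant $B\ge B_0$ with $B_0\to\infty$, since the Rayleigh quotient is monotone in $B$.

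\textbf{Conclusion.} Fix $k$. The number $\sigma_k(M_h)$ is a fixed finite value, independent of $B$. Since $\lambda_1>0$ because $\Sigma$ is connected, the first step applied with $\lambda=\lambda_1$ gives a constant $B\in(0,\infty)$ with $\sigma_{\lambda_1,0}(h,B)>\sigma_k(M_h)$. This is the ``in particular'' statement.

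The likely use later is as follows. Every $\sigma_{\lambda_j,\ell}(h,B)$ with $j\ge1$ satisfies $\sigma_{\lambda_j,\ell}(h,B)\ge\sigma_{\lambda_1,0}(h,B)$, because $\lambda_j\ge\lambda_1$ and the Rayleigh quotient is monotone in $\lambda$. So only the $j=0$ family, which is unaffected by $B$, contributes to the first $k+1$ eigenvalues. The only real work is the trace inequality, and it is standard.
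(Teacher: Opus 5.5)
Your proof is correct, but it follows a different route from the paper's. The paper argues by contradiction, and only qualitatively. If $\sigma_{\lambda,0}(h,B_m)\le K$ along constants $B_m\to\infty$, then the normalised minimisers $f_m$ are bounded in $\RW^{1,2}(\Omega)$. The bound $\int_\Omega f_m^2\le K/(\lambda B_m)$ forces their weak limit to vanish, and compactness of the trace operator then contradicts $\|h^{1/2}f_m\|_{\RL^2(\del\Omega)}=1$.

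You instead freeze the coefficients via $a\le h\le b$, reducing to the constant-coefficient Helmholtz--Steklov problem. You then replace compactness of the trace by its quantitative form, the $\eps$-trace inequality with constant $C\eps^{-1}$. This costs you an auxiliary inequality. In return you get an explicit rate $\sigma_{\lambda,0}(h,B)\gtrsim\sqrt{B}$, which is the true order of growth, and hence an explicit admissible $B\gtrsim\sigma_k(M_h)^2$ in the second statement.

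There is one harmless arithmetic slip. With $\eps=\mu^{-1/2}$ and $\mu\ge1$ you get $\int_\Omega|\nabla f|^2+\mu f^2\ge\min(1,C^{-1})\sqrt{\mu}\int_{\del\Omega}f^2$, not the factor $\min(1,C^{-1}\sqrt\mu)$. The conclusion $\sigma_{\mu,0}\gtrsim\sqrt\mu$ is unaffected.

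Finally, a caution about the application you sketch. The condition $\sigma_{\lambda_1,0}(h,B)>\sigma_k(M_h)$ does not by itself guarantee that only the $j=0$ family contributes to the first $k+1$ eigenvalues. The reason is that $\sigma_k(M_h)$ can be much smaller than $\sigma_{0,k}(h)$, for instance when $h$ is close to a large constant. What you need there is $\sigma_{\lambda_1,0}(h,B)>\sigma_{0,k}(h)$, which your divergence statement provides just as easily.
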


\begin{proof}
Towards a contradiction, assume that there are $\lambda, K > 0$, a warping function $h$, and sequences $B_m \to +\infty$ and $f_m \in \RW^{1,2}(\Omega)$, such that
\begin{equation}
\label{eq:contradictionboundedev}
    \|h^{1/2} f_m\|_{\RL^2(\del \Omega)} = 1 \quad \text{and} \quad \int_{\Omega} h |\nabla f_m|^2 + \lambda(h^{-1} + B_m) f_m^2 \dV \le K.
\end{equation}
    Since $h$ is bounded and bounded away from zero, the norm
    \begin{equation}
    \label{eq:equivinner}
        \|f\|^2 = \int_{\Omega} h |\nabla f|^2 \dV + \int_{\del \Omega} h f^2 \dA
    \end{equation}
    is equivalent to the standard norm on $\RW^{1,2}(\Omega)$, so that we can read off \eqref{eq:contradictionboundedev} that $(f_m)$ is a bounded sequence in $\RW^{1,2}(\Omega)$, in particular it has a weak limit $\hat f \in \RW^{1,2}(\Omega)$. Since every term under the integral in \eqref{eq:contradictionboundedev} is positive, we have that 
    \begin{equation}
        \int_\Omega f_m^2 \dV \le \frac{K}{\lambda B_m} \to 0
    \end{equation}
    so that $f_m \to \hat f = 0$ in $\RL^2(\Omega)$. However, 
    by compactness of the trace operator, we deduce that $\|h^{1/2} \hat f\|_{\RL^2(\del \Omega)} = 1$, a contradiction. 
\end{proof}
  
\subsection{Saturated bounds via continuity}

In order to prove Theorems \ref{thm:boundedinteriorvolume}, \ref{thm:MainIntro},  and \ref{thm:onedfibre}, the following sequence of partitions of unity is useful. For $m \in \N$, introduce the domain
\begin{equation}
\label{eqdef:Omegam}
    \Omega_m := \{x \in \Omega : \dist(x,\del \Omega) \ge m^{-1} \},
\end{equation}
and let $\chi_m : \Omega \to [0,1]$ be a smooth function, taking value $1$ on $\Omega_m$ and $0$ on $\del \Omega$. For any $p \in [1,\infty)$ we have that $\chi_m \to 1$ in $\RL^p(\Omega)$ as $m \to  \infty$.

\begin{proof}[Proof of Theorem \ref{thm:boundedinteriorvolume}]
  For $r \in (0,1)$ and every $m \in \N$, consider the warping functions
  \begin{equation}
      h_m = \left(\frac{r + (1 - r)\chi_m}{\int_{\Omega}r +  (1 - r)\chi_m \dV}\right)^{1/n},
  \end{equation}
  so that $|M_{h_m}| = 1$ for all $m \in \N$, and 
  \begin{equation}
  h_m\big|_{\del \Omega} \equiv  \left(\frac{r}{\int_\Omega r + (1- r)\chi_m \dV}\right)^{1/n} \xrightarrow{m \to \infty} \left(\frac{r}{|\Omega|}\right)^{1/n}.
  \end{equation}
  By Proposition \ref{prop:continuitybis}, we have that
  \begin{equation}
     \lim_{m \to \infty} \sigma_k(M_{h_m}) = \frac{|\Omega|}{r} \sigma_k(M_1).
  \end{equation}
  Taking $r \to 0$ provides our claim.
\end{proof}

  \begin{proof}[Proof of Theorem \ref{thm:MainIntro}.] 
  By Lemma \ref{lem:doubleinequality}, we already have that 
  \begin{equation}
  \label{eq:tosaturate}
  \sup\{\sigma_k(M_h) : h\le R, \ h=1  \text{ on }   \partial \Omega\} \le R^n \sigma_k(M_R) < R^{n-2} \lambda_k\frac{|\Omega|}{|\del \Omega|}.
  \end{equation}
  We aim at proving that there is a saturating sequence for the first inequality. For any warping function $h$ taking value $1$ on $\del \Omega$, consider the sequence of functions $h_m = (1 - \chi_m) h + \chi_m R$.
  By Proposition \ref{prop:continuitybis}, we have $\sigma_k(M_{h_m}) \to R^n \sigma_k(M_R)$, so that indeed \eqref{eq:tosaturate} is saturated.

Finally, the identity \eqref{Case n>2} was proved in Section \ref{Spectrum $M_R$}, together this completes the proof of Theorem~\ref{thm:MainIntro}.
\end{proof}

Finally, the proof of Theorem \ref{thm:onedfibre} uses the homogenisation construction.

\begin{proof}[Proof of Theorem \ref{thm:onedfibre}]
Fix $R > 1$ and warping function $h \le R$ with $h \equiv 1$ on $\del \Omega$. Then, by \eqref{ineq:kcompk0},
\begin{equation}
    \sigma_k(M_h) \le \sigma_{0,k}(h) 
    \le  \min_{\dim(E) = k+1} \max_{f \in E \setminus \{0\}} R \frac{\int_\Omega |\nabla f|^2 \dV}{\int_{\del \Omega} f^2 \dA} 
    = R \sigma_{0,k}(\Omega).
\end{equation}
We now need to show that this inequality is saturated. 
Consider once again the sequence of warping functions $h_m = (1 - \chi_m) h + \chi_m R$, it follows from Proposition \ref{prop:continuitybis} that $\sigma_{\lambda_j,k}(M_{h_m}) \to R \sigma_{\lambda_j,k}(M_R)$ for all $j,k \ge 0$. 

Using Lemma \ref{lem:lowerBbound}. choose $B_m$ such that $\sigma_k(M_{h_m}) < \sigma_{\lambda_1,0}(h_m,B_m)$. Consider the sequence of warping functions $h_{m,\ell}$ provided by Proposition \ref{prop:homogenisation} so that for all $j,k \ge 0$, 
\begin{equation}
\lim_{\ell \to \infty} \sigma_{\lambda_j,k}(h_{m,\ell}) = \sigma_{\lambda_{j,k}}(h_m,B_m).
\end{equation}
In particular, for $\ell$ sufficiently large, 
\begin{equation}
\sigma_k(M_{h_{m,\ell}}) = \sigma_{0,k}(h_{m,\ell}) \xrightarrow{\ell \to \infty} \sigma_{0,k} (h_m) \xrightarrow{m \to \infty} R \sigma_k(\Omega).
\end{equation}
Therefore, we can find a diagonal sequence $h_{m,\ell(m)}$ saturating the bound
\begin{equation}
    \sup\{\sigma_k(M_h) : h \le R, \ h \equiv 1 \text{ on } \del \Omega\} = R \sigma_k(\Omega).
\end{equation}
\end{proof}

\subsection{Stability for fibres of dimension \texorpdfstring{$n=2$}{n=2}}
In this section, we first present a quantitative improvement of inequality~\eqref{ineq:introgeneral}, from which Theorem \ref{thm:Stability} follows.

\begin{proposition}\label{prop:stablebody}
    Let $D\subset\Omega$ be a subdomain and $f : \Omega \to [0,\infty)$ be a nonnegative Lipschitz function whose support is $D$. Let $h$ be a warping function such that $\int_{\partial \Omega} h^2 \, dA_{\partial \Omega} = |\partial \Omega|$. Then,
    \begin{equation}
    \label{eq:boundwithdefect}
        \sigma_k(M_h) \le \frac{\lambda_k}{|\del \Omega}| \int_\Omega h^{n-2} \dV - \frac{\lambda_k^2}{|\del \Omega}| \frac{(\int_D f h^{n-2} \dV)^2}{ \int_{D} (h^n |\nabla f|^2 + \lambda_k h^{n-2} f^2) \dV}
    \end{equation}
\end{proposition}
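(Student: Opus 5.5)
We use the trial function $u = 1 - t f$ in the variational characterisation \eqref{eq:cara3} of $\sigma_{\lambda_k,0}(h)$, with $t\in\R$ a free parameter to be optimised at the end. We then conclude through $\sigma_k(M_h)\le\sigma_{\lambda_k,0}(h)$ from \eqref{ineq:kcompk0}. Since $f$ is supported in $D\subset\Omega$, we read $D$ as compactly contained, or at least assume $f$ vanishes on $\partial\Omega$. Then $u\equiv1$ on $\partial\Omega$, and by the normalisation $\int_{\partial\Omega}h^2\dA=|\partial\Omega|$ the denominator of $\CR_{\lambda_k,h}(u)$ equals $|\partial\Omega|$ exactly. (Here $n=2$, so $h^n=h^2$; the formula is written with general $n$, and we keep $h^n$, $h^{n-2}$ throughout, using only that the boundary weight integrates to $|\partial\Omega|$.) Because $f$ is Lipschitz, $u\in\RW^{1,2}(\Omega)$ and is admissible.

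Next we expand the numerator. Since $\nabla u=-t\nabla f$ and $u^2=1-2tf+t^2f^2$,
\begin{equation*}
\int_\Omega h^n|\nabla u|^2+\lambda_k h^{n-2}u^2\dV=\lambda_k\int_\Omega h^{n-2}\dV-2t\lambda_k\int_D fh^{n-2}\dV+t^2\int_D\bigl(h^n|\nabla f|^2+\lambda_kh^{n-2}f^2\bigr)\dV,
\end{equation*}
where the last two integrals reduce to $D$ because $f$ and $\nabla f$ vanish a.e. outside $D$. Write $A=\int_D fh^{n-2}\dV$ and $Q=\int_D(h^n|\nabla f|^2+\lambda_kh^{n-2}f^2)\dV$. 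The quantity $Q$ is positive unless $f\equiv0$, in which case the statement is vacuous.

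Minimising the quadratic $-2t\lambda_kA+t^2Q$ over $t$ gives $t=\lambda_kA/Q$ and the minimal value $-\lambda_k^2A^2/Q$. Dividing by $|\partial\Omega|$ yields exactly \eqref{eq:boundwithdefect}.

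The only delicate point is the boundary behaviour of $f$. If $D$ touches $\partial\Omega$, the denominator changes and the clean formula fails, so we would state the standing assumption that $f|_{\partial\Omega}=0$, which holds in the intended application to balls $B(q,r)\subset\Omega$. Everything else is a routine computation.
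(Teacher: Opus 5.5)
Your proposal is correct and is essentially the paper's own proof: trial function $1-tf$ in the Rayleigh quotient for $\sigma_{\lambda_k,0}(h)$, minimisation of the resulting quadratic in $t$ at $t_0=\lambda_k\int_D fh^{n-2}\dV\big/\int_D(h^n|\nabla f|^2+\lambda_k h^{n-2}f^2)\dV$, then $\sigma_k(M_h)\le\sigma_{\lambda_k,0}(h)$. Your caveats match assumptions the paper also uses implicitly: $f$ must vanish on $\del\Omega$ (the paper says ``in particular vanishes on $\del\Omega$''), and the normalisation actually needed is $\int_{\del\Omega}h^n\dA=|\del\Omega|$, which coincides with the stated one for $n=2$.
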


\begin{proof}
     Since $f$ is supported on $D$, and in particular vanishes on $\del \Omega$, we have for $t \in \R$:
    \begin{equation}
    \label{eq:rayleighconvexcomb}
    \begin{aligned}
    \CR_{\lambda_k,h}(1-tf) &= \frac{\int_{\Omega} t^2 h^n |\nabla f|^2 + \lambda_k h^{n-2} (1 - tf)^2 \dV}{\int_{\del \Omega} h^n \dA} \\
    &= \frac{\lambda_k}{|\del \Omega}| \int_\Omega h^{n-2} \dV + \frac{t^2\int_D (h^n |\nabla f|^2 + \lambda_k h^{n-2} f^2) \dV - 2 \lambda_k t \int_D h^{n-2} f \dV }{|\del \Omega|} \\
    \end{aligned}
    \end{equation}
    This expression is quadratic in $t$, its minimum is achieved when
    \begin{equation}
    t=t_0:=\frac{\lambda_k\int_D fh^{n-2}\,dV_{\Omega}}{\int_D (|df|^2h^n+\lambda_k f^2h^{n-2})\,dV_{\Omega}},
    \end{equation}
Whence $\sigma_k(M_h) \le \sigma_{\lambda_k,0}(h) \le \CR_{\lambda_k,h}(1 - t_0 f)$, so that inserting $t_0$ in \eqref{eq:rayleighconvexcomb} gives precisely \eqref{eq:boundwithdefect}.
\end{proof}    

\begin{proof}[Proof of Theorem \ref{thm:Stability}] Now the fibre $\Sigma$ has dimension $n=2$ so that $\int_{\partial \Omega} h^2 \dA = |\partial \Omega|$. Let $q \in \Omega$ and $r > 0$ be such that $B(q,r) \cap \del \Omega = \varnothing.$ Put $\delta := \tfrac{\lambda_k |\Omega|}{|\del \Omega|} - \sigma_k(M_h) > 0$. For $x \in B(q,r)$ define $f(x) = \dist(x,\del B(q,r))$ and extend it by $0$ outside. It is a direct computation to see that $|\nabla f| = 1$ almost everywhere, $f \le r$ and $f \ge r/2$ on $B(q,r/2)$. Rearranging Proposition \ref{prop:stablebody}, we read
\begin{equation}\label{ineq:withdeficit}
\delta \ge \frac{\lambda_k^2 r^2 |B(q,r/2)|^2}{4|\del \Omega| \int_{B(q,r)} h^2 + \lambda_k r^2 \dV}.
\end{equation}
Rearranging again, this reads as
\begin{equation}
    \int_{B(q,r)} h^2 \dV \ge \frac{\lambda_k^2 r^2}{4\delta} \frac{|B(q,r/2)|^2}{|\del \Omega|}  - \lambda_k r^2 |B(q,r)|.
\end{equation}
We deduce \eqref{eq:limstab} from the fact that every subdomain $D$ must contain some ball.
\end{proof}

\subsection{Unboundedness with respect to less restrictive \texorpdfstring{$\RL^p$}{Lp} norms.}

The main result of this section is Theorem \ref{thm:3dimconnected}, where the eigenvalues are unbounded in terms of an $\RL^p$ norm constraint for $1 \le p < n-2$ when $\Omega$ has connected boundary. We also show the contrasting Theorem \ref{dim basis 1}, where boundedness is preserved in a situation with two boundary components.

In order to prove Theorem \ref{thm:3dimconnected}, we recall the fact that for any warping function $h$, $\sigma_{1}(M_h) = \min\{\sigma_{0,1}(h),\sigma_{\lambda_1,0}(h)\}$. As such, we need to find a sequence of warping functions $h_m$ that is bounded in $\RL^p$ so that both of these quantities become unbounded as $m \to \infty$. 

To this end, consider $\Omega_m$ defined as in \eqref{eqdef:Omegam}, and for $R > 0$ and $r \in (0,1)$, define a sequence of smooth warping functions $h_m$ satisfying the following constraints:
\begin{equation}
    \label{eqdef:hmLp}
    \begin{cases}
    h_m = \left(\frac{Rm}{2 |\del \Omega|}\right)^{1/p} & \text{on } \Omega_{2m} \setminus \Omega_m, \\
    h_m \ge r & \text{on } \Omega \setminus \Omega_{2m}, \\
    \|h_m\|_{\RL^p(\Omega)} = R,  \\
    h_m \equiv 1 & \text{on } \del \Omega.
    \end{cases}
\end{equation}
\begin{lemma}
For $h_m$ defined as in \eqref{eqdef:hmLp}, $\sigma_{\lambda_1,0}(h_m) \to \infty$ as $m \to \infty$.
\end{lemma}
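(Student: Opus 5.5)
The natural approach is by contradiction, combined with a Poincaré-type estimate in the collar $\Omega\setminus\Omega_m$. Recall that $\sigma_{\lambda_1,0}(h_m)$ is the minimum of
\[
\CR_{\lambda_1,h_m}(f)=\frac{\int_\Omega h_m^n|\nabla f|^2+\lambda_1 h_m^{n-2}f^2\dV}{\int_{\del\Omega}f^2\dA},
\]
since $h_m\equiv1$ on $\del\Omega$. Fix $f$ with $\int_{\del\Omega}f^2\dA=1$. We aim to show $\CR_{\lambda_1,h_m}(f)\ge c\,\min\{H_m^{n-2},\,m\}\to\infty$, where $H_m:=(Rm/(2|\del\Omega|))^{1/p}\to\infty$. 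Nothing about the region $\Omega\setminus\Omega_{2m}$ is used except $h_m\ge r$.

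\textbf{Setup in Fermi coordinates.} Work in Fermi coordinates $(x',t)$ near $\del\Omega$, with $t=\dist(\cdot,\del\Omega)$. For $m$ large, $\Omega\setminus\Omega_m$ corresponds to $t\in[0,1/m)$. The annulus $A_m:=\Omega_{2m}\setminus\Omega_m$ is the region where $h_m=H_m$; I will use it as a thick band where the potential term is large. (The definitional conventions need care here: with $\Omega_m=\{t\ge 1/m\}$, the set $\Omega_{2m}\setminus\Omega_m$ is the band $1/(2m)\le t<1/m$, of width $1/(2m)$.)

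\textbf{Step 1: the band carries a definite share of $f^2$.} For $t\in[1/(2m),1/m]$, apply Lemma~\ref{lemma:deBrunopasproduit} with $N=\del\Omega$:
\[
\|f\|^2_{\RL^2(N_t)}\ge 1-Ct^{1/2}\bigl(1+\|\nabla f\|_{\RL^2(T_t)}\bigr)^2 .
\]
Split into two cases.
\begin{itemize}
\item[(a)] If $\|\nabla f\|^2_{\RL^2(T_{1/m})}\ge m^{1/4}$, then, since $h_m\ge r$ on $T_{1/(2m)}$ and $h_m=H_m\ge 1$ on the band, the gradient term gives $\CR\ge \min(r^n,1)\,m^{1/4}\to\infty$.
\item[(b)] Otherwise $Ct^{1/2}(1+m^{1/8})^2\lesssim m^{-1/2}m^{1/4}\to0$. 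Hence $\|f\|^2_{\RL^2(N_t)}\ge 1/2$ for all $t$ in the band, and integrating over $t$ gives $\int_{A_m}f^2\dV\gtrsim 1/(2m)$.
\end{itemize}

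\textbf{Step 2: the potential term dominates.} In case (b),
\[
\CR\ge\lambda_1 H_m^{n-2}\int_{A_m}f^2\dV\gtrsim \lambda_1 H_m^{n-2}m^{-1}\asymp m^{\frac{n-2}{p}-1}.
\]
This tends to infinity exactly when $p<n-2$, which is the hypothesis of Theorem~\ref{thm:3dimconnected}. This step is the heart of the argument and is where the exponent condition enters. Combining the two cases, $\CR\ge c\min\{m^{1/4},m^{(n-2)/p-1}\}$ uniformly in $f$. Therefore $\sigma_{\lambda_1,0}(h_m)\to\infty$.

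\textbf{Main obstacle.} The delicate point is the balance in Step 1. The error in Lemma~\ref{lemma:deBrunopasproduit} is of size $t^{1/2}\|\nabla f\|$ with $t\sim 1/m$, and the threshold for the gradient must be chosen so that both alternatives diverge. With $m^{\beta}$ as the threshold one needs $0<\beta<1$; the gradient alternative gives $m^{\beta}$, while the potential alternative gives $m^{(n-2)/p-1}$ independently of $\beta$, so any such $\beta$ works. I would also double-check that the constant $C$ in the lemma is uniform (it depends only on the geometry near $\del\Omega$). Connectedness of $\del\Omega$ is not needed for this lemma; it presumably matters only for $\sigma_{0,1}$.
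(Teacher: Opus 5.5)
Your proof is correct and follows the paper's argument. Both use Lemma~\ref{lemma:deBrunopasproduit} on the collar $\Omega\setminus\Omega_m$ to play the gradient term (with $h_m\ge r$) against the potential term on the band $\Omega_{2m}\setminus\Omega_m$, which gives growth $m^{(n-2-p)/p}$. The only difference is that the paper splits on whether the band mass is at least $(2m)^{-1}$, while you split on the collar gradient first; this is the contrapositive arrangement, and it gives the band bound $1/(4m)$ cleanly, whereas the paper's threshold would need to be $(4m)^{-1}$ to guarantee a slice of mass below $1/2$.

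One correction to your closing remark: with threshold $m^\beta$ the error in the lemma is of order $m^{\beta-1/2}$, so you need $0<\beta<1/2$ rather than $0<\beta<1$. Your actual choice $\beta=1/4$ is fine.
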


\begin{proof}
    Let $f_m$ be an eigenfunction corresponding to $\sigma_{\lambda_1,0}(h_m)$, normalised to $\|f_m\|_{\RL^2(\del \Omega)} = 1$. We consider two cases separately, recalling that $(\del \Omega)_t$ is the set at distance $t$ from the boundary.
    
    \noindent \textbf{First case:} If, $\int_{\Omega_{2m} \setminus \Omega_{m}} f_m^2 \dV \ge (2m)^{-1}$, then,  because $p < n- 2$ and $\lambda_1 > 0$, we have
    \begin{equation}
        \int_{\Omega_{2m} \setminus \Omega_{m}} \lambda_1 f_m^2 h_m^{n-2} \ge \frac{\lambda_1}{2m} h_m^{n-2} = \frac{\lambda_1}{2}\left(\frac{R}{2|\del \Omega|}\right)^{\frac{n-2}{p}} m^{\frac{n-2 - p}{p}} \to \infty
    \end{equation}
    as $m \to \infty$.

    \noindent \textbf{Second case:} In the complementary case, there has to be some $t \in ( (2m)^{-1},m^{-1})$ so that 
    $\int_{(\del \Omega)_t} f_m^2 \de A_{(\del \Omega)_t} < 1/2$. Using the normalisation $\|f_m\|_{\RL^2(\del \Omega)} = 1$ and Lemma \ref{lemma:deBrunopasproduit}, we have that
    \begin{equation}
        \frac 1 2 < \abs{1 - \int_{(\del \Omega)_t} f_m^2 \de A_{(\del \Omega)_t}} \lesssim t^{1/2}(1 + \|\nabla f_m\|_{\RL^2(\Omega \setminus \Omega_m)})^2 \lesssim m^{-1/2} (1  + \|\nabla f_m\|_{\RL^2(\Omega \setminus \Omega_m)})^2.
    \end{equation}
    In particular, this implies that
    \begin{equation}
        \|\nabla f_m\|_{\RL^2(\Omega \setminus \Omega_m)}^2 \gtrsim m^{1/2},
    \end{equation}
    so that
    \begin{equation}
        \int_{\Omega \setminus \Omega_m} h_m^n|\nabla f_m|^2\dV \gtrsim r^n m^{1/2} \xrightarrow{m \to \infty} \infty.
    \end{equation}
\end{proof}

\begin{lemma}
    For $h_m$ defined in \eqref{eqdef:hmLp}, $\sigma_{0,1}(h_m) \to \infty$ as $m \to \infty$.
\end{lemma}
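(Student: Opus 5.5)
We argue by contradiction: suppose that along a subsequence $\sigma_{0,1}(h_m)\le K$ for some $K>0$. Let $f_m$ be an eigenfunction for $\sigma_{0,1}(h_m)$. Since $h_m\equiv 1$ on $\del\Omega$, we may normalise it so that $\|f_m\|_{\RL^2(\del\Omega)}=1$ and $\int_{\del\Omega} f_m\dA=0$, the latter being orthogonality to the constant eigenfunction for $\sigma_{0,0}(h_m)=0$. The Rayleigh quotient \eqref{RayleighQuotient} with $\lambda=0$ then gives $\int_\Omega h_m^n|\nabla f_m|^2\dV\le K$. By the constraints \eqref{eqdef:hmLp}, this yields two gradient bounds:
\begin{equation}
\int_{\Omega\setminus\Omega_{2m}}|\nabla f_m|^2\dV\le K r^{-n},\qquad \int_{\Omega_{2m}\setminus\Omega_m}|\nabla f_m|^2\dV\lesssim K m^{-n/p}.
\end{equation}

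\textbf{Step 1: transport to a parallel hypersurface.} For $t\in((2m)^{-1},m^{-1})$, the hypersurface $(\del\Omega)_t$ lies in the collar $\Omega_{2m}\setminus\Omega_m$, and $T_t\subset\Omega\setminus\Omega_m$. Combining the two gradient bounds gives $\|\nabla f_m\|_{\RL^2(T_t)}\lesssim 1$. Lemma~\ref{lemma:deBrunopasproduit}, in its boundary version, then gives $\big|\|f_m\|^2_{\RL^2((\del\Omega)_t)}-1\big|\lesssim m^{-1/2}$. Applying its polarised form to the pair $f_m$ and $\tilde f\equiv 1$ (which has zero gradient) gives $\big|\int_{(\del\Omega)_t} f_m\big|\lesssim m^{-1/2}$. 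Finally, $|(\del\Omega)_t|=|\del\Omega|+O(m^{-1})$. So on every such level set, $f_m$ has $\RL^2$ norm close to $1$ and mean close to $0$.

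\textbf{Step 2: small tangential energy on some level.} Using Fermi coordinates, we write
\begin{equation}
\int_{(2m)^{-1}}^{m^{-1}}\int_{(\del\Omega)_t}|\nabla_{T} f_m|^2\,\de A\,\de t\lesssim \int_{\Omega_{2m}\setminus\Omega_m}|\nabla f_m|^2\dV\lesssim m^{-n/p}.
\end{equation}
Since the $t$-interval has length $(2m)^{-1}$, the averaging argument gives some $t_m$ in that range with
\begin{equation}
\int_{(\del\Omega)_{t_m}}|\nabla_T f_m|^2\,\de A\lesssim m^{1-n/p}.
\end{equation}
Because $p<n-2<n$, this tends to $0$.

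\textbf{Step 3: Poincaré on the level set and conclusion.} This is where the connectedness of $\del\Omega$ enters. Each $(\del\Omega)_t$ is diffeomorphic to $\del\Omega$ via $\phi_t$, with metric $\phi_t^*g=g_{\del\Omega}+O(t)$. Hence the Poincaré--Wirtinger inequality on the connected hypersurfaces $(\del\Omega)_{t}$ holds with a constant uniform for small $t$, namely a multiple of $\sigma$-independent $\lambda_1(\del\Omega)^{-1}$. Applying it on $(\del\Omega)_{t_m}$ gives
\begin{equation}
\|f_m\|^2_{\RL^2((\del\Omega)_{t_m})}-\frac{\big(\int_{(\del\Omega)_{t_m}} f_m\big)^2}{|(\del\Omega)_{t_m}|}\lesssim m^{1-n/p}\to 0.
\end{equation}
By Step 1, the left-hand side tends to $1$, which is a contradiction. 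Hence $\sigma_{0,1}(h_m)\to\infty$.

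The main obstacle is Step 3: making the uniformity of the Poincaré constant precise, and checking the bookkeeping of Lemma~\ref{lemma:deBrunopasproduit}, in particular that $T_{t}$ only sees the region where $h_m\ge r$ together with the collar. Without connectedness, $f_m$ could be a different constant on each boundary component, which is exactly the mechanism behind Theorem~\ref{dim basis 1}.
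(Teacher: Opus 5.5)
Your proof is correct and follows essentially the same route as the paper. The two gradient bounds you extract from $\sigma_{0,1}(h_m)\le K$ are exactly the negations of the paper's first two cases. You then use Lemma~\ref{lemma:deBrunopasproduit} (with $\tilde f\equiv 1$) and the uniform Poincar\'e--Wirtinger inequality on the connected parallel hypersurfaces $(\del\Omega)_t$, as in the paper's third case.

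The only difference is packaging. You argue by contradiction and select a single good level $t_m$ by averaging. The paper instead argues directly, integrating the Poincar\'e lower bound over all $t\in((2m)^{-1},m^{-1})$ to get $\sigma_{0,1}(h_m)\gtrsim m^{n/p-1}$. Your version also explicitly subtracts the mean before applying Poincar\'e, which the paper glosses over.
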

\begin{proof}
    Let $f$ be a normalised eigenfunction corresponding to $\sigma_{0,1}$, which is in particular orthogonal to constants on $\del \Omega$, so that we have
    \begin{equation}
        \int_{\del \Omega} f^2 \dA = 1, \qquad \int_{\del \Omega} f \dA = 0.
    \end{equation}
    This time we consider three cases.

    \noindent \textbf{First case:} Suppose that $\int_{\Omega_{2m} \setminus \Omega_m} |\nabla f|^2 \dV \ge 1$. Then,
    \begin{equation}
        \sigma_{0,1}(h_m) \ge \int_{\Omega_{2m} \setminus \Omega_m} h_m^n |\nabla f|^2 \dV \ge \left(\frac{Rm}{2 |\del \Omega|} \right)^{n/p} \xrightarrow{m \to \infty} \infty.
    \end{equation}
\vspace{2ex}

    \noindent \textbf{Second case: } Suppose that $\int_{\Omega \setminus \Omega_{2m}} |\nabla f|^2 \dV \ge m^{1/4}$. Then, since $h_m \ge r$ on $\Omega \setminus \Omega_{2m}$, we have 
    \begin{equation}
        \sigma_{0,1}(h_m) \ge \int_{\Omega \setminus \Omega_{2m}} h_m^n |\nabla f|^2 \dV \ge r^n m^{1/4} \xrightarrow{m \to \infty} \infty.
    \end{equation}
    \vspace{2ex} 

    \noindent \textbf{Third case:} We now assume both
    \begin{equation}
         \int_{\Omega \setminus \Omega_{2m}} |\nabla f|^2 \dV \le m^{1/4} \qquad \text{and} \qquad \int_{\Omega_{2m} \setminus \Omega_m} |\nabla f|^2 \dV \le 1
    \end{equation}
    Recalling the eigenfunction normalisation in $\RL^2(\del \Omega)$, for each $t \in ( (2m)^{-1}, m^{-1})$ we have by Lemma \ref{lemma:deBrunopasproduit} that
    \begin{equation}
        \begin{aligned}
            \abs{1 - \|f\|_{\RL^2((\del \Omega)_t})}^2 \lesssim t^{1/2}(1 + \|\nabla f\|_{\RL^2(\Omega \setminus \Omega_m)})^2  \lesssim m^{-1/4}.
        \end{aligned}
    \end{equation}
    Applying the inner product bound in Lemma \ref{lemma:deBrunopasproduit} with $\tilde f = 1$ and using the fact that $f$ is orthogonal to constants on $\del \Omega$ then gives
    \begin{equation}
        \begin{aligned}
            \abs{\int_{(\del \Omega)_t} f \de A_{\del \Omega_t}} \lesssim t^{1/2} (1 + |\del \Omega| +  \|\nabla f\|_{\RL^2(\Omega \setminus \Omega_m)})^2 \lesssim m^{-1/4}.
        \end{aligned}
    \end{equation}
    As in Section \ref{sec:teclem}, denoting by $\phi_t : \del \Omega \to (\del \Omega)_t$ the diffeomorphism obtained by following the normal geodesics, we have that $\phi_t^* g_{(\del \Omega)_t} = g_{\del \Omega} + \smallo t$, so that $\lambda_1((\del \Omega)_t) = \lambda_1(\del \Omega) + \smallo 1$, and $\lambda_1(\del \Omega) > 0$ as it is connected. Thus, we use $f\big|_{(\del \Omega)_t}$ as a trial function for $\lambda_1((\del \Omega)_t)$ to see
    \begin{equation}
        \int_{(\del \Omega)_t} |\nabla f|^2 \de A_{(\del \Omega)_t} \ge (\lambda_1(\del \Omega) + \smallo 1) \int_{(\del \Omega)_t} f^2 \de A_{(\del \Omega)_t},
    \end{equation}
    which in turn gives
    \begin{equation}
\int_{\Omega_{2m} \setminus \Omega_m} |\nabla f|^2 \dV \ge (\lambda_1(\del \Omega) + \smallo 1) \int_{\Omega_{2m} \setminus \Omega_m} f^2 \dV \gtrsim m^{-1},
    \end{equation}
    from which we deduce
    \begin{equation}
    \int_{\Omega_{2m} \setminus \Omega_m} h_m^n |\nabla f|^2 \dV \gtrsim m^{\frac n p - 1} \xrightarrow{m \to \infty} \infty,
    \end{equation}
    since $p \leq n-2$, completing the proof.
  
\end{proof}

Finally, we prove that in the case where the base $\Omega$ is one-dimensional and so does not have connected boundary,
the first eigenvalue is bounded in terms of the $\RL^p$ norm of $h$ for all $p \ge 1$.

\begin{proof}[Proof of Theorem \ref{dim basis 1}.]

We put
\begin{equation}
A = \Big\{x \in [0,L] : h(x) < \Big(\frac{n}{(n-1) L}\Big)^{1/p} \|h\|_{\RL^p} \Big\}.
\end{equation}
Chebyshev's inequality tells us that $|[0,L] \setminus A| \le \frac{(n-1)L}{n}$, and in particular $A$ is an open set of length at least $\frac{L}{n}$. Consider the continuous trial function $f$ such that
\begin{equation}
    \begin{cases}
        f(0) = -1 & f(L) = 1 \\
        f'(x) = 0 & \text{for } x \in [0,L] \setminus A; \\
        f'(x) = \frac{2}{|A|} & \text{for } x \in A.
    \end{cases}
\end{equation}
It increases with slope $f'(x) = \tfrac{2}{|A|}$ on $A$, and is constant everywhere else. We can therefore bound
\begin{equation}
  \sigma_1(M_h) \le \frac 1 2 \int_0^L h^n (f')^2 \de x \le \int_A \frac{n^{n/p}\|h\|_{\RL^p}^n}{((n-1)L)^{n/p}}\frac{2}{|A|^2} \de x \le 2 n \left(\frac{n}{n-1}\right)^{\frac{n}{p}}{L^{-\frac{n+p}{p}}} \|h\|_{\RL^p}^n.
\end{equation}
We are only left with observing that
\begin{equation}
    \left(\frac{n}{n-1}\right)^{\frac{n}{p}} \le \left(\frac{n}{n-1}\right)^{n} \le 4
\end{equation}
to obtain the desired inequality
\begin{equation}
    \sigma_1(M_h) \le  \frac{8 n}{L^{\frac{n + p}{p}}}\|h\|_{\RL^p}^n.
\end{equation}

\end{proof}

% \section*{Open access statement}
% For the purpose of open access, the authors have applied a Creative Commons Attribution (CC BY) licence to any Author Accepted Manuscript version arising from this submission.

% \subsection*{Competing Interests}
% None.

% \subsection*{Data availability statement}
% The authors declare that the data supporting the findings of this study are available within the paper. We are not using external data.

\bibliographystyle{plain}              
\bibliography{bibliography}

@incollection {BucFreKen2017,
    AUTHOR = {Bucur, Dorin and Freitas, Pedro and Kennedy, James},
     TITLE = {The {R}obin problem},
 BOOKTITLE = {Shape optimization and spectral theory},
     PAGES = {78--119},
 PUBLISHER = {De Gruyter Open, Warsaw},
      YEAR = {2017},
      ISBN = {978-3-11-055088-7; 978-3-11-055085-6},
   MRCLASS = {58J32 (58J50)},
  MRNUMBER = {3681148},
MRREVIEWER = {Juan\ Manuel\ P\'erez-Pardo},
       DOI = {10.1515/9783110550887-004},
       URL = {https://doi-org.acces.bibl.ulaval.ca/10.1515/9783110550887-004},
}

@article {Lo2025,
    AUTHOR = {Lo, Chia-Chun},
     TITLE = {Homogenisation for the {R}obin eigenvalue problem on manifolds
              and flexibility of optimal {S}chr\"odinger potentials},
   JOURNAL = {J. Geom. Anal.},
  FJOURNAL = {Journal of Geometric Analysis},
    VOLUME = {35},
      YEAR = {2025},
    NUMBER = {11},
     PAGES = {Paper No. 364, 40},
      ISSN = {1050-6926,1559-002X},
   MRCLASS = {35P15 (35B27 47A75 58C40)},
  MRNUMBER = {4961407},
       DOI = {10.1007/s12220-025-02188-w},
       URL = {https://doi.org/10.1007/s12220-025-02188-w},
}

@article {GiPoly2024,
    AUTHOR = {Girouard, Alexandre and Polymerakis, Panagiotis},
     TITLE = {Large {S}teklov {E}igenvalues {U}nder {V}olume {C}onstraints},
   JOURNAL = {J. Geom. Anal.},
  FJOURNAL = {Journal of Geometric Analysis},
    VOLUME = {34},
      YEAR = {2024},
    NUMBER = {10},
     PAGES = {Paper No. 316},
      ISSN = {1050-6926,1559-002X},
   MRCLASS = {58C40 (58J05)},
  MRNUMBER = {4788000},
       DOI = {10.1007/s12220-024-01768-6},
       URL = {https://doi.org/10.1007/s12220-024-01768-6},
}

@article {CoGiGoSh2024,
    AUTHOR = {Colbois, Bruno and Girouard, Alexandre and Gordon, Carolyn and
              Sher, David},
     TITLE = {Some recent developments on the {S}teklov eigenvalue problem},
   JOURNAL = {Rev. Mat. Complut.},
  FJOURNAL = {Revista Matem\'atica Complutense},
    VOLUME = {37},
      YEAR = {2024},
    NUMBER = {1},
     PAGES = {1--161},
      ISSN = {1139-1138,1988-2807},
   MRCLASS = {58C40 (35P05 35P15 35P20 53A10 58J50)},
  MRNUMBER = {4695859},
       DOI = {10.1007/s13163-023-00480-3},
       URL = {https://doi.org/10.1007/s13163-023-00480-3},
}

@book {LeMaPo2023,
    AUTHOR = {Levitin, Michael and Mangoubi, Dan and Polterovich, Iosif},
     TITLE = {Topics in spectral geometry},
    SERIES = {Graduate Studies in Mathematics},
    VOLUME = {237},
 PUBLISHER = {American Mathematical Society, Providence, RI},
      YEAR = {[2023] \copyright2023},
     PAGES = {xviii+325},
      ISBN = {[9781470475253]; [9781470475482]; [9781470475499]},
   MRCLASS = {35-01 (35Pxx 47A75 58J50 58J53 65N30)},
  MRNUMBER = {4655924},
       DOI = {10.1090/gsm/237},
       URL = {https://doi.org/10.1090/gsm/237},
}

@article{BrCoGi2024, title={Spectral ratios and gaps for {S}teklov eigenvalues of balls with revolution-type metrics}, volume={68}, DOI={10.4153/S0008439524000778}, number={2}, journal={Canadian Mathematical Bulletin}, author={Brisson, Jade and Colbois, Bruno and Gittins, Katie}, year={2025}, pages={492–511}}

@article {RaTa1975,
    AUTHOR = {Rauch, Jeffrey and Taylor, Michael},
     TITLE = {Potential and scattering theory on wildly perturbed domains},
   JOURNAL = {J. Functional Analysis},
  FJOURNAL = {Journal of Functional Analysis},
    VOLUME = {18},
      YEAR = {1975},
     PAGES = {27--59},
      ISSN = {0022-1236},
   MRCLASS = {35P25 (47A40)},
  MRNUMBER = {377303},
MRREVIEWER = {Jean-Claude Guillot},
       DOI = {10.1016/0022-1236(75)90028-2},
       URL = {https://doi-org.acces.bibl.ulaval.ca/10.1016/0022-1236(75)90028-2},
}

@article {KaLa2022,
    AUTHOR = {Karpukhin, Mikhail and Lagac\'e, Jean},
     TITLE = {Flexibility of {S}teklov eigenvalues via boundary
              homogenisation},
   JOURNAL = {Ann. Math. Qu\'e.},
  FJOURNAL = {Annales Math\'ematiques du Qu\'ebec},
    VOLUME = {48},
      YEAR = {2024},
    NUMBER = {1},
     PAGES = {175--186},
      ISSN = {2195-4755,2195-4763},
   MRCLASS = {58J50 (35B27 35J20 35P15)},
  MRNUMBER = {4718574},
MRREVIEWER = {Feng\ Du},
       DOI = {10.1007/s40316-022-00207-8},
       URL = {https://doi-org.acces.bibl.ulaval.ca/10.1007/s40316-022-00207-8},
}

@article {Xi2022,
    AUTHOR = {Xiong, Changwei},
     TITLE = {On the spectra of three {S}teklov eigenvalue problems on
              warped product manifolds},
   JOURNAL = {J. Geom. Anal.},
  FJOURNAL = {Journal of Geometric Analysis},
    VOLUME = {32},
      YEAR = {2022},
    NUMBER = {5},
     PAGES = {Paper No. 153, 35},
      ISSN = {1050-6926},
   MRCLASS = {58C40 (35P15)},
  MRNUMBER = {4386421},
       DOI = {10.1007/s12220-022-00889-0},
       URL = {https://doi.org/10.1007/s12220-022-00889-0},
}

@article {CiGi2018,
    AUTHOR = {Cianci, Donato and Girouard, Alexandre},
     TITLE = {Large spectral gaps for {S}teklov eigenvalues under volume
              constraints and under localized conformal deformations},
   JOURNAL = {Ann. Global Anal. Geom.},
  FJOURNAL = {Annals of Global Analysis and Geometry},
    VOLUME = {54},
      YEAR = {2018},
    NUMBER = {4},
     PAGES = {529--539},
      ISSN = {0232-704X},
   MRCLASS = {58J50 (53C20 53C21)},
  MRNUMBER = {3878841},
MRREVIEWER = {Ahmed Lesfari},
       DOI = {10.1007/s10455-018-9612-6},
       URL = {https://doi.org/10.1007/s10455-018-9612-6},
}

@article {CoElGi2019,
    AUTHOR = {Colbois, Bruno and El Soufi, Ahmad and Girouard, Alexandre},
     TITLE = {Compact manifolds with fixed boundary and large {S}teklov
              eigenvalues},
   JOURNAL = {Proc. Amer. Math. Soc.},
  FJOURNAL = {Proceedings of the American Mathematical Society},
    VOLUME = {147},
      YEAR = {2019},
    NUMBER = {9},
     PAGES = {3813--3827},
      ISSN = {0002-9939},
   MRCLASS = {58J50 (35P15 53C20 53C21 53C23)},
  MRNUMBER = {3993774},
MRREVIEWER = {Julian Edward},
       DOI = {10.1090/proc/14426},
       URL = {https://doi.org/10.1090/proc/14426},
}

@article {FrSc2016,
    AUTHOR = {Fraser, Ailana and Schoen, Richard},
     TITLE = {Sharp eigenvalue bounds and minimal surfaces in the ball},
   JOURNAL = {Invent. Math.},
  FJOURNAL = {Inventiones Mathematicae},
    VOLUME = {203},
      YEAR = {2016},
    NUMBER = {3},
     PAGES = {823--890},
      ISSN = {0020-9910},
   MRCLASS = {58C40 (35P15 53A10)},
  MRNUMBER = {3461367},
MRREVIEWER = {Isabel M. C. Salavessa},
       DOI = {10.1007/s00222-015-0604-x},
       URL = {https://doi-org.acces.bibl.ulaval.ca/10.1007/s00222-015-0604-x},
}

@article {CoGiGi2019,
    AUTHOR = {Colbois, Bruno and Girouard, Alexandre and Gittins, Katie},
     TITLE = {Steklov eigenvalues of submanifolds with prescribed boundary
              in {E}uclidean space},
   JOURNAL = {J. Geom. Anal.},
  FJOURNAL = {Journal of Geometric Analysis},
    VOLUME = {29},
      YEAR = {2019},
    NUMBER = {2},
     PAGES = {1811--1834},
      ISSN = {1050-6926},
   MRCLASS = {58C40 (35P15 35R01 58J50)},
  MRNUMBER = {3935280},
MRREVIEWER = {Julian Edward},
       DOI = {10.1007/s12220-018-0063-x},
       URL = {https://doi.org/10.1007/s12220-018-0063-x},
}

@article {CoVe2021,
    AUTHOR = {Colbois, Bruno and Verma, Sheela},
     TITLE = {Sharp {S}teklov upper bound for submanifolds of revolution},
   JOURNAL = {J. Geom. Anal.},
  FJOURNAL = {Journal of Geometric Analysis},
    VOLUME = {31},
      YEAR = {2021},
    NUMBER = {11},
     PAGES = {11214--11225},
      ISSN = {1050-6926},
   MRCLASS = {58C40 (35P15)},
  MRNUMBER = {4310168},
       DOI = {10.1007/s12220-021-00678-1},
       URL = {https://doi.org/10.1007/s12220-021-00678-1},
}

@article {DaKaNi2021,
    AUTHOR = {Daud\'{e}, Thierry and Kamran, Niky and Nicoleau, Fran\c{c}ois},
     TITLE = {Stability in the inverse {S}teklov problem on warped product
              {R}iemannian manifolds},
   JOURNAL = {J. Geom. Anal.},
  FJOURNAL = {Journal of Geometric Analysis},
    VOLUME = {31},
      YEAR = {2021},
    NUMBER = {2},
     PAGES = {1821--1854},
      ISSN = {1050-6926},
   MRCLASS = {35P25 (58J50 81U40)},
  MRNUMBER = {4215278},
       DOI = {10.1007/s12220-019-00326-9},
       URL = {https://doi.org/10.1007/s12220-019-00326-9},
}

@article {DaHeNi2021,
    AUTHOR = {Daud\'e, Thierry and Helffer, Bernard and Nicoleau, Fran\c{c}ois},
     TITLE = {Exponential localization of {S}teklov eigenfunctions on warped
              product manifolds: the flea on the elephant phenomenon},
   JOURNAL = {Ann. Math. Qu\'e.},
  FJOURNAL = {Annales Math\'ematiques du Qu\'ebec},
    VOLUME = {47},
      YEAR = {2023},
    NUMBER = {2},
     PAGES = {295--330},
      ISSN = {2195-4755,2195-4763},
   MRCLASS = {35P15 (58J50)},
  MRNUMBER = {4645693},
MRREVIEWER = {Ayman\ Kachmar},
       DOI = {10.1007/s40316-021-00185-3},
       URL = {https://doi.org/10.1007/s40316-021-00185-3},
}

@article {GiPo2017,
    AUTHOR = {Girouard, Alexandre and Polterovich, Iosif},
     TITLE = {Spectral geometry of the {S}teklov problem (survey article)},
   JOURNAL = {J. Spectr. Theory},
  FJOURNAL = {Journal of Spectral Theory},
    VOLUME = {7},
      YEAR = {2017},
    NUMBER = {2},
     PAGES = {321--359},
      ISSN = {1664-039X},
   MRCLASS = {58J50 (35J25 35R01)},
  MRNUMBER = {3662010},
       URL = {https://doi.org/10.4171/JST/164},
}

@article {GiHeLa2021,
    AUTHOR = {Girouard, Alexandre and Henrot, Antoine and Lagac\'{e}, Jean},
     TITLE = {From {S}teklov to {N}eumann via homogenisation},
   JOURNAL = {Arch. Ration. Mech. Anal.},
  FJOURNAL = {Archive for Rational Mechanics and Analysis},
    VOLUME = {239},
      YEAR = {2021},
    NUMBER = {2},
     PAGES = {981--1023},
      ISSN = {0003-9527},
   MRCLASS = {35J15 (47A75)},
  MRNUMBER = {4201620},
MRREVIEWER = {Zakaria El Allali},
       DOI = {10.1007/s00205-020-01588-2},
       URL = {https://doi-org.acces.bibl.ulaval.ca/10.1007/s00205-020-01588-2},
}

@article {GiLa2021,
    AUTHOR = {Girouard, Alexandre and Lagac\'{e}, Jean},
     TITLE = {Large {S}teklov eigenvalues via homogenisation on manifolds},
   JOURNAL = {Invent. Math.},
  FJOURNAL = {Inventiones Mathematicae},
    VOLUME = {226},
      YEAR = {2021},
    NUMBER = {3},
     PAGES = {1011--1056},
      ISSN = {0020-9910},
   MRCLASS = {58C40 (35P30)},
  MRNUMBER = {4337975},
       DOI = {10.1007/s00222-021-01058-w},
       URL = {https://doi-org.acces.bibl.ulaval.ca/10.1007/s00222-021-01058-w},
}

@article {GiKaLa2021,
    AUTHOR = {Girouard, Alexandre and Karpukhin, Mikhail and Lagac\'{e}, Jean},
     TITLE = {Continuity of eigenvalues and shape optimisation for {L}aplace
              and {S}teklov problems},
   JOURNAL = {Geom. Funct. Anal.},
  FJOURNAL = {Geometric and Functional Analysis},
    VOLUME = {31},
      YEAR = {2021},
    NUMBER = {3},
     PAGES = {513--561},
      ISSN = {1016-443X},
   MRCLASS = {49R05 (35P05 46E35 49Q10)},
  MRNUMBER = {4311579},
       DOI = {10.1007/s00039-021-00573-5},
       URL = {https://doi-org.acces.bibl.ulaval.ca/10.1007/s00039-021-00573-5},
}

@article {FrSc2011,
    AUTHOR = {Fraser, Ailana and Schoen, Richard},
     TITLE = {The first {S}teklov eigenvalue, conformal geometry, and
              minimal surfaces},
   JOURNAL = {Adv. Math.},
  FJOURNAL = {Advances in Mathematics},
    VOLUME = {226},
      YEAR = {2011},
    NUMBER = {5},
     PAGES = {4011--4030},
      ISSN = {0001-8708},
   MRCLASS = {58J32 (53A10 58J50)},
  MRNUMBER = {2770439},
MRREVIEWER = {Ahmad El Soufi},
       DOI = {10.1016/j.aim.2010.11.007},
       URL = {https://doi.org/10.1016/j.aim.2010.11.007},
}

@article {FaTaYu2015,
    AUTHOR = {Fan, Xu-Qian and Tam, Luen-Fai and Yu, Chengjie},
     TITLE = {Extremal problems for {S}teklov eigenvalues on annuli},
   JOURNAL = {Calc. Var. Partial Differential Equations},
  FJOURNAL = {Calculus of Variations and Partial Differential Equations},
    VOLUME = {54},
      YEAR = {2015},
    NUMBER = {1},
     PAGES = {1043--1059},
      ISSN = {0944-2669},
   MRCLASS = {58C40 (35P15 53A10)},
  MRNUMBER = {3385191},
MRREVIEWER = {Asma Hassannezhad},
       DOI = {10.1007/s00526-014-0816-8},
       URL = {https://doi.org/10.1007/s00526-014-0816-8},
}

@article {Xi2021,
    AUTHOR = {Xiong, Changwei},
     TITLE = {Optimal estimates for {S}teklov eigenvalue gaps and ratios on
              warped product manifolds},
   JOURNAL = {Int. Math. Res. Not. IMRN},
  FJOURNAL = {International Mathematics Research Notices. IMRN},
      YEAR = {2021},
      VOLUME = {2021},
    NUMBER = {22},
     PAGES = {16938--16962},
      ISSN = {1073-7928},
   MRCLASS = {58C40 (35R30)},
  MRNUMBER = {4345819},
       DOI = {10.1093/imrn/rnz258},
       URL = {https://doi.org/10.1093/imrn/rnz258},
}

@article {Tsc24,
    AUTHOR = {Tschanz, L\'eonard},
     TITLE = {Sharp upper bounds for {S}teklov eigenvalues of a hypersurface
              of revolution with two boundary components in {E}uclidean
              space},
   JOURNAL = {Ann. Math. Qu\'e.},
  FJOURNAL = {Annales Math\'ematiques du Qu\'ebec},
    VOLUME = {48},
      YEAR = {2024},
    NUMBER = {2},
     PAGES = {489--518},
      ISSN = {2195-4755,2195-4763},
   MRCLASS = {58J50},
  MRNUMBER = {4798686},
MRREVIEWER = {Panagiotis\ Polymerakis},
       DOI = {10.1007/s40316-024-00225-8},
       URL = {https://doi.org/10.1007/s40316-024-00225-8},
}

@article {Sel25,
    AUTHOR = {Seliutskii, Denis},
     TITLE = {Upper {B}ounds for {S}teklov {E}igenvalues of a {H}ypersurface
              of {R}evolution},
   JOURNAL = {Russ. J. Math. Phys.},
  FJOURNAL = {Russian Journal of Mathematical Physics},
    VOLUME = {32},
      YEAR = {2025},
    NUMBER = {2},
     PAGES = {399--407},
      ISSN = {1061-9208,1555-6638},
   MRCLASS = {35P15},
  MRNUMBER = {4939345},
       DOI = {10.1134/S1061920824601605},
       URL = {https://doi.org/10.1134/S1061920824601605},
}

@article {Ej79,
    AUTHOR = {Ejiri, Norio},
     TITLE = {{A} construction of non-flat, compact irreducible {R}iemannian manifolds which are isospectral but not isometric},
   JOURNAL = {Math. Z.},
  FJOURNAL = {Mathematische Zeitschrift},
    VOLUME = {168},
      YEAR = {1979},
    NUMBER = {2},
     PAGES = {207--212},
      ISSN = {},
   MRCLASS = {},
  MRNUMBER = {},
       DOI = {10.1007/BF01214512},
       URL = {https://doi.org/10.1007/BF01214512},
}

@article {Tsu80,
    AUTHOR = {Tsukada, Kazumi},
     TITLE = {Eigenvalues of the {L}aplacian of warped product},
   JOURNAL = {Tokyo J. Math.},
  FJOURNAL = {Tokyo Journal of Mathematics},
    VOLUME = {3},
      YEAR = {1980},
    NUMBER = {1},
     PAGES = {131--136},
      ISSN = {0387-3870},
   MRCLASS = {58G25 (53C20)},
  MRNUMBER = {584550},
MRREVIEWER = {Th.\ Friedrich},
       DOI = {10.3836/tjm/1270216086},
       URL = {https://doi.org/10.3836/tjm/1270216086},
}
\end{document}